\newtheorem{theo}{Theorem}[section]
\newtheorem{lemma}[theo]{Lemma}
\newtheorem{con}[theo]{Conjecture}
\newtheorem{prop}[theo]{Proposition}
\newtheorem{fact}[theo]{Fact}
\newtheorem{defi}[theo]{Definition}
\newtheorem{algorithm}[theo]{Algorithem}
\newcommand{\qed}{\hspace*{\fill} \rule{7pt}{7pt}}
\def\endproofbox{\hskip 1.3em\hfill\rule{6pt}{6pt}}
\newenvironment{proof}%
{%
\noindent{\it Proof.}
}%
{%
 \quad\hfill\endproofbox\vspace*{2ex}
}
\def\qed{\hskip 1.3em\hfill\rule{6pt}{6pt}}
\begin{document}
\title{Lagrangian densities of short $3$-uniform linear paths and Tur\'an numbers of their extensions}
    \author{Biao Wu \thanks{MOE-LCSM, School of Mathematics and Statistics, Hunan Normal University, Changsha, Hunan 410081, P. R. China. Email: wu@hunnu.edu.cn. \ Supported by the Construct Program of the Key Discipline in Hunan Province.}  \and  Yuejian Peng \thanks{Corresponding author. Institute of Mathematics, Hunan University, Changsha, 410082, P.R. China. Email: ypeng1@hnu.edu.cn. \ Supported in part by National Natural Science Foundation of China (No. 11671124).}
}

\date{}
\maketitle
\begin{abstract}
For a fixed positive integer $n$ and an $r$-uniform hypergraph $H$, the Tur\'an number $ex(n,H)$ is the maximum number of edges in an $H$-free $r$-uniform hypergraph on $n$ vertices,
and the Lagrangian density  of $H$ is defined as
$\pi_{\lambda}(H)=\sup \{r! \lambda(G) : G \;\text{is an}\; H\text{-free} \;r\text{-uniform hypergraph}\}$, where $\lambda(G)=\max\{\sum_{e \in G}\prod\limits_{i\in e}x_{i}: x_i\ge 0 \;\text{and}\; \sum_{i\in V(G£©} x_i=1\}$ is the Lagrangian of $G$.
For an $r$-uniform hypergraph $H$ on $t$ vertices, it is clear that $\pi_{\lambda}(H)\ge r!\lambda{(K_{t-1}^r)}$. Let us say that an $r$-uniform hypergraph $H$ on $t$ vertices is perfect if $\pi_{\lambda}(H)= r!\lambda{(K_{t-1}^r)}$. A result of Motzkin and Straus imply that all graphs are perfect. It is interesting to explore what kind of hypergraphs are perfect.
Let $P_t=\{e_1, e_2, \dots, e_t\}$ be the linear $3$-uniform path of length $t$, that is, $|e_i|=3$, $|e_i \cap e_{i+1}|=1$ and $e_i \cap e_j=\emptyset$ if $|i-j|\ge 2$.
We show that $P_3$ and $P_4$ are perfect, this supports a conjecture in \cite{yanpeng} proposing that all $3$-uniform linear hypergraphs are perfect.
Applying the results on Lagrangian densities, we determine the Tur\'an numbers of their extensions.
\end{abstract}

Key Words: Lagrangian of hypegraphs, Tur\'an number

\section{Introduction}
For a positive integer $n$, let $[n]$ denote $\{1, 2, \ldots, n\}$.
%For a set $V$ and a positive integer $r$, we denote by $V^{(r)}$ or ${V \choose r}$ the family of all $r$-subsets of $V$.
 An {\em $r$-uniform hypergraph} or {\em $r$-graph $G$} consists of a set $V(G)$ of vertices and a set $E(G) \subseteq V(G) ^{(r)}$ of edges.
 %Let $v(G)$ be the number of vertices and $e(G)$ be the number of edges in $G$.
A $2$-graph is called a simple graph.
We write $G$ for $E(G)$ sometimes.
 An edge $e=\{a_1, a_2, \ldots, a_r\}$ will be simply denoted by $a_1a_2 \ldots a_r$.
 An $r$-graph $F$ is  a {\it subgraph} of an $r$-graph $G$, denoted by $F\subseteq G$, if $V(F)\subseteq V(G)$ and $E(F)\subseteq E(G)$.
 Given an $r$-graph $G$ and $U \subseteq V(G)$, the {\em induced subgraph} $G[U]$ is the $r$-graph with vertex set $U$ and edge set $\{e\in G: e\subseteq U\}$.
 Let $K^{r}_t$ denote the {\em complete $r$-graph} on $t$ vertices, and $K^{r-}_t$ be removing one edge from  $K^{r}_t$.
A hypergprah $H$ {\em covers pairs} if every pair of vertices is contained in some edge of $H$.
%Let $r\ge 3$ and $F$ be an $r$-graph.
 The {\em extension} of an $r$-graph $F$, denoted by $H^F$, is defined as follows.
 For each pair of vertices $v_i,v_j \in V(F)$ not covered in $F$, we add a set $B_{ij}$ of $r-2$ new vertices and the edge $\{v_i,v_j\} \cup B_{ij}$, where all $B_{ij}$ are pairwise disjoint over all such pairs $\{i,j\}$.
 %In this paper, we always let a subset of $ \mathbb{N}$ be the vertex set of a hypergraph, and hence give the vertex set a natural order in this case.

Given an $r$-graph $F$,  an $r$-graph $G$ is called \emph{$F$-free} if it does not contain a copy of $F$ as a subgraph.
For a fixed positive integer $n$ and an $r$-graph $F$, the {\em Tur\'an number} of $F$, denoted by $ex(n,F)$, is the maximum number of edges in an $F$-free $r$-graph on $n$ vertices.
Determining the value $ex(n,F)$ for a general $r$-graph $F$ is a challenging problem in extremal combinatorics.
For simple graphs, Erd\H{o}s, Stone and Simonovits determined the asymptotic value of Tur\'an numbers of all graphs except bipartite graphs. Very few results are known for hypergraphs and a survey on this topic can be found in Keevash's survey paper \cite{Keevash}.
Lagrangian method has been a helpful tool for hypergraph Tur\'an problem. We now proceed to define the Lagrangian of an $r$-graph.

\begin{defi}
Let $G$ be an $r$-graph on $[n]$ and let
  $\vec{x}=(x_1,\ldots,x_n) \in [0,\infty)^n$,
define
$$\lambda (G,\vec{x})=\sum_{e \in G}\prod\limits_{i\in e}x_{i}.$$
\end{defi}
Denote $$\Delta_n=\{\vec{x} =(x_1,x_2,\ldots ,x_n) \in [0,\infty)^{n}: x_1+x_2+\dots+ x_n =1\}.$$
The {\em Lagrangian} of
$G$, denoted by $\lambda (G)$, is defined as
 $$\lambda (G) = \max \{\lambda (G, \vec{x}): \vec{x} \in \Delta_n \}.$$

The value $x_i$ is called the {\em weight} of the vertex $i$ and a vector $\vec{x} \in {\Delta_n}$ is called a {\em feasible weight vector} on $G$.
A vector  $\vec{y}\in {\Delta_n}$ is called an {\em optimum weight vector} on $G$ if $\lambda (G, \vec{y})=\lambda(G)$.

In \cite{MS}, Motzkin and Straus established a connection between the Lagrangian of any given $2$-graph and it's maximum complete subgraphs.
\begin{theo} {\em(\cite{MS})} \label{MStheo}
If $G$ is a $2$-graph in which a maximum complete subgraph has  $t$ vertices, then
$$\lambda(G)=\lambda(K_t^2)={1 \over 2}(1 - {1 \over t}).$$
\end{theo}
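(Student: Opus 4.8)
The plan is to prove the identity $\lambda(G)={1\over2}\bigl(1-{1\over t}\bigr)$ directly, by two matching inequalities; applying it with $G$ replaced by $K_t^2$ (whose clique number is also $t$) then gives $\lambda(G)=\lambda(K_t^2)$. For the lower bound, let $T\subseteq V(G)$ be the vertex set of a complete subgraph on $t$ vertices and define $\vec{x}\in\Delta_n$ by $x_i=1/t$ for $i\in T$ and $x_i=0$ otherwise. Every pair of vertices inside $T$ forms an edge of $G$, so, dropping the remaining nonnegative terms, $\lambda(G,\vec{x})\ge\frac{t(t-1)}{2}\cdot\frac{1}{t^2}={1\over2}\bigl(1-{1\over t}\bigr)$, whence $\lambda(G)\ge{1\over2}(1-{1\over t})$.

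For the upper bound, among all optimum weight vectors on $G$ I choose one, $\vec{y}$, whose support $S=\{i:y_i>0\}$ has minimum size, and claim that $G[S]$ is complete. Suppose not, and pick $i,j\in S$ with $\{i,j\}\notin E(G)$. Put $p_k=\sum_{\ell:\,\{k,\ell\}\in E(G)}y_\ell$ for $k\in\{i,j\}$, and let $R$ collect the monomials of $\lambda(G,\vec{y})$ that contain neither $y_i$ nor $y_j$, so that $\lambda(G,\vec{y})=y_ip_i+y_jp_j+R$; the absence of a $y_iy_j$ term --- which is exactly where $\{i,j\}\notin E(G)$ enters --- means $p_i,p_j,R$ are unchanged when only $y_i,y_j$ are modified. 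Assuming $p_i\ge p_j$ and replacing $(y_i,y_j)$ by $(y_i+y_j,\,0)$ keeps the vector in $\Delta_n$ and changes $\lambda(G,\cdot)$ by $y_j(p_i-p_j)\ge0$, producing another optimum weight vector with strictly smaller support --- a contradiction. Hence $S$ is a clique and $s:=|S|\le t$; then $\lambda(G,\vec{y})=\sum_{\{i,j\}\subseteq S}y_iy_j=\frac12\Bigl(\bigl(\sum_{i\in S}y_i\bigr)^2-\sum_{i\in S}y_i^2\Bigr)=\frac12\bigl(1-\sum_{i\in S}y_i^2\bigr)$, and by Cauchy--Schwarz $\sum_{i\in S}y_i^2\ge\frac1s\bigl(\sum_{i\in S}y_i\bigr)^2=\frac1s$, so $\lambda(G)=\lambda(G,\vec{y})\le\frac12\bigl(1-\frac1s\bigr)\le\frac12\bigl(1-\frac1t\bigr)$, the last inequality because $x\mapsto\frac12(1-1/x)$ is increasing and $s\le t$.

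The two computations are routine; the step that really uses the hypothesis, and the one I expect to be the crux, is the weight-shifting argument for the upper bound. Its content is that non-adjacency of $i$ and $j$ makes $\lambda(G,\cdot)$ affine-linear in $y_i$ once $y_i+y_j$ is held fixed, so that moving all of the weight onto whichever of $i,j$ has the larger ``link sum'' $p_i,p_j$ cannot decrease the objective; this is what forces an optimum vector of minimum support to be supported on a clique and thereby reduces the whole problem to the elementary case of a clique. The one point that needs to be stated with care is the existence of an optimum weight vector of minimum support (compactness of $\Delta_n$ and continuity of $\lambda(G,\cdot)$) together with the bookkeeping that $p_i,p_j,R$ genuinely do not see $y_i,y_j$.
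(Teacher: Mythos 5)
Your proof is correct, and it is essentially the classical Motzkin--Straus argument; the paper itself does not reprove this theorem but cites it from [MS], so there is no in-paper proof to compare against. Both the lower bound (uniform weights on a maximum clique) and the upper bound (take an optimizer of minimum support, use the affine-linearity in $(y_i,y_j)$ along $y_i+y_j=\mathrm{const}$ when $\{i,j\}\notin E(G)$ to shift all weight onto one of them, conclude the support is a clique, then apply Cauchy--Schwarz) are carried out correctly, including the key bookkeeping that $p_i$, $p_j$, and $R$ do not involve $y_i$ or $y_j$ precisely because $i\not\sim j$.
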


Given an $r$-graph $F$, the {\em Lagrangian density } $\pi_{\lambda}(F)$ of $F$ is defined as
$$\pi_{\lambda}(F)=\sup \{r! \lambda(G): G \;\text{is an}\; F\text{-free} \;r\text{-graph}\}.$$
The Lagrangian density of an $r$-graph is closely related to its Tur\'an density.
\begin{prop}{\em (\cite{Sidorenko-87, Pikhurko})}\label{relationlt} Let $F$ be an $r$-graph. Then \\
$(i)$ $\pi(F)\le \pi_{\lambda}(F);$ \\
$(ii)$ $\pi(H^F)=\pi_{\lambda}(F).$ In particular, if $F$ covers pairs, then $\pi(F)= \pi_{\lambda}(F).$
\end{prop}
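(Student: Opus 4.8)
The plan is to verify $(i)$, then establish the two inequalities $\pi(H^F)\ge\pi_\lambda(F)$ and $\pi(H^F)\le\pi_\lambda(F)$ separately; the final clause then follows for free, since if $F$ covers pairs there are no uncovered pairs to add and hence $H^F=F$, so that $(ii)$ reads $\pi(F)=\pi_\lambda(F)$. Here $\pi(F):=\lim_{n\to\infty}ex(n,F)/\binom nr$ is the Tur\'an density of $F$. For $(i)$, let $G$ be an $F$-free $r$-graph on $[n]$ with $ex(n,F)$ edges. Evaluating $\lambda$ at the uniform weight vector $\vec u=(\tfrac1n,\dots,\tfrac1n)$ gives $\lambda(G)\ge\lambda(G,\vec u)=ex(n,F)/n^r$, so
$$r!\,\lambda(G)\ \ge\ \frac{r!\,ex(n,F)}{n^r}\ =\ \frac{ex(n,F)}{\binom nr}\cdot\frac{r!\binom nr}{n^r}\ =\ (1-o(1))\,\frac{ex(n,F)}{\binom nr}.$$
As $G$ is $F$-free, the left side is at most $\pi_\lambda(F)$; letting $n\to\infty$ yields $\pi(F)\le\pi_\lambda(F)$.

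To prove $\pi(H^F)\ge\pi_\lambda(F)$, fix $\varepsilon>0$, choose an $F$-free $r$-graph $G$ with $r!\,\lambda(G)>\pi_\lambda(F)-\varepsilon$, and let $\vec y$ be an optimum weight vector on $G$. For large $n$ form the blow-up $\widetilde G$ of $G$: replace vertex $i$ by a class $V_i$ of $\lfloor y_i n\rfloor$ vertices and take as edges all transversals of edges of $G$, so that $e(\widetilde G)=(1-o(1))\,r!\,\lambda(G)\binom nr$. The key point is that $\widetilde G$ is $H^F$-free. Suppose not, and let $\phi$ embed $H^F$ in $\widetilde G$, with $\psi\colon V(\widetilde G)\to V(G)$ the projection sending $V_i$ to $i$. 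For any $v,v'\in V(F)$, either $\{v,v'\}$ lies in an edge of $F$, or $\{v,v'\}$ is uncovered and $\{v,v'\}\cup B_{vv'}$ is an edge of $H^F$; in both cases $\phi$ carries this edge of $H^F$ to a transversal of an edge of $G$, forcing $\phi(v)$ and $\phi(v')$ into distinct classes, so $\psi\phi$ is injective on $V(F)$. Since $\phi$ also sends every edge of $F$ to a transversal of an edge of $G$, the map $\psi\phi|_{V(F)}$ embeds $F$ in $G$ --- a contradiction. Hence $ex(n,H^F)\ge e(\widetilde G)$, so $\pi(H^F)\ge r!\,\lambda(G)>\pi_\lambda(F)-\varepsilon$; now let $\varepsilon\to0$.

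For $\pi(H^F)\le\pi_\lambda(F)$, suppose for contradiction that some fixed $\varepsilon>0$ admits $H^F$-free $r$-graphs $G$ on arbitrarily large $n$ with $e(G)\ge(\pi_\lambda(F)+\varepsilon)\binom nr$. Since $\pi_\lambda(F)<1$, iteratively deleting a vertex of degree below $(\pi_\lambda(F)+\tfrac\varepsilon2)\binom{m-1}{r-1}$ (where $m$ is the current order) destroys fewer than $(\pi_\lambda(F)+\tfrac\varepsilon2)\binom nr$ edges in all, so we may pass to a subgraph, still called $G$, on $n\to\infty$ vertices with $\delta(G)\ge(\pi_\lambda(F)+\tfrac\varepsilon2)\binom{n-1}{r-1}$ and $e(G)\ge(\pi_\lambda(F)+\tfrac\varepsilon2)\binom nr$. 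For a large constant $k$, a positive fraction of $k$-subsets $S$ satisfy $e(G[S])>(\pi_\lambda(F)+\tfrac\varepsilon4)\binom kr$, whence $r!\,\lambda(G[S])\ge r!\,e(G[S])/k^r>\pi_\lambda(F)$ and $G[S]\supseteq F$ by the definition of $\pi_\lambda(F)$; averaging then produces $\Omega(n^{v(F)})$ copies of $F$ in $G$. The target is a copy of $F$ all of whose uncovered pairs have codegree at least a constant $C=C(F,r)$ in $G$: from such a copy one greedily selects pairwise-disjoint $(r-2)$-sets $B_{ij}$ avoiding the boundedly many vertices used so far, completing each uncovered pair to an edge and thereby embedding $H^F$ in $G$, a contradiction. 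To find such a copy one uses the minimum-degree bound: a codegree count shows that the graph $J$ on $V(G)$ whose edges are the pairs of codegree at least $\gamma n^{r-2}$ has minimum degree $(\pi_\lambda(F)+\tfrac\varepsilon4-o(1))n$ for small $\gamma$, so only an $O(\gamma)$-fraction of the $\binom nr$ potential edges meet a pair outside $J$; hence the subgraph $G^{*}$ of edges whose pairs all lie in $J$ still has density $>\pi_\lambda(F)$, contains $\Omega(n^{v(F)})$ copies of $F$ (whose covered pairs automatically lie in $J$), and the remaining task is to show that a positive proportion of these copies have their uncovered pairs in $J$ as well.

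I expect this final step to be the genuine obstacle. A crude union bound over low-codegree pairs only bounds the number of ``bad'' copies (those with an uncovered pair of small codegree) by $O(n^{v(F)})$, the same order as the supersaturation count, so one needs a finer argument exploiting the density --- indeed the Motzkin--Straus structure --- of $J$ and, for connected $F$ such as $P_t$, the connectivity of the covered-pair graph of $F$; this is the technical core carried out in \cite{Sidorenko-87, Pikhurko}. As already observed, the case when $F$ covers pairs is immediate: then $H^F=F$, so $(ii)$ gives $\pi(F)=\pi(H^F)=\pi_\lambda(F)$.
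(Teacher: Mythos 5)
Your proof of $(i)$ via the uniform weighting and of the lower bound $\pi(H^F)\ge\pi_\lambda(F)$ via the blow-up of an almost-optimal $F$-free $G$ are both correct and standard. The gap, which you correctly identify yourself, is in the upper bound $\pi(H^F)\le\pi_\lambda(F)$; however, the route you sketch (supersaturation, then filtering for copies of $F$ whose uncovered pairs all have large codegree) is genuinely harder than necessary and, as you observe, does not close. The standard argument of Sidorenko and Pikhurko avoids counting copies of $F$ entirely and uses the Lagrangian in a more structural way, exploiting exactly Fact~\ref{dense} of the present paper (dense $r$-graphs cover pairs).

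Here is the missing step. Let $G$ be $H^F$-free on $[n]$. Set $C = |V(F)| + (r-2)\binom{|V(F)|}{2}$, say. Repeatedly, while some pair $\{u,v\}$ has codegree $d_G(u,v)$ strictly between $0$ and $C$, delete every edge containing $\{u,v\}$. Each iteration permanently zeroes the codegree of at least one pair and deletes fewer than $C$ edges, so the process ends after at most $\binom{n}{2}$ iterations, removing $O(n^2)=o(n^r)$ edges in total; call the result $G'$. In $G'$ every covered pair has codegree at least $C$. Now take an optimum weight vector for $G'$ and let $H=G'[S]$ be its support; then $\lambda(G')=\lambda(H)$ and $H$ is dense, so by Fact~\ref{dense} it covers pairs. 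Claim: $H$ is $F$-free. If $\phi$ embedded $F$ into $H$, then \emph{every} pair $\{\phi(i),\phi(j)\}$ with $i,j\in V(F)$ would be covered in $H\subseteq G'$ (because $H$ covers pairs), hence would have codegree at least $C$ in $G'$; one could then greedily choose pairwise disjoint $(r-2)$-sets for the uncovered pairs of $F$, avoiding the at most $C$ vertices already committed, producing a copy of $H^F$ in $G'\subseteq G$ --- contradiction. Hence $H$ is $F$-free, so $\lambda(G')=\lambda(H)\le\pi_\lambda(F)/r!$, and therefore $e(G')\le\lambda(G')\,n^r\le\pi_\lambda(F)\,n^r/r!$ by the uniform-weight bound you already used. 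Since $e(G)=e(G')+O(n^2)$, dividing by $\binom nr$ and letting $n\to\infty$ gives $\pi(H^F)\le\pi_\lambda(F)$. In short: the ``genuine obstacle'' you flagged disappears once you pass to the support of the Lagrangian rather than trying to locate good copies of $F$ by averaging; the density of $H$ guarantees that \emph{all} pairs inside it are covered, and the cleaning step guarantees that covered pairs can be greedily extended.
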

%The Lagrangian of a hypergraph has been a useful tool in hypergraph extremal problems.
Earlier applications of Lagrangians of  hypergraphs include that Frankl and R\"{o}dl \cite{FR} applied it in disproving the
long standing jumping constant conjecture of Erd\H{o}s.
Sidorenko \cite{Sidorenko-89},  and Frankl and F\"uredi \cite{FF} applied Lagrangians of hypergraphs in
finding Tur\'an densities of hypergraphs, generalizing work of Motzkin and Straus \cite{MS}, and
Zykov \cite{Z}.
More recent developments of the method were obtained by Pikhurko \cite{Pikhurko} and in the papers \cite{HK, NY, BIJ, NY2, Jenssen}.
 In addition to its applications, it is interesting in its own right to determine the  maximum Lagrangian of $r$-graphs with certain properties. For example, a challenging conjecture of Frankl and F\"uredi \cite{FF} considers the question of determining the maximum  Lagrangian among all $r$-graphs with the fixed  number of edges.  Talbot \cite{T} made some breakthrough in  confirming  this conjecture for some cases.
 Subsequent progress in this conjecture  were made in the papers of Tang, Peng, Zhang and Zhao \cite{TPZZ2}, Tyomkyn \cite{Tyo}, and Lei, Lu and Peng \cite{LLP2018}. Recently, Gruslys, Letzter and Morrison \cite{GLM2018} confirmed this conjecture for $r=3$ and the number of edges is sufficiently large.
 In this paper, we focus on  the Lagrangian density of an $r$-graph $F$.

 For an $r$-graph $H$ on $t$ vertices, it is clear that $\pi_{\lambda}(H)\ge r!\lambda{(K_{t-1}^r)}$. Let us say that an $r$-graph $H$ on $t$ vertices is {\em perfect} if $\pi_{\lambda}(H)= r!\lambda{(K_{t-1}^r)}$.
Theorem \ref{MStheo} implies that all  $2$-graphs are perfect.  It is interesting to explore what kind of hypergraphs are perfect.  %Let $G\bigsqcup H$ denote the disjoint union of $G$ and $H$. Let us  propose the following conjecture.
Sidorenko \cite{Sidorenko-89} showed that the $(r-2)$-fold enlargement of a tree with order greater than some number $A_r$ is perfect.
 Hefetz and Keevash \cite{HK} showed that  a $3$-uniform matching  of size 2 is perfect.    Jiang, Peng and Wu   \cite{JPW}  verified  that any $3$-uniform matching is perfect.
 Pikhurko \cite{Pikhurko},  and Norin and Yepremyan  \cite{NY2} showed that an $r$-uniform tight  path of length 2 is perfect for $r=4$ and $r=5$ or $6$ respectively.
 Jenssen \cite{Jenssen} showed that  a path of length 2 formed by two edges intersecting at   $r-2$ vertices is perfect for $r=3, 4, 5, 6, 7$.   An $r$-graph is {\em linear} if any two edges have at most 1 vertex in common.
 Hu, Peng and Wu \cite{HPW},  and Chen, Liang and Peng \cite{CLP} showed that  the disjoint union of a $3$-uniform linear path of length $2$ or $3$ and a  $3$-uniform matching, and the disjoint union of a $3$-uniform tight path of length $2$  and a  $3$-uniform matching are perfect. Yan and Peng \cite{yanpeng} showed that the $3$-uniform linear cycle of length 3 (\{123, 345, 561\}) is perfect, and $F_5$ (\{123, 124, 345\}) is not perfect (by determining its Lagrangian density).
 Bene Watts, Norin and Yepremyan \cite{NWY} showed that  an $r$-uniform matching  of size 2 is not perfect for $r\ge 4$ confirming a conjecture of  Hefetz and Keevash \cite{HK}.
 Wu, Peng and Chen \cite{WPC} showed the same result for $r=4$ independently.
 Though an $r$-uniform matching  of size 2 is not perfect for $r\ge 4$, we think that an $r$-uniform matching with large enough size is perfect. Yan and Peng  proposed  the following conjecture in \cite{yanpeng}.

\begin{con}\label{con1}{\em (\cite{yanpeng})}
 For $r\ge 3$, there exists $n$ such that a linear $r$-graph with at least $n$ vertices is perfect.
\end{con}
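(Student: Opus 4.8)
The plan is to attack Conjecture~\ref{con1} by combining the classical Lagrangian reduction of Frankl and R\"odl with a supersaturation/embedding argument, thereby isolating a single extremal inequality. The first step: for any $r$-graph $H$ it suffices to bound $\lambda(G)$ over $H$-free $r$-graphs $G$ that \emph{cover pairs} and carry a strictly positive optimum weight vector, because the weight-transfer operation of Frankl and R\"odl \cite{FR} lets one push all the weight onto such a ``dense'' subgraph without decreasing the Lagrangian. So, writing $t=|V(H)|$, the goal becomes: if $H$ is linear and $t\ge n$, then every pair-covering $H$-free $r$-graph $G$ has $\lambda(G)\le \lambda(K^r_{t-1})$. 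If $|V(G)|\le t-1$ this is immediate, since the Lagrangian is monotone under taking subgraphs and $\lambda(K^r_s)$ increases in $s$; hence the entire difficulty sits in the case $|V(G)|\ge t$, where one must produce a copy of $H$ inside $G$ and derive a contradiction.

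The embedding step should exploit the extreme sparsity of linear hypergraphs. A linear $r$-graph on $t$ vertices has at most $t(t-1)/(r(r-1))$ edges, so it is far sparser than the host, and a lower bound $\lambda(G)\ge \lambda(K^r_{t-1})$ — which, for fixed $r$ and large $t$, forces $r!\,\lambda(G)$ to lie within $o(1)$ of $1$ — should translate, via supersaturation, into strong ``richness'': all but a bounded number of vertices carry non-negligible weight, and each pair lies in many edges. For a linear path $H=P_t$ this yields a greedy embedding: process the edges of $P_t$ in order, each meeting the current partial embedding in a single vertex, and pick its remaining $r-1$ vertices from the large reservoir of still-unused heavy vertices of $G$; this succeeds as long as $|V(G)|$ exceeds some constant multiple $C(r)\,t$. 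For a general linear $H$ one would instead invoke dense-host embedding machinery. Either way, the remaining and decisive regime is $t\le |V(G)|<C(r)\,t$, where the pigeonhole margin is too thin and a genuine extremal estimate is needed.

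That estimate — that a pair-covering $H$-free $r$-graph on $s\ge t$ vertices has Lagrangian strictly below $\lambda(K^r_{t-1})$, \emph{uniformly over all linear $H$ on $t$ vertices} — is the main obstacle, and establishing it in full is essentially the content of the conjecture; the present paper proves it in the first nontrivial $3$-uniform instances, $H=P_3$ and $H=P_4$, by pinning down the structure of near-extremal $P_3$-free and $P_4$-free $3$-graphs. The heuristic behind the general estimate is that covering every pair among $\ge t$ vertices while still avoiding a very sparse linear $H$ forces $G$ to disperse its edges in a way that concentrates weight less efficiently than $K^r_{t-1}$ does; converting this into a proof valid for every linear $H$ of large order is open. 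A natural program is to prove it first for all $3$-uniform linear paths $P_t$ (with $P_3$ and $P_4$ here as base cases), then for all $3$-uniform linear hypergraphs, and finally for general $r$, at each stage reusing the Frankl--R\"odl reduction together with a progressively more robust embedding/stability argument. (For $r=2$ the analogous statement is a triviality: every $2$-graph is linear, and Theorem~\ref{MStheo} makes all $2$-graphs perfect.)
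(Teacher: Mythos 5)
The statement you were asked to prove is Conjecture~\ref{con1}, which the paper explicitly presents as an open conjecture (attributed to Yan and Peng) and does \emph{not} prove; the paper only supports it by establishing the two special cases $P_3$ and $P_4$. So there is no ``paper's own proof'' to compare against, and a blind proof attempt cannot be expected to succeed. To your credit, you recognize this: you correctly locate the heart of the matter in the regime $t\le |V(G)| < C(r)\,t$ and state plainly that the uniform extremal estimate there ``is essentially the content of the conjecture'' and ``is open.'' That is an honest and accurate assessment, and your proposed reduction (restrict to dense, pair-covering $G$ via the Frankl--R\"odl weight-transfer; split into the small-host, large-host, and intermediate regimes) is a sensible framing of the problem.

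That said, even the parts you treat as easy are not as clean as sketched. The greedy embedding in the large-host regime is not rigorous as stated: covering pairs guarantees, for the current endpoint $v$ and any unused $u$, \emph{some} edge through $\{v,u\}$, but that edge may reuse previously embedded vertices, so the ``pick the remaining $r-1$ vertices from the reservoir of unused heavy vertices'' step needs an actual argument. Likewise, $\lambda(G)\ge\lambda(K^r_{t-1})$ together with pair-covering does not by itself yield supersaturation-style claims such as ``each pair lies in many edges'': the extremal example $K^r_{t-1}$ together with isolated structure has exactly this Lagrangian with the entire optimal weight supported on $t-1$ vertices, so Lagrangian lower bounds concentrate rather than spread. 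The paper's actual technique for $P_3,P_4$ is quite different from your proposed supersaturation route: it shows by careful case analysis (Lemmas~\ref{lemmaP_3}, \ref{lemmaF_i}, \ref{lemmaP_4}) that left-compressing a dense, pair-covering $P_t$-free $3$-graph preserves $P_t$-freeness (under a Lagrangian threshold for $t=4$), then exploits the structure of left-compressed graphs with a one-step induction via the link of vertex $1$. That structural/compression machinery is exactly what you would need to generalize, and the paper's own Remark (Section~5) notes this is the bottleneck. In short: your write-up is a reasonable research outline that correctly flags the open gap, but it is not a proof, and neither the paper nor the current literature supplies one.
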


 A natural and interesting question is whether a linear hyperpath perfect? Let $P_t=\{e_1, e_2, \dots, e_t\}$ be the $3$-uniform linear path of length $t$, that is, $|e_i|=3$, $|e_i \cap e_{i+1}|=1$ and $e_i \cap e_j=\emptyset$ if $|i-j|\ge 2$. We show that $P_3$ and $P_4$ are perfect in this paper.
In the joint work with Jiang \cite{JPW}, we  applied the fact that left-compressing an $M^r_t$-free $r$-graph yields an $M^r_t$-free $r$-graph, where $M^r_t$ is an $r$-uniform matching of size $t$.
In general, left-compressing a $P_t$-free $3$-graph may not result in a $P_t$-free $3$-graph. However,  we manage to prove that left-compressing a dense $P_t$-free $3$-graph will result in $P_t$-free $3$-graph for $t=3$ or $4$ by structural analysis, and determine the Lagrangian density of $P_3$, and $P_4$.

In the next section, we give some useful properties of the Lagrangian function.
In Section 3, we prove that left-compressing  a $P_3$-free 3-graph ($P_4$-free 3-graph) that covers pairs results in a  $P_3$-free ($P_4$-free) $3$-graph, and
show that $P_t$  is perfect for $t=3$ or $4$.
In Section 4, we give the Tur\'an numbers of their extensions  by using a similar stability argument for lager enough $n$ as in \cite{Pikhurko} and several other papers.

%%%%%%%%%%%%%%%%%%%%%%%%%%%%%%%%%%%%%%%%%%%%%%%%%

\section{Some properties of the Lagrangian function}

In this section, we develop some useful properties of Lagrangian functions.
The following fact follows immediately from the definition of the Lagrangian.
\begin{fact}\label{mono}
Let $F$, $G$ be $r$-graphs and $F\subseteq G$. Then $\lambda (F) \le \lambda (G).$
\end{fact}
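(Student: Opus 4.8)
The plan is to lift an optimal weighting of $F$ to a weighting of $G$ and exploit the non-negativity of the monomials. First I would choose an optimum weight vector $\vec{y}=(y_i)_{i\in V(F)}\in\Delta_{|V(F)|}$ for $F$, so that $\lambda(F,\vec{y})=\lambda(F)$. Since $F\subseteq G$ means $V(F)\subseteq V(G)$ and $E(F)\subseteq E(G)$, I would then define a vector $\vec{z}\in\Delta_{|V(G)|}$ by setting $z_i=y_i$ for $i\in V(F)$ and $z_i=0$ for $i\in V(G)\setminus V(F)$. The coordinate sum is unchanged, so $\vec{z}$ is a feasible weight vector on $G$.

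Next I would compare the two Lagrangian values edge by edge. For every $e\in E(F)$ we have $\prod_{i\in e}z_i=\prod_{i\in e}y_i$, while for every $e\in E(G)\setminus E(F)$ the product $\prod_{i\in e}z_i\ge 0$ because all weights are non-negative. Summing over $E(G)=E(F)\cup\bigl(E(G)\setminus E(F)\bigr)$ gives
$$\lambda(G,\vec{z})=\sum_{e\in E(F)}\prod_{i\in e}z_i+\sum_{e\in E(G)\setminus E(F)}\prod_{i\in e}z_i\ \ge\ \sum_{e\in E(F)}\prod_{i\in e}y_i=\lambda(F,\vec{y})=\lambda(F).$$
Since $\lambda(G)\ge\lambda(G,\vec{z})$ by the definition of the Lagrangian as a maximum over $\Delta_{|V(G)|}$, we conclude $\lambda(F)\le\lambda(G)$.

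There is essentially no obstacle here: the only point that deserves a word is that padding the weight vector with zeros keeps it inside the simplex and leaves the contribution of the edges of $F$ unchanged, both of which are immediate from the non-negativity constraint $x_i\ge 0$ in the definition of $\Delta_n$. I expect the whole argument to be a couple of lines.
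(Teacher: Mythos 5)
Your argument is correct and is exactly the standard reasoning the paper has in mind when it says the fact ``follows immediately from the definition'': pad the optimal weighting of $F$ with zeros on $V(G)\setminus V(F)$ to get a feasible weighting on $G$, and note that the extra nonnegative monomials can only increase the value. Nothing to add.
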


Given an $r$-graph $G$ and a vertex $i \in V(G)$, the {\em link} of $i$ in $G$, denoted by $L_G(i)$, is the $(r-1)$-graph with edge set $\left\{e\in {V(G)\setminus \{i\} \choose r-1}: e \cup \{i\} \in E(G)\right\}$. We will drop the subscript $G$ when there is no confusion.
Given $i,j\in V(G)$, define
$$L_G(j \setminus i)=\left\{f\in \binom{V(G)\setminus \{i,j\}}{r-1}: f \cup \{j\} \in E(G) {\rm \ and \ } f \cup \{i\} \notin E(G)\right\},$$ and define {\em the compression of $j$ to $i$} as
$$\pi_{ij}(G)=\left(E(G)\setminus \{f\cup \{j\}: f\in  L_G(j \setminus i)\} \right) \cup \{ f\cup \{i\}: f \in L_G(j \setminus i) \}.$$
We say $G$ on vertex set $[n]$ is {\em left-compressed} if for every $i,j$, $1\le i < j \le n$, $L_G(j \setminus i)= \emptyset$.
By the definition of $\pi_{ij}(G)$, it's straightforward to verify the following fact.

\begin{fact}\label{compression-preserve}
Let $G$ be an $r$-graph on the vertex set $[n]$. Let $\vec{x}=(x_1,x_2,\dots,x_n)$ be a feasible weight vector on $G$. If $x_i \ge x_j$, then $\lambda(\pi_{ij}(G),\vec{x})\ge \lambda(G,\vec{x})$.
\end{fact}
\medskip

An $r$-graph $G$ is {\em dense} if $\lambda (G') < \lambda (G)$ for every subgraph $G'$ of $G$ with $|V(G')|<|V(G)|$.
 This is equivalent to that  no coordinate in all optimum weight vector is zero.

\begin{fact} {\em (\cite{FR})}\label{dense}
If $G$ is a dense $r$-graph then $G$ covers pairs.
\end{fact}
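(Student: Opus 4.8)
The plan is to argue by contradiction, using only the definition of denseness together with the monotonicity Fact~\ref{mono}. Suppose $G$ is a dense $r$-graph on $[n]$ that does not cover pairs, and fix a pair $\{i,j\}$ with no edge of $G$ containing both $i$ and $j$. Let $\vec{x}=(x_1,\dots,x_n)$ be an optimum weight vector on $G$. If $x_i=0$ or $x_j=0$, say $x_j=0$, then $\vec{x}$ is supported on $[n]\setminus\{j\}$, so $\lambda(G[[n]\setminus\{j\}])\ge\lambda(G,\vec{x})=\lambda(G)$; combined with Fact~\ref{mono} this gives $\lambda(G[[n]\setminus\{j\}])=\lambda(G)$, contradicting denseness since $|[n]\setminus\{j\}|<n$. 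So we may assume $x_i>0$ and $x_j>0$.

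The crucial point is that, because no edge of $G$ meets $\{i,j\}$ in two vertices, $\lambda(G,\vec{x})$ is an affine function of $x_i$ once $x_i+x_j$ and all the remaining coordinates are held fixed. Indeed, partitioning $E(G)$ into the edges through $i$, the edges through $j$, and the edges avoiding both, we obtain
$$\lambda(G,\vec{x})=x_i\,\lambda(L_G(i),\vec{x})+x_j\,\lambda(L_G(j),\vec{x})+R,$$
where $R=\sum_{e\in G,\,e\cap\{i,j\}=\emptyset}\prod_{k\in e}x_k$; moreover $\lambda(L_G(i),\vec{x})$, $\lambda(L_G(j),\vec{x})$ and $R$ are all independent of $x_i$ and of $x_j$, since $\{i,j\}$ being uncovered means that neither $i$ nor $j$ lies in any edge of $L_G(i)$ or of $L_G(j)$.

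Now push all of the combined weight $x_i+x_j$ onto whichever of $i,j$ has the larger link value. Assume $\lambda(L_G(i),\vec{x})\ge\lambda(L_G(j),\vec{x})$ (the other case is symmetric), and define $\vec{x}'$ by $x_i'=x_i+x_j$, $x_j'=0$, and $x_k'=x_k$ for $k\notin\{i,j\}$. Then $\vec{x}'\in\Delta_n$ and
$$\lambda(G,\vec{x}')=(x_i+x_j)\,\lambda(L_G(i),\vec{x})+R\ \ge\ x_i\,\lambda(L_G(i),\vec{x})+x_j\,\lambda(L_G(j),\vec{x})+R=\lambda(G,\vec{x})=\lambda(G).$$
Since $\vec{x}'$ is supported on $[n]\setminus\{j\}$, this yields $\lambda(G[[n]\setminus\{j\}])\ge\lambda(G,\vec{x}')\ge\lambda(G)$, and Fact~\ref{mono} forces equality; as $|[n]\setminus\{j\}|<n$, this contradicts the definition of dense.

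I do not expect a genuine obstacle: the argument is the textbook ``an affine function on a segment is maximized at an endpoint'' weight-shifting move. The only point that needs care is the edge bookkeeping behind the displayed identity for $\lambda(G,\vec{x})$ --- in particular the observation that an uncovered pair $\{i,j\}$ makes both links $L_G(i)$, $L_G(j)$ and the remainder $R$ independent of $x_i$ and $x_j$, which is precisely what makes the shift cost-free.
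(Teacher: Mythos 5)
The paper cites this fact from Frankl--R\"odl without supplying a proof, so there is no in-text argument to compare against. Your weight-shifting proof is correct and complete: since no edge of $G$ contains both $i$ and $j$, the decomposition
$\lambda(G,\vec{x})=x_i\,\lambda(L_G(i),\vec{x})+x_j\,\lambda(L_G(j),\vec{x})+R$
has all three coefficients independent of $x_i$ and $x_j$, so $\lambda(G,\cdot)$ is affine along the segment $\{x_i+x_j=\text{const}\}$ and is therefore maximized at an endpoint; pushing all the combined mass onto the vertex with the larger link value produces a feasible weight vector with a zero coordinate and Lagrangian at least $\lambda(G)$, which by Fact~\ref{mono} forces $\lambda(G[[n]\setminus\{j\}])=\lambda(G)$ and contradicts denseness. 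The textbook route (the one implicit in \cite{FR}) instead invokes the stationarity condition of Lemma~\ref{lemma-partion} to get $\partial\lambda/\partial x_i=\partial\lambda/\partial x_j=r\lambda(G)$, from which the absence of an $x_ix_j$ cross-term shows that shifting mass from $j$ to $i$ leaves $\lambda$ exactly unchanged; your version reaches the same conclusion without the Lagrange-multiplier lemma, which makes it slightly more elementary and self-contained. The minor notational abuse of evaluating $\lambda(L_G(i),\vec{x})$ with the full $n$-vector $\vec{x}$ is harmless since $L_G(i)$ avoids $i$ and $j$.
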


%In  \cite{MS}, Motzkin and Straus determined the Lagrangian of any given $2$-graph.
%
%\begin{theo} {\em (\cite{MS})} \label{MStheo}
%If $G$ is a $2$-graph in which a maximum complete subgraph has  $t$ vertices, then
%$\lambda(G)=\lambda(K_t^2)={1 \over 2}(1 - {1 \over t})$.
%\end{theo}

Let $G$ be an $r$-graph on $[n]$ and $\vec{x}=(x_1,x_2,\dots,x_n)$ be a weight vector on $G$.
If we view $\lambda(G,\vec{x})$ as a function in variables $x_1,\dots, x_n$, then
$$ \frac{\partial \lambda (G, \vec{x})}{\partial x_i}=\sum_{i \in e \in E(G)}\prod\limits_{j\in e\setminus \{i\}}x_{j}.$$

\begin{lemma} {\em (\cite{FR})}\label{lemma-partion}
Let $G$ be an $r$-graph on $[n]$. Let $\vec{x}=(x_1,x_2,\dots,x_n)$ be an optimum weight vector on  $G$. Then
$$ \frac{\partial \lambda (G, \vec{x})}{\partial x_i}=r\lambda(G)$$
for every $i \in [n]$ with $x_i>0$.
\end{lemma}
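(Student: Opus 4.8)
The plan is to combine the homogeneity of $\lambda(G,\cdot)$ with a first-order optimality (exchange) argument. Since every edge contributes a product of $r$ distinct variables, $\lambda(G,\vec{x})$ is a polynomial that is homogeneous of degree $r$ and linear in each variable separately; Euler's identity for homogeneous functions therefore gives $\sum_{i=1}^{n} x_i\,\frac{\partial \lambda(G,\vec{x})}{\partial x_i}=r\,\lambda(G,\vec{x})$ for all $\vec{x}$, and in particular $\sum_{i=1}^{n} x_i\,\frac{\partial \lambda(G,\vec{x})}{\partial x_i}=r\,\lambda(G)$ when $\vec{x}$ is an optimum weight vector.

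Next I would show that all partial derivatives at strictly positive coordinates agree. Fix $i,j\in[n]$ with $x_i>0$ and $x_j>0$. For $|\epsilon|$ sufficiently small, the vector $\vec{x}(\epsilon)$ obtained from $\vec{x}$ by replacing the entry $x_i$ with $x_i+\epsilon$ and the entry $x_j$ with $x_j-\epsilon$ still lies in $\Delta_n$: the coordinate sum is unchanged, and nonnegativity of these two entries persists for small $|\epsilon|$ because $x_i,x_j>0$. Hence $\phi(\epsilon):=\lambda(G,\vec{x}(\epsilon))$ attains a local maximum at $\epsilon=0$, so $\phi'(0)=0$. Using the formula for the partial derivatives recorded just above the lemma, $\phi'(0)=\frac{\partial \lambda(G,\vec{x})}{\partial x_i}-\frac{\partial \lambda(G,\vec{x})}{\partial x_j}$, so these two partials are equal. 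Consequently there is a constant $c$ with $\frac{\partial \lambda(G,\vec{x})}{\partial x_i}=c$ for every $i\in[n]$ with $x_i>0$.

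Finally I would substitute this into Euler's identity. The terms with $x_i=0$ drop out, so $r\,\lambda(G)=\sum_{i:\,x_i>0} x_i\cdot c = c\sum_{i:\,x_i>0} x_i = c$, using $\sum_{i=1}^{n} x_i=1$. Thus $c=r\,\lambda(G)$, which is exactly the assertion. The only delicate point, and the one I would be careful about, is the feasibility check in the exchange step: moving weight in \emph{both} directions between two strictly positive coordinates is what forces \emph{equality} of the corresponding partials, whereas a coordinate with $x_i=0$ admits only one-sided perturbations and hence would yield only $\frac{\partial \lambda(G,\vec{x})}{\partial x_i}\le c$; such coordinates are (rightly) excluded from the statement. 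The remaining ingredients — differentiating a multilinear polynomial and invoking Euler's theorem — are routine.
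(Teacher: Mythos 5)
Your argument is correct and is the standard proof of this fact; the paper itself cites it from Frankl and R\"odl without reproducing the proof. You use the two usual ingredients: (1) a two-coordinate exchange perturbation to show that the partials at strictly positive coordinates all coincide (a Lagrange-multiplier/KKT argument made explicit), and (2) Euler's identity for the degree-$r$ homogeneous multilinear form $\lambda(G,\cdot)$, together with $\sum_i x_i=1$, to identify the common value as $r\lambda(G)$. The feasibility check and the caveat about one-sided perturbations at zero coordinates are exactly the right points to be careful about, and you handle them correctly.
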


\begin{lemma}\label{Equivalent}
Let $G$ be an $r$-graph on $[n]$. Let $\vec{x}=(x_1,x_2,\dots,x_n)$ be a feasible weight vector on $G$. Let $i,j\in [n]$, where $i\neq j$.
Suppose that $L_G(i \setminus j)=L_G(j \setminus i)=\emptyset$. Let
$\vec{y}=(y_1,y_2,\dots,y_n)$ be defined by letting $y_\ell=x_\ell$ for every $\ell \in [n]\setminus \{i,j\}$ and letting $y_i=y_j=(x_i+x_j)/2$.
Then $\lambda(G,\vec{y})\geq \lambda(G,\vec{x})$. Furthermore, if the pair $\{i,j\}$ is contained in some edge of $G$ and $\lambda(G,\vec{y})=\lambda(G,\vec{x})$, then $x_i=x_j$.
\end{lemma}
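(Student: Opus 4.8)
\medskip
\noindent\textbf{Proof plan.} The plan is to split the edges of $G$ according to how they meet the pair $\{i,j\}$ and to use the $i$--$j$ symmetry forced by the hypothesis. First I would observe that $L_G(i\setminus j)=L_G(j\setminus i)=\emptyset$ says exactly that for every $(r-1)$-element set $f\subseteq V(G)\setminus\{i,j\}$ one has $f\cup\{i\}\in E(G)$ if and only if $f\cup\{j\}\in E(G)$; equivalently, the edges of $G$ that meet $\{i,j\}$ in exactly one vertex occur in twin pairs $f\cup\{i\},\ f\cup\{j\}$. Partitioning $E(G)$ into the edges $e$ with $e\cap\{i,j\}=\emptyset$, those with $|e\cap\{i,j\}|=1$, and those with $\{i,j\}\subseteq e$, then summing $\prod_{k\in e}x_k$ over each part and pulling out the factors $x_i$ and $x_j$, I obtain
$$\lambda(G,\vec{x})=A+(x_i+x_j)B+x_ix_jC,$$
where $A$ is the contribution of the edges avoiding $\{i,j\}$, $B=\sum_f\prod_{k\in f}x_k$ ranges over the $(r-1)$-element sets $f$ with $f\cup\{i\}\in E(G)$ (equivalently $f\cup\{j\}\in E(G)$), and $C=\sum_g\prod_{k\in g}x_k$ ranges over the $(r-2)$-element sets $g$ with $\{i,j\}\cup g\in E(G)$. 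The key point is that $A,B,C\ge 0$ and depend only on the coordinates $x_\ell$ with $\ell\notin\{i,j\}$, hence are unchanged when $\vec{x}$ is replaced by $\vec{y}$.

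Next I would substitute $\vec{y}$. Since $y_i+y_j=x_i+x_j$ while $y_iy_j=(x_i+x_j)^2/4$, this yields
$$\lambda(G,\vec{y})-\lambda(G,\vec{x})=C\big((x_i+x_j)^2/4-x_ix_j\big)=\frac{C}{4}(x_i-x_j)^2\ge 0,$$
which is the first assertion. (Conceptually: with the sum $x_i+x_j$ held fixed, $\lambda(G,\cdot)$ is a downward parabola in $x_i$ with leading coefficient $-C$, maximised precisely at $x_i=x_j$.)

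The ``furthermore'' part is where the only genuine care is needed. Equality above only forces $C(x_i-x_j)^2=0$, i.e.\ $x_i=x_j$ \emph{or} $C=0$. To rule out the second alternative one uses that $\{i,j\}$ lies in some edge $\{i,j\}\cup g_0\in E(G)$: if the vertices of $g_0$ all carry positive weight, then $C\ge\prod_{k\in g_0}x_k>0$, and hence $x_i=x_j$. Such positivity is automatic in the way this lemma is applied (to optimum weight vectors of dense graphs, all of whose coordinates are positive), so I would either add the hypothesis that $\vec{x}$ has all coordinates positive to the ``furthermore'' clause or invoke it only in that setting --- without some such condition the clause can fail, e.g.\ for $G=K_3^3$ on $\{1,2,3\}$ with $\vec{x}=(1/4,3/4,0)$ and $\{i,j\}=\{1,2\}$. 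Apart from this point I foresee no obstacle: the decomposition $\lambda(G,\vec{x})=A+(x_i+x_j)B+x_ix_jC$ is forced as soon as the twin-pair structure is noted, and the rest is a one-line computation.
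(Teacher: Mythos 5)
Your proof is correct and follows essentially the same approach as the paper: the paper writes the difference directly as $\lambda(G,\vec{y})-\lambda(G,\vec{x})=\sum_{\{i,j\}\subseteq e\in G}\bigl[\tfrac{(x_i+x_j)^2}{4}-x_ix_j\bigr]\prod_{k\in e\setminus\{i,j\}}x_k$, which is precisely your $\tfrac{C}{4}(x_i-x_j)^2$ after factoring; the twin-pair cancellation of the singly-incident edges is used in both.

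Your caveat about the \emph{furthermore} clause is a genuine and worthwhile observation. The paper's proof concludes, without further comment, that ``if the pair $\{i,j\}$ is contained in some edge of $G$, then equality holds only if $x_i=x_j$,'' but this implicitly assumes that the product $\prod_{k\in e\setminus\{i,j\}}x_k$ is strictly positive for some edge $e\supseteq\{i,j\}$, i.e.\ that your $C>0$. Your counterexample ($G=K_3^3$ on $\{1,2,3\}$, $\vec{x}=(1/4,3/4,0)$, $\{i,j\}=\{1,2\}$) shows the clause fails without such a positivity hypothesis. In practice the lemma is invoked in the paper only to produce an optimum weight vector with equal coordinates on symmetric vertices (which needs only the inequality part), or on dense graphs whose optimum weights are all positive, so no downstream result is affected --- but the lemma as stated should include a positivity assumption or restrict to optimum vectors with full support, exactly as you suggest.
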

\begin{proof}
Since $L_G(i \setminus j)=L_G(j \setminus i)=\emptyset$, we have
$$\lambda(G,\vec{y})-\lambda(G,\vec{x})=\sum_{\{i,j\} \subseteq e \in G}\left[{(x_i+x_j)^2 \over 4}-x_ix_j\right]\prod\limits_{k\in e\setminus \{i,j\}}x_k \ge 0.$$
If the pair $\{i,j\}$ is contained in some edge of $G$, then equality holds only if $x_i=x_j$.
\end{proof}

The following facts are consequences of Lemma \ref{Equivalent}.
\begin{fact}\label{K_t^r}
$\lambda(K_t^{r})={t\choose r}{1\over t^r}.$
\end{fact}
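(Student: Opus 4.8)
The plan is to deduce this directly from Lemma \ref{Equivalent}, by showing that the uniform weight vector $(1/t,\dots,1/t)$ is an optimum weight vector on $K_t^r$. First I would dispose of the degenerate range: if $t<r$ then $K_t^r$ has no edges, so $\lambda(K_t^r)=0={t\choose r}\frac{1}{t^r}$ (here ${t\choose r}=0$); hence from now on we may assume $t\ge r\ge 2$.

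Next, pick any optimum weight vector $\vec{x}=(x_1,\dots,x_t)$ on $K_t^r$; such a vector exists since $\Delta_t$ is compact and $\lambda(K_t^r,\cdot)$ is continuous. The key point is that for every pair $i\ne j$ in $[t]$ we have $L_{K_t^r}(i\setminus j)=L_{K_t^r}(j\setminus i)=\emptyset$: because $K_t^r$ is complete, every $(r-1)$-subset $f$ of $[t]\setminus\{i,j\}$ satisfies $f\cup\{i\}\in E(K_t^r)$ and $f\cup\{j\}\in E(K_t^r)$ simultaneously, so neither $L_{K_t^r}(i\setminus j)$ nor $L_{K_t^r}(j\setminus i)$ can contain anything. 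Thus Lemma \ref{Equivalent} applies to the pair $\{i,j\}$: letting $\vec{y}$ be obtained from $\vec{x}$ by replacing $x_i$ and $x_j$ by their common average, we get $\lambda(K_t^r,\vec{y})\ge\lambda(K_t^r,\vec{x})=\lambda(K_t^r)$, which forces $\lambda(K_t^r,\vec{y})=\lambda(K_t^r,\vec{x})$. Since $t\ge r$, the pair $\{i,j\}$ is contained in some edge of $K_t^r$, so the ``furthermore'' clause of Lemma \ref{Equivalent} yields $x_i=x_j$.

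Because $i$ and $j$ were arbitrary, all coordinates of $\vec{x}$ are equal, i.e.\ $\vec{x}=(1/t,\dots,1/t)$. Consequently
$$\lambda(K_t^r)=\lambda\bigl(K_t^r,(1/t,\dots,1/t)\bigr)=\sum_{e\in E(K_t^r)}\prod_{k\in e}\frac{1}{t}={t\choose r}\frac{1}{t^r},$$
which is the claimed identity.

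I do not expect any real obstacle here, since this is a short and direct consequence of the symmetrization lemma just proved. The only two points that need care are verifying the empty-link hypothesis of Lemma \ref{Equivalent} (which is immediate from the completeness of $K_t^r$) and treating the range $t<r$ separately so that both sides of the stated formula are interpreted correctly.
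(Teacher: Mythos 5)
Your proof is correct and follows exactly the route the paper intends: the paper simply states that Fact \ref{K_t^r} is a consequence of Lemma \ref{Equivalent}, and your symmetrization argument (all coordinates of an optimum weight vector must be equal, then evaluate at the uniform vector) is precisely that consequence made explicit. Your extra care with the degenerate case $t<r$ is fine but not needed in the paper's context.
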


Recall that $K^{r-}_t$ is the $r$-graph by removing one edge from  $K^{r}_t$.

\begin{fact}\label{K_4^3-}
$\lambda(K_4^{3-})={4\over 81}< 0.0494.$
\end{fact}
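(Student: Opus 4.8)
The plan is to compute $\lambda(K_4^{3-})$ directly from the explicit form of its Lagrangian function and then reduce to a one-variable optimization. Take $K_4^{3-}$ on vertex set $\{1,2,3,4\}$ with the missing edge being $\{2,3,4\}$, so its edges are $\{1,2,3\}$, $\{1,2,4\}$, $\{1,3,4\}$; that is, $K_4^{3-}$ is a single vertex $1$ together with a triangle on $\{2,3,4\}$. Hence for every weight vector $\vec{x}$,
$$\lambda(K_4^{3-},\vec{x}) = x_1\left(x_2x_3 + x_2x_4 + x_3x_4\right).$$

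First I would symmetrize the weights of the vertices $2,3,4$. For any two distinct $i,j \in \{2,3,4\}$ one checks straight from the edge set that $L(i\setminus j) = L(j\setminus i) = \emptyset$, so Lemma \ref{Equivalent} applies: starting from an optimum weight vector and replacing the weights of a pair in $\{2,3,4\}$ by their common average never decreases $\lambda$. Therefore there is an optimum weight vector with $x_2 = x_3 = x_4 = t$ and $x_1 = 1-3t$ for some $t \in [0,1/3]$, whence $\lambda(K_4^{3-}) = \max_{0\le t\le 1/3}\, 3t^2(1-3t)$.

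I would then finish by elementary calculus: $\frac{d}{dt}\bigl(3t^2 - 9t^3\bigr) = 6t - 27t^2 = 3t(2-9t)$, which vanishes at $t = 2/9 \in [0,1/3]$, the maximizer on the interval, giving $\lambda(K_4^{3-}) = 3\cdot\frac{4}{81}\cdot\frac{1}{3} = \frac{4}{81}$. Since $\frac{4}{81} = 0.04938\ldots < 0.0494$, this proves the Fact. (Alternatively one may bypass Lemma \ref{Equivalent}: for fixed $s = x_2+x_3+x_4$ the AM--GM-type inequality $x_2x_3+x_2x_4+x_3x_4 \le s^2/3$ yields $\lambda(K_4^{3-},\vec{x}) \le \tfrac13 s^2(1-s) \le \tfrac13\cdot\tfrac49\cdot\tfrac13 = \tfrac{4}{81}$, with equality at $\vec{x} = (\tfrac13,\tfrac29,\tfrac29,\tfrac29)$.)

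There is essentially no real obstacle here; the only point demanding a bit of care is verifying the hypothesis $L(i\setminus j) = L(j\setminus i) = \emptyset$ of Lemma \ref{Equivalent} for the three symmetric vertices (or, in the alternative route, spotting the substitution $s = x_2+x_3+x_4$). Everything else is a routine single-variable computation together with the numerical check $4/81 < 0.0494$.
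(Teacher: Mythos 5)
Your proof is correct and follows essentially the same approach as the paper: symmetrize the three triangle vertices via Lemma \ref{Equivalent} to reduce to maximizing $3(1-3t)t^2$ over $t\in[0,1/3]$, then solve the one-variable problem. The paper finishes this last step with an AM--GM bound rather than taking a derivative (and in fact its displayed AM--GM has a small typo, dividing the three-term sum by $2$ rather than $3$), but the reduction and the resulting value $4/81$ are identical; your alternative AM--GM route in the parenthetical is also a fine shortcut.
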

\begin{proof}
Let $\vec{x}=(x_1,x_2,x_3,x_4)$ be an optimum weighting of $K_4^{3-}$. By Lemma \ref{Equivalent}, we can assume that $x_1=a$ and $x_2=x_3=x_4=b$. So $a+3b=1$.
Then $\lambda(K_4^{3-})= 3ab^2=3(1-3b)b^2 \le {4\over 3}\left(1-3b+1.5b+1.5b\over 2\right)^3= {4\over 81}< 0.0494.$
\end{proof}

\begin{fact}\label{K_6^3-}
$\lambda(K_6^{3-})< 0.0887.$
\end{fact}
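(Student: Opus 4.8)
The plan is to mimic the strategy used for Fact~\ref{K_4^3-}: exploit the symmetry of $K_6^{3-}$ via Lemma~\ref{Equivalent} to reduce the six-variable optimization to a two-variable one, and then bound the resulting polynomial by AM--GM. First I would fix the missing edge of $K_6^{3-}$ to be $\{5,6\}$, so that vertices $1,2,3,4$ play symmetric roles and vertices $5,6$ play symmetric roles. Let $\vec{x}=(x_1,\dots,x_6)$ be an optimum weight vector. Any two of the vertices $1,2,3,4$ have the property that $L_G(i\setminus j)=L_G(j\setminus i)=\emptyset$ (since every $3$-set meeting both is an edge), and likewise for the pair $\{5,6\}$; so by repeated application of Lemma~\ref{Equivalent} we may assume $x_1=x_2=x_3=x_4=a$ and $x_5=x_6=b$, with $4a+2b=1$.

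Next I would write $\lambda(K_6^{3-},\vec{x})$ explicitly in terms of $a,b$. The edges of $K_6^{3-}$ split into: the $\binom{4}{3}=4$ triples inside $\{1,2,3,4\}$, contributing $4a^3$; the triples with exactly two vertices in $\{1,2,3,4\}$ and one in $\{5,6\}$, of which there are $\binom{4}{2}\cdot 2=12$, contributing $12a^2b$; and the triples with exactly one vertex in $\{1,2,3,4\}$ and two in $\{5,6\}$, of which there are $4\cdot 1=4$ (the pair $\{5,6\}$ together with any one of $1,2,3,4$), contributing $4ab^2$. The triple $\{4,5,6\}$-type with the nonexistent edge $\{5,6\}\cup\emptyset$ — i.e. the would-be edge using both $5$ and $6$ and no vertex of $[4]$ — does not exist since $|e|=3$ forces a third vertex anyway, so there is nothing extra to exclude beyond what is already accounted for; the only genuinely missing edge is $\{5,6,\,\cdot\,\}$? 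No: the missing edge is the single triple... here I should double-check: $K_6^3$ has $\binom{6}{3}=20$ edges, and $4+12+4=20$, so in fact removing one edge means subtracting one term. If the removed edge lies inside $\{1,2,3,4\}$ the count becomes $3a^3+12a^2b+4ab^2$; choosing it so keeps the cleanest symmetry. Thus $\lambda(K_6^{3-})=\max\{3a^3+12a^2b+4ab^2 : 4a+2b=1,\ a,b\ge 0\}$.

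Then I would substitute $b=(1-4a)/2$ and maximize the single-variable cubic $g(a)=3a^3+12a^2\cdot\frac{1-4a}{2}+4a\cdot\frac{(1-4a)^2}{4}$ over $a\in[0,1/4]$, either by differentiating and locating the critical point, or — in the spirit of the proof of Fact~\ref{K_4^3-} — by writing the product as a constant times a product of three nonnegative linear forms summing to a constant and applying AM--GM to get a clean closed bound below $0.0887$. I expect the main (though still routine) obstacle to be bookkeeping: making sure the edge counts $4,12,4$ are right and that the removed edge is placed to preserve the most symmetry, so that Lemma~\ref{Equivalent} really does collapse the problem to two variables; once that is set up correctly, the remaining calculus or AM--GM estimate is elementary and the numerical bound $0.0887$ follows.
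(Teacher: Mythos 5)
Your setup has a genuine error that invalidates the reduction to two variables. You begin by declaring the removed edge to be $\{5,6\}$, but in a $3$-uniform hypergraph an edge has three vertices, so this is not a well-formed choice. When you notice this and switch to removing a triple inside $\{1,2,3,4\}$ (say $\{2,3,4\}$), the symmetry you rely on collapses: vertex $1$ is in every triple of the clique while vertices $2,3,4$ each miss one, so $1$ and $2$ (for example) are no longer equivalent. Concretely, $L_G(2\setminus 1)$ contains $\{3,4\}$ because $\{2,3,4\}\notin E$ while $\{1,3,4\}\in E$, so the hypothesis $L_G(i\setminus j)=L_G(j\setminus i)=\emptyset$ of Lemma~\ref{Equivalent} fails for the pair $(1,2)$. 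You therefore cannot conclude $x_1=x_2=x_3=x_4$. (Indeed, maximizing your restricted expression $3a^3+12a^2b+4ab^2$ subject to $4a+2b=1$ yields only about $0.0884$, which is strictly below the true value $\lambda(K_6^{3-})=f\bigl((3-\sqrt6)/3\bigr)\approx 0.08866$, so your constrained optimum actually undershoots the Lagrangian rather than bounding it from above.)

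The paper avoids this by choosing the removed edge to respect a $3$-$3$ vertex partition. With the missing edge equal to $\{4,5,6\}$, the automorphism group acts transitively on $\{1,2,3\}$ (none of which lie in the missing edge) and on $\{4,5,6\}$ (each of which does), and one checks that $L_G(i\setminus j)=L_G(j\setminus i)=\emptyset$ whenever $i,j$ lie in the same block. Lemma~\ref{Equivalent} then legitimately gives $x_1=x_2=x_3=a$ and $x_4=x_5=x_6=b$ with $3a+3b=1$, and the edge count decomposes as $a^3+9a^2b+9ab^2=a^3-3a^2+a=:f(a)$, a concave function on $[0,1/3]$ whose maximum is at $a=(3-\sqrt6)/3$, giving $\lambda(K_6^{3-})<0.0887$. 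To repair your argument you should redo the symmetry reduction with this $3$-$3$ split; the subsequent single-variable calculus (or an AM--GM estimate, if you prefer) is then routine.
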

\begin{proof}
Let $\vec{x}=(x_1,\dots,x_6)$ be an optimum weighting of $K_6^{3-}$. By Lemma \ref{Equivalent}, we can assume that $x_1=x_2=x_3=a$ and $x_4=x_5=x_6=b$. So $3a+3b=1$.
Then $\lambda(K_6^{3-})= a^3+9a^2b+9ab^2=a^3+3a^2(1-3a)+a(1-3a)^2=a^3-3a^2+a.$
Let $f(a)=a^3-3a^2+a$, we have $f'(a)=3a^2-6a+1$ and $f''(a)=6a-6$.
Since $f''(a)<0$, $f(a)$ reaches the maximaum on interval $[0, 1/3]$ at $a$ satisfying $f'(a)=0$.
Then direct calculation $f(a)\le f({3-\sqrt{6} \over 3})< 0.0887.$
\end{proof}

\begin{fact}\label{K_8^3-}
$\lambda(K_8^{3-})< 0.1077.$
\end{fact}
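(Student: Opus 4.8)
The plan is to mimic exactly the argument used for $\lambda(K_6^{3-})$ in Fact~\ref{K_6^3-}, since the structure is identical. First I would let $\vec{x}=(x_1,\dots,x_8)$ be an optimum weighting of $K_8^{3-}$, where the missing edge is, say, $\{6,7,8\}$ (equivalently, vertices $6,7,8$ form an independent triple while all other triples are edges). By Lemma~\ref{Equivalent}, since the two ``sides'' $\{1,2,3,4,5\}$ and $\{6,7,8\}$ are each internally symmetric (any two vertices on the same side have the same link after deleting the pair), I can assume $x_1=\dots=x_5=a$ and $x_6=x_7=x_8=b$, with the normalization $5a+3b=1$.

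Next I would write $\lambda(K_8^{3-},\vec{x})$ explicitly by counting the edge types: $\binom{5}{3}a^3$ edges inside the $5$-set, $\binom{5}{2}\cdot 3\, a^2 b$ edges with two vertices on the big side, $5\cdot\binom{3}{2}\, a b^2$ edges with one vertex on the big side, and $0$ inside the $3$-set. This gives $\lambda = 10a^3 + 30a^2 b + 15 a b^2$. Substituting $b=(1-5a)/3$ turns this into a single-variable polynomial $g(a)$ on the interval $[0,1/5]$ (or on the subinterval where $b\ge 0$, i.e. $a\le 1/5$). Then, just as in Fact~\ref{K_6^3-}, I would compute $g'(a)$ and $g''(a)$, check the sign of the second derivative (I expect $g$ to be concave on the relevant range, or at least concave near the critical point), locate the critical point $a_0$ solving $g'(a_0)=0$, and evaluate $g(a_0)$ to verify it is less than $0.1077$.

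The only real obstacle is the arithmetic: the critical point $a_0$ will be a root of a quadratic and hence involve a surd, and one must check numerically that $g(a_0)<0.1077$ with enough margin. If the concavity check fails on part of $[0,1/5]$, I would instead just compare $g(a_0)$ against the endpoint values $g(0)=0$ and $g(1/5)=\lambda(K_5^3)=\binom{5}{3}/5^3=10/125=2/25=0.08$ and any interior critical points, taking the maximum. A sanity check: the bound should exceed $\lambda(K_7^3)=\binom{7}{3}/7^3=35/343\approx 0.1020$ (since $K_7^3\subseteq K_8^{3-}$ and Fact~\ref{mono} applies), and indeed $0.1020<0.1077$, which is consistent. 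So the proof will read: set up the weighting via Lemma~\ref{Equivalent}, reduce to the one-variable polynomial $g(a)=10a^3+30a^2b+15ab^2$ with $5a+3b=1$, and finish by elementary calculus exactly as in the proof of Fact~\ref{K_6^3-}.
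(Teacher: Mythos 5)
Your proposal is correct and matches the paper's proof essentially line for line: symmetrize the weights via Lemma~\ref{Equivalent} to $x_1=\dots=x_5=a$, $x_6=x_7=x_8=b$ with $5a+3b=1$, obtain $\lambda=10a^3+30a^2b+15ab^2=(5a^3-20a^2+5a)/3$, and maximize by locating the critical point $a_0=(4-\sqrt{13})/3$. The paper skips the explicit concavity check and simply evaluates $f(a_0)<0.1077$; note the margin is razor-thin ($f(a_0)\approx 0.10767$), so your remark about needing careful arithmetic is well taken.
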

\begin{proof}
Let $\vec{x}=(x_1,\dots,x_8)$ be an optimum weighting of $K_8^{3-}$. By Lemma \ref{Equivalent}, we can assume that $x_1=\dots=x_5=a$ and $x_6=x_7=x_8=b$. So $5a+3b=1$.
Then $\lambda(K_8^{3-})= 10a^3+30a^2b+15ab^2=(5a^3-20a^2+5a)/3.$

Let $f(a)=(5a^3-20a^2+5a)/3$.
Then $f'(a)=(15a^2-40a+5)/3$ and $f'(a)=0$ implies that $a=(4\pm \sqrt{13})/3$.
It's easy to see that $\max_{0<a<1} f(a)=f((4- \sqrt{13})/3)<0.1077$.
\end{proof}

The following result in \cite{PZ} is useful for determining the Lagrangian of some hypergraph containing a large clique.
\begin{theo}{\rm ( \cite{PZ})} \label{theoPZ}
Let $m$ and $l$ be positive integers satisfying ${l-1 \choose 3} \le m \le {l-1 \choose 3} + {l-2 \choose 2}$. Let $G$ be a $3$-graph with $m$ edges and $G$ contains a complete subgraph of order  $l-1$. Then $\lambda(G) = \lambda([l-1]^{(3)})$.
\end{theo}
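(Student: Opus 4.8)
The inequality $\lambda(G)\ge\lambda([l-1]^{(3)})$ is immediate from Fact~\ref{mono}, since $[l-1]^{(3)}=K_{l-1}^3\subseteq G$. The whole content is the reverse inequality $\lambda(G)\le\lambda([l-1]^{(3)})$, which I would prove by contradiction --- assume $\lambda(G)>\lambda([l-1]^{(3)})$ --- running a strong induction on $m$ in the background.

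\emph{Reductions.} Let $\vec x$ be an optimum weight vector and replace $G$ by the subgraph induced on the support of $\vec x$; this preserves $\lambda(G)$ and makes $G$ dense, hence a graph that covers pairs by Fact~\ref{dense}. If this discards a vertex of the clique, then the ${l-2 \choose 2}$ clique-edges through it are lost, the new $G$ is dense with at most ${l-1 \choose 3}$ edges, and one finishes by a separate bound on the Lagrangian of dense $3$-graphs with at most ${l-1 \choose 3}$ edges (a Frankl--F\"uredi-type estimate available in this range, e.g.\ via Talbot~\cite{T}); otherwise $G$ still contains $[l-1]^{(3)}$, it has $m'$ edges with ${l-1 \choose 3}\le m'\le m$, and if $m'<m$ the induction hypothesis finishes. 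So we may assume $G$ is dense; then by the standard compression argument built on Fact~\ref{compression-preserve} we may further assume $G$ is left-compressed --- one checks that a single compression $\pi_{ij}$ preserves the number of edges and carries an order-$(l-1)$ clique to one on $[l-1]$, so that $G[[l-1]]=[l-1]^{(3)}$. Finally, since the ${l-1 \choose 3}$ clique-edges cover only pairs inside $[l-1]$, the remaining ${n \choose 2}-{l-1 \choose 2}$ pairs of $[n]=V(G)$ must be covered by the at most ${l-2 \choose 2}$ non-clique edges, each covering at most three pairs; this bounds $n$ by a function of $l$.

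\emph{The core argument.} Put $B=[n]\setminus[l-1]$ and write $s=\sum_{v\in[l-1]}x_v$. Summing the stationarity conditions of Lemma~\ref{lemma-partion} over $B$ gives the identity $\sum_{e\not\subseteq[l-1]}|e\cap B|\prod_{i\in e}x_i=3(1-s)\lambda(G)$; combined with the splitting $\lambda(G)=\lambda([l-1]^{(3)},\vec x)+\sum_{e\not\subseteq[l-1]}\prod_{i\in e}x_i$ (the first term being the Lagrangian of the clique evaluated at $\vec x$ restricted to $[l-1]$) and the bound $\lambda([l-1]^{(3)},\vec x)\le s^3\lambda([l-1]^{(3)})$, this already yields a contradiction when $B$ carries no weight and, more generally, when every non-clique edge lies inside $B$ (then each $|e\cap B|=3$, the identity becomes $\sum_{e\not\subseteq[l-1]}\prod_{i\in e}x_i=(1-s)\lambda(G)$, whence $s\lambda(G)=\lambda([l-1]^{(3)},\vec x)\le s^3\lambda([l-1]^{(3)})$ and $\lambda(G)\le s^2\lambda([l-1]^{(3)})\le\lambda([l-1]^{(3)})$). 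What remains is to handle non-clique edges meeting $[l-1]$: here left-compressedness pins such edges into rigid families --- for instance the single edge $\{1,l-1,l\}$ already forces all of $\{1,c,l\}$ with $2\le c\le l-1$ --- and one classifies the admissible families under the two constraints ``at most ${l-2 \choose 2}$ non-clique edges'' and ``$G$ covers pairs'', and then for each family resolves $\lambda(G)$ by symmetrizing through Lemma~\ref{Equivalent} down to a weighting of $[l-1]^{(3)}$ itself, in the spirit of the computations in Facts~\ref{K_4^3-}--\ref{K_8^3-}. I expect this final classification-and-optimization step to be the main obstacle: it is where the hypothesis $m\le{l-1 \choose 3}+{l-2 \choose 2}$ is used in full, and where --- if one prefers --- the known cases of the Frankl--F\"uredi conjecture (Talbot~\cite{T}) on the colex $3$-graphs $C_m$ can be substituted.
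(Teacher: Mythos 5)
This statement is not proved in the paper: it is Theorem~\ref{theoPZ}, quoted verbatim from Peng and Zhao~\cite{PZ} and used as a black box in the proof of Theorem~\ref{theoremP_3P_4}. So there is no ``paper's own proof'' to compare against, and I can only assess your sketch on its merits.

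Your outline starts well --- the lower bound via Fact~\ref{mono}, the reduction to a dense supporting subgraph, the left-compression step (your observation that a single $\pi_{ij}$ preserves the edge count and that a left-compressed graph with an $(l-1)$-clique must have $[l-1]^{(3)}$ as a subgraph are both correct), and the stationarity identity $\sum_{e\not\subseteq[l-1]}|e\cap B|\prod_{i\in e}x_i=3(1-s)\lambda(G)$ together with the clean computation in the case where all non-clique edges lie inside $B$. But there are two genuine gaps. First, the reduction step for a discarded clique vertex defers to ``a Frankl--F\"uredi-type estimate via Talbot~\cite{T}'': Talbot's theorem only covers $m$ up to roughly ${l-1\choose 3}+{l-2\choose 2}-(l-1)$ and is about colex extremality, so it does not obviously supply the bound $\lambda(G')\le\lambda([l-1]^{(3)})$ for every dense $3$-graph with at most ${l-1\choose 3}$ edges; this step needs its own argument (indeed the whole point of the Peng--Zhao theorem is to go beyond Talbot's range by exploiting the clique hypothesis). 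Second, and more seriously, you explicitly leave the core case --- non-clique edges meeting $[l-1]$ --- as a ``classification-and-optimization'' that you expect to be the main obstacle. That case is where the hypothesis $m\le{l-1\choose 3}+{l-2\choose 2}$ actually does its work and where all of the content of the theorem resides, so as written the proposal is a plausible strategy but not a proof. To complete it you would need, at minimum, to show that in a left-compressed optimum the extra edges can all be taken to contain the single vertex $l$, and then carry out the weight-shifting to $[l-1]$ via Lemma~\ref{Equivalent} in closed form.
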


\section{The Lagrangian densities of $P_3$ and $P_4$}

\begin{figure}[H]
\centering
\begin{minipage}{7cm}
\includegraphics[width=0.9\textwidth, height=0.08\textwidth]{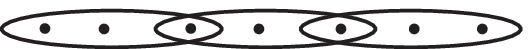}
\label{fig P3}
\caption{$P_3$}
\end{minipage}
\begin{minipage}{7cm}
\includegraphics[width=0.9\textwidth, height=0.08\textwidth]{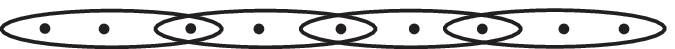}
\label{fig P4}
\caption{$P_4$}
\end{minipage}
\end{figure}

We first study a property of dense $3$-graphs.
\begin{lemma} \label{p_2'}
Let $i=1$ or $2$.
Let $\mathcal{F}$ be a dense $3$-graph with $ n\ge 6-i$ vertices.
Then there are $e_1,e_2 \in \mathcal{F}$ such that $|e_1 \cap e_2|=i$.
\end{lemma}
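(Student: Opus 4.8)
The plan is to treat $i=2$ and $i=1$ separately, using in both cases that a dense $3$-graph covers pairs (Fact \ref{dense}): hence every vertex $v$ lies in at least $\lceil(n-1)/2\rceil\ge 2$ edges (the $n-1$ pairs through $v$ are covered by edges through $v$, each of which absorbs two of them), and $\mathcal{F}$ has at least one edge.

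For $i=2$, suppose for contradiction that no two edges of $\mathcal{F}$ meet in exactly two vertices; then $\mathcal{F}$ is linear. The key step is the auxiliary claim that every linear $3$-graph $G$ satisfies $\lambda(G)\le\lambda(K_3^3)=1/27$. To prove it, pass to the subgraph induced on the support of an optimal weight vector $\vec{x}$ for $G$ (still linear, with $\vec{x}$ still optimal and now strictly positive), let $v$ be a vertex of maximum weight $m$, and apply Lemma \ref{lemma-partion} at $v$ to get $3\lambda(G)=\sum_{\{j,k\}\in L_G(v)}x_jx_k$. Linearity forces $L_G(v)$ to be a matching (two of its edges sharing a vertex would give two edges of $G$ meeting in two vertices), so, writing $s_{jk}=x_j+x_k$ over the pairs $\{j,k\}\in L_G(v)$, these values are supported on pairwise disjoint subsets of $V(G)\setminus\{v\}$; thus $\sum s_{jk}\le 1-m$ and each $s_{jk}\le\min(2m,1-m)$, whence
$$3\lambda(G)\le \frac14\sum s_{jk}^2\le \frac14(\max s_{jk})\sum s_{jk}\le \frac14\min(2m,1-m)(1-m),$$
which is at most $1/9$ in each of the cases $m\le\frac13$ and $m\ge\frac13$. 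Granting the claim, any single edge of $\mathcal{F}$ spans a subgraph on $3<n$ vertices with Lagrangian $1/27\ge\lambda(\mathcal{F})$, contradicting that $\mathcal{F}$ is dense.

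For $i=1$, suppose no two edges of $\mathcal{F}$ meet in exactly one vertex, so any two distinct edges meet in $0$ or $2$ vertices. Fix a vertex $v$; its link $L(v)$ has at least two edges, any two of which intersect (two disjoint edges of $L(v)$ would give two edges of $\mathcal{F}$ meeting only in $v$). A pairwise-intersecting family of $2$-element sets is a star or a triangle. If $L(v)=\{ab,bc,ca\}$ is a triangle, then $\deg(v)=3$ and, since $n\ge 5$, some vertex $w\notin\{a,b,c\}$ has its pair with $v$ uncovered -- contradiction. If $L(v)$ is a star with centre $u$, then every edge through $v$ contains $u$; covering the pairs $\{v,w\}$ forces the edges through $v$ to be exactly $\{v,u,a_1\},\dots,\{v,u,a_{n-2}\}$ with $\{a_1,\dots,a_{n-2}\}=V(\mathcal{F})\setminus\{v,u\}$, and then covering each pair $\{a_i,a_j\}$ forces $\{u,a_i,a_j\}\in\mathcal{F}$ (its only alternative, $\{v,a_i,a_j\}$, would be an edge through $v$ missing $u$). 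But now $L(u)$ contains all six pairs among $\{v,a_1,a_2,a_3\}$, so the disjoint edges $va_1$ and $a_2a_3$ of $L(u)$ yield edges $\{u,v,a_1\},\{u,a_2,a_3\}\in\mathcal{F}$ meeting only in $u$ -- contradiction.

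The main obstacle I anticipate is the auxiliary bound $\lambda(G)\le 1/27$ for linear $3$-graphs: getting the constant exactly right, rather than a weaker bound like $1/24$ or $1/12$, is what forces the split on the size of the maximum weight, and the entire $i=2$ case rests on this inequality being tight. The $i=1$ argument is elementary, but one must check the small case $n=5$ (both subcases use up five vertices), which is precisely why the hypothesis is $n\ge 6-i$ and not smaller -- indeed $K_4^{3-}$ is a dense $3$-graph on four vertices with no two edges meeting in a single vertex.
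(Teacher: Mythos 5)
Your proof is correct, and it takes a genuinely different route from the paper's, most notably in the $i=2$ case. The paper also assumes for contradiction that $\mathcal{F}$ is linear, applies Lemma \ref{lemma-partion} with $r=3$, but then sums $\partial\lambda/\partial x_k = 3\lambda(\mathcal{F})$ over \emph{all} $n$ vertices: linearity guarantees each monomial $x_ix_j$ appears in at most one partial, so $3n\lambda(\mathcal{F})\le \sum_{i<j}x_ix_j\le (n-1)/(2n)$, giving $\lambda(\mathcal{F})\le (n-1)/(6n^2)\le 1/32$ for $n\ge 4$, and then $1/27>1/32$ contradicts $\mathcal{F}$ containing an edge. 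You instead localize at the link of a single max-weight vertex, observe that linearity forces this link to be a matching, and extract from that the exact extremal fact $\lambda(G)\le 1/27$ for every linear $3$-graph $G$; the contradiction with density then follows at once. The paper's version is shorter and requires no case split on $m$; yours gives the tight constant and a self-contained statement about linear $3$-graphs that is of some independent interest. For $i=1$, the paper gives a very brief covering argument (two edges $e,f$, pick $a\in e\setminus f$, $b\in f\setminus e$, cover $\{a,b\}$ by $g$), whereas you classify the link of a vertex as a star or a triangle and push both structures to contradictions; your version is longer but handles the case $|e\cap f|=2$ with $g$'s third vertex in $e\cap f$, which the paper's one-line dichotomy ``$|e\cap g|=1$ or $|f\cap g|=1$'' does not explicitly address.

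One small imprecision in your $i=1$ star case: when you argue that covering $\{a_i,a_j\}$ forces $\{u,a_i,a_j\}\in\mathcal{F}$, the parenthetical names $\{v,a_i,a_j\}$ as the ``only alternative.'' There is a third possibility, namely $\{a_i,a_j,a_k\}$ for some $k\ne i,j$; this must also be ruled out, which it is, because such an edge would meet $\{v,u,a_i\}\in\mathcal{F}$ in exactly the single vertex $a_i$, contradicting the standing assumption. The conclusion is unaffected, but the justification should be completed.
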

\begin{proof}
By Fact \ref{dense}, $\mathcal{F}$ being dense implies that $\mathcal{F}$ covers pairs.
For $i=1$, since $n\ge 5$ and $\mathcal{F}$ covers pairs, it is easy to see that $\mathcal{F}$ has at least two edges.
Let $e,f\in \mathcal{F}$.
Let $a\in e\setminus f$ and $b\in f\setminus e$.  Since $\mathcal{F}$ covers pairs, there exists one an edge $g\in \mathcal{F}$ such that $a,b\in g$.
Hence either $|e\cap g|=1$ or $|f\cap g|=1$ (or both).

For $i=2$. Let $\vec{x}=(x_1,x_2,\dots,x_n)$ be an optimum weighting of $\mathcal{F}$. Note that $x_i>0$ for all $i\in [n]$.
Suppose for any edge pair $e_1,e_2 \in \mathcal{F}$, $|e_1 \cap e_2|=0$ or $1$.
This implies that for every pair $i,j\in V(\mathcal{F})$, $\{i,j\}$ is contained in at most one edge of $\mathcal{F}$.
Then for every pair $i,j\in V(\mathcal{F})$, $x_ix_j$ appears in at most one of  $\frac{\partial \lambda (\mathcal{F}, \vec{x})}{\partial x_i}$, $i\in [n]$.
Hence
$$ \sum_{i=1}^n \frac{\partial \lambda (\mathcal{F}, \vec{x})}{\partial x_i}\le \sum_{1\le i< j \le n} x_ix_j \le {n\choose 2}{1\over n^2}.$$
By Fact \ref{lemma-partion}, $ \frac{\partial \lambda (\mathcal{F}, \vec{x})}{\partial x_i}=3\lambda(\mathcal{F})$ for every $i\in [n]$.
Hence $3n\lambda(\mathcal{F})\le {n\choose 2}{1\over n^2}$, so $\lambda(\mathcal{F}) \le {n-1 \over 6n^2} \le {1 \over 32}$.
Note that $\mathcal{F}$ contains at least one edge, so by Fact \ref{mono}, $\lambda(\mathcal{F})\ge {1\over 27} > {1 \over 32}$, a contradiction.
\end{proof}

Denote $T_2=\{123,124\}$ and $F_5=\{123,124,345\}$. Note that $F_5=H^{T_2}$ and the following result in \cite{Sidorenko-89,BIJ} is a consequence of the above.
\begin{lemma}{\em (\cite{Sidorenko-89,BIJ})} \label{p_2}
Let $F\in \{P_2, T_2\}$ and $t=|V(F)|$.
 Then
$$\pi_{\lambda}(F)=3!\lambda(K_{t-1}^3).$$
Furthermore, for any $F$-free and $K_{t-1}^3$-free $3$-graph $G$, there exists $\epsilon=\epsilon(t)>0$ such that $\lambda(G)\le \lambda(K_{t-1}^3)-\epsilon$.
\end{lemma}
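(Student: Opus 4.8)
The plan is to reduce to \emph{dense} $F$-free $3$-graphs and then invoke Lemma~\ref{p_2'}, which at once forces such a graph to have at most $t-1$ vertices; after that the statement collapses to the already-computed Lagrangians of small cliques.

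For any $F$-free $3$-graph $G$ with $\lambda(G)>0$ I would pass to a vertex-minimal subgraph $G'\subseteq G$ with $\lambda(G')=\lambda(G)$; by Fact~\ref{mono} and minimality $G'$ is dense, and it is still $F$-free since a subgraph of an $F$-free $3$-graph is $F$-free (the case $\lambda(G)=0$ is trivial). So it suffices to bound $\lambda(G')$ for dense $F$-free $G'$. If $F=P_2$, then $t=5$ and $K_{t-1}^3=K_4^3$; were $|V(G')|\ge 5$, Lemma~\ref{p_2'} with $i=1$ would produce two edges of $G'$ meeting in exactly one vertex, i.e. a copy of $P_2\subseteq G'$, a contradiction, so $|V(G')|\le 4$, whence $G'\subseteq K_4^3$ and $\lambda(G')\le \lambda(K_4^3)$ by Fact~\ref{mono}. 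If $F=T_2$, then $t=4$ and $K_{t-1}^3=K_3^3$; were $|V(G')|\ge 4$, Lemma~\ref{p_2'} with $i=2$ would produce two edges meeting in exactly two vertices, i.e. a copy of $T_2\subseteq G'$, a contradiction, so $|V(G')|\le 3$ and $\lambda(G')\le \lambda(K_3^3)=1/27$. In both cases $K_{t-1}^3$ has fewer than $t$ vertices, hence is $F$-free, so together with the general lower bound $\pi_{\lambda}(F)\ge 3!\lambda(K_{t-1}^3)$ we obtain $\pi_{\lambda}(F)=3!\lambda(K_{t-1}^3)$.

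For the ``furthermore'' part I would additionally assume $G$ is $K_{t-1}^3$-free and pass to a dense subgraph $G'$ as above, which is again $F$-free and $K_{t-1}^3$-free. When $F=T_2$: a $K_3^3$-free $3$-graph has no edge, so $\lambda(G)=0$ and any $\epsilon(4)\le 1/27$ works. When $F=P_2$: from the above $|V(G')|\le 4$, and being $K_4^3$-free it then has at most three edges on four vertices, so $G'\subseteq K_4^{3-}$ (or $|V(G')|\le 3$); hence by Fact~\ref{mono}, Fact~\ref{K_4^3-} and Fact~\ref{K_t^r}, $\lambda(G)=\lambda(G')\le \max\{\lambda(K_4^{3-}),\lambda(K_3^3)\}=4/81<1/16=\lambda(K_4^3)$, so $\epsilon(5)=1/16-4/81$ works.

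I do not expect a genuine obstacle here: the content of the lemma is carried entirely by Lemma~\ref{p_2'} together with the Facts evaluating $\lambda(K_4^3)$ and $\lambda(K_4^{3-})$. The only steps needing a word of care are the passage to a dense subgraph — which is what licenses the bound on the number of vertices — and recording that $1/16-4/81$ is a positive constant depending only on $t$, so it serves as the required $\epsilon(t)$.
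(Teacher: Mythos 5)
Your proof is correct and follows essentially the same route as the paper: pass to a dense subgraph, invoke Lemma~\ref{p_2'} to cap the number of vertices at $t-1$, and then read off the Lagrangian bounds from $K_4^{3}$, $K_4^{3-}$ (with $\epsilon = 1/16 - 4/81 = 17/1296$) and the empty graph for $t=4$. The only difference is presentational: you split the two cases $F=P_2$ and $F=T_2$ explicitly, whereas the paper handles them uniformly in terms of $t$.
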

\begin{proof}
Let $F\in \{P_2, T_2\}$ and $t=|V(F)|$. Since $K_{t-1}^3$ is $F$-free, we have $\pi_{\lambda}(F)\ge 3!\lambda(K_{t-1}^3).$
Let $G$ be $F$-free and let $G^*$ be a subgraph of $G$ such that $G^*$ is dense and $\lambda(G^*)= \lambda(G)$.
By Lemma \ref{p_2'}, $|V(G)|\le t-1$ and hence $\lambda(G^*)\le \lambda(K_{t-1}^3)$.
Thus $\pi_{\lambda}(F)=3!\lambda(K_{t-1}^3).$
Furthermore, if $G$ is also $ K_{t-1}^3$-free, then $G^*$ is a copy of some subgraph of $K_{t-1}^{3-}$.
Note that $K_{3}^{3-}=\emptyset$.
Hence $\lambda(G^*)\le \lambda(K_{4}^3)-\epsilon$ for $t=5$, where $\epsilon=\lambda(K_{4}^3)-\lambda(K_{4}^{3-})={1\over 16}-{4\over 81}={17\over 1296}$, and $\lambda(G^*)=0$ for $t=4$.
\end{proof}

\subsection{Left-compressing $P_3$-free $3$-graphs covering pairs}

\begin{lemma} \label{lemmaP_3}
Let $\mathcal{F}$ be a $P_3$-free $3$-graph with $ n\ge 6$ vertices that covers pairs. Let $i,j\in V(\mathcal{F})$.
Then $\pi_{ij}(\mathcal{F})$ is also $P_3$-free. Furthermore, if $\mathcal{F}$ is also $K^3_{6}$-free, then $\pi_{ij}(\mathcal{F})$ is $K^3_{6}$-free.
\end{lemma}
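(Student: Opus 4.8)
The plan is to argue by contradiction: suppose $\pi_{ij}(\mathcal{F})$ contains a copy of $P_3$, say with edges $f_1, f_2, f_3$ where $|f_1 \cap f_2| = |f_2 \cap f_3| = 1$ and $f_1 \cap f_3 = \emptyset$. Since $\mathcal{F}$ itself is $P_3$-free, at least one of $f_1, f_2, f_3$ must be a "new" edge, i.e. of the form $f \cup \{i\}$ where $f \cup \{j\} \in \mathcal{F}$ and $f \cup \{i\} \notin \mathcal{F}$. In particular such a new edge contains $i$ but not $j$. First I would record the easy structural facts: every new edge contains $i$; if an edge of the $P_3$ in $\pi_{ij}(\mathcal{F})$ does not contain $i$, then it already lies in $\mathcal{F}$; and for any new edge $f\cup\{i\}$, its "shadow" $f\cup\{j\}$ lies in $\mathcal{F}$. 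Because $i$ lies in every new edge and the three edges of $P_3$ have pairwise intersections of size at most one, at most two of $f_1,f_2,f_3$ can contain $i$, and if two do, they must be the two that share the vertex $i$ (so they are consecutive, $f_1$ and $f_2$ say, or $f_2$ and $f_3$).

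The core of the argument is then a short case analysis on how many edges of the $P_3$ are new and on where $i$ sits among the three edges. In each case the strategy is the same: replace each new edge $f\cup\{i\}$ by its shadow $f\cup\{j\}$ and show that, after this substitution (possibly together with a relabeling of the role of $i$ versus $j$ in the path), one obtains a genuine copy of $P_3$ inside $\mathcal{F}$, contradicting that $\mathcal{F}$ is $P_3$-free. The point where one must be careful is that swapping $i$ for $j$ can destroy the disjointness $f_1\cap f_3=\emptyset$ (if $j$ happened to lie in the edge at the far end) or can merge two intersections into one vertex; this is exactly where I will use that $\mathcal{F}$ \emph{covers pairs}. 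Concretely, when the naive substitution fails, I will use a vertex $a$ in one edge and a vertex $b$ in another edge (chosen to avoid $i,j$ and the already-used intersection vertices, which is possible since $n\ge 6$ gives enough room) together with a covering edge $g\in\mathcal{F}$ through $a,b$ to re-route the path and still exhibit a $P_3\subseteq\mathcal{F}$. The combinatorial bookkeeping here — keeping track of which vertices are forced to be distinct — is the main obstacle, but it is finite and the covering-pairs hypothesis supplies exactly the missing edges.

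Finally, for the "furthermore" clause I would argue separately and more simply: if $\pi_{ij}(\mathcal{F})$ contained a $K^3_6$ on a vertex set $S$, then either $i\notin S$, in which case all ${6\choose 3}=20$ edges spanned by $S$ already lie in $\mathcal{F}$ (none of them is new, as new edges contain $i$), so $\mathcal{F}\supseteq K^3_6$, a contradiction; or $i\in S$, in which case I replace every new edge $f\cup\{i\}$ of this clique by its shadow $f\cup\{j\}$, which shows that $j\notin S$ (else the shadow would already be among the clique edges and the edge was not new) and that $\mathcal{F}$ restricted to $(S\setminus\{i\})\cup\{j\}$ is complete, again giving $K^3_6\subseteq\mathcal{F}$. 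Either way we contradict $K^3_6$-freeness of $\mathcal{F}$, completing the proof.
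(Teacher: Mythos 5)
Your overall plan for the main assertion (swap new edges $f\cup\{i\}$ for their shadows $f\cup\{j\}$, case-analyse by the degrees of $i$ and $j$ in the alleged copy of $P_3$, and invoke covering-pairs when the naive swap fails) is the same strategy the paper uses, and the easy cases you describe do go through exactly as in the paper. However, your sketch of the hard case understates the work required: when $i$ is the interior vertex of degree $2$ and $j$ lies outside the path, the problematic subcase is the one where exactly one of the two edges through $i$ is new, and there you do \emph{not} get to fix things with a single covering edge ``re-routing'' the path. The paper instead observes that $\mathcal{F}$ now contains a disjoint union of a single edge and a $P_2$ (an $F_2$-type configuration on $7$ vertices), and then applies the covering-pairs hypothesis \emph{repeatedly} (once for each of the six cross pairs) to force a whole family of additional edges of the form $a_1a_2b_q$ and $c_1c_2b_q$, from which a genuine $P_3$ is finally assembled. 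Your ``choose $a,b$ and a covering edge $g$ through them'' description does not capture this, so as written the argument is not complete for that subcase; you would need to flesh it out along the lines above.

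The ``furthermore'' clause has a genuine gap. Your case $i\in S$, $j\notin S$ claims that replacing each new edge $f\cup\{i\}$ of the alleged $K^3_6$ by its shadow $f\cup\{j\}$ yields a complete $3$-graph on $(S\setminus\{i\})\cup\{j\}$ inside $\mathcal{F}$. This is false in general: for a pair $\{a,b\}\subseteq S\setminus\{i\}$ whose edge $\{a,b,i\}$ is \emph{not} new (i.e.\ already in $\mathcal{F}$), you have no information about whether $\{a,b,j\}\in\mathcal{F}$, so the shadowed set need not span a clique. Neither $S$ nor $(S\setminus\{i\})\cup\{j\}$ has to support a $K^3_6$ in $\mathcal{F}$ when the edges through $i$ are a mix of new and non-new. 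The paper avoids this by \emph{not} trying to reproduce a $K^3_6$ in $\mathcal{F}$ at all: it uses covering-pairs to pick an edge $g\ni i,j$, picks four vertices of $V(K)$ disjoint from $g$, and then produces a $P_3$ in $\mathcal{F}$ of the form $\{123,\,34i,\,g\}$ or $\{123,\,34j,\,g\}$, contradicting $P_3$-freeness rather than $K^3_6$-freeness. Your proof of this clause would need to be replaced by something that invokes $P_3$-freeness and the covering edge through $\{i,j\}$ in this way.
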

\begin{proof}
Suppose for the contrary that there is a copy of $P_3$, denoted by $P$, such that $P \subseteq \pi_{ij}(\mathcal{F})$.
%Denote $V(P)=U$.
By the definition of $\pi_{ij}(\mathcal{F})$, for every edge $f\in \pi_{ij}(\mathcal{F})$ with $\{i,j\}\subseteq f$, $f \in \mathcal{F}$;
for every edge $f\in \pi_{ij}(\mathcal{F})$ with $j \in f$ and $i \notin f$, $f$ and $(f\setminus\{j\})\cup\{i\} \in \mathcal{F}$.
%Furthermore, if $\pi_{ij}(\mathcal{F})\neq \mathcal{F}$, then there exists some edge $e\in \pi_{ij}(\mathcal{F})$ with $i \in e$ and $j \notin e$ such that
%$(e\setminus\{i\})\cup\{j\} \in \mathcal{F}$.
Since $\mathcal{F}$ is $P_3$-free, there is $e \in P$ such that $e \notin \mathcal{F}$ and $(e\setminus\{i\})\cup\{j\} \in \mathcal{F}$. There are two cases according to the degree of $i$ in $P$.

{\em Case 1.} $d_P(i)=1$. There are two subcases according to the degree of $j$ in $P$.

Subcase 1.1. $d_P(j)=0$. Then $(P\setminus e)\cup \{(e\setminus \{i\})\cup\{j\}\}$ is a copy of $P_3$ in $\mathcal{F}$, a contradiction.

Subcase 1.2. $d_P(j)= 1$ or $2$. Then $\{ f\cup \{i\}: f\in L_{P}(j \setminus i)\}\cup \{f\cup \{j\}: f\in L_{P}(i \setminus j)\}\cup \{e\in P: \text{ both} \: i,j \notin e\}$
(i.e., exchange $i$ and $j$ in $P$) is a copy of $P_3$ in $\mathcal{F}$, a contradiction.

{\em Case 2.} $d_P(i)=2$. There are three subcases according to the degree of $j$ in $P$.

Subcase 2.1. $d_P(j)=2$. We may assume $P=\{abi,icj,jdf\}$. Then $abj \in \mathcal{F}$ and $idf \in \mathcal{F}$. So $\{abj,jci,idf\}$ forms a copy of $P_3$ in $\mathcal{F}$, a contradiction.

Subcase 2.2. $d_P(j)=1$. If $ij$ is contained in some edge $f$ of $P$, then
 $\{(e\setminus \{i\})\cup\{j\},f,g\}$ forms a copy of $P$ in $\mathcal{F}$, where $g\in P\setminus\{e,f\}$, a contradiction.
Now assume that $P=\{abi,icd,dfj\}$. If $abj \in \mathcal{F}$, then we get a contradiction that $\{abj,dfj,icd\}$ forms a copy of $P_3$ in $\mathcal{F}$. Otherwise $jcd \in \mathcal{F}$. Note that $dfi \in \mathcal{F}$. Then we get a contradiction that $\{abi,ifd,dcj\}$ forms a copy of $P_3$ in $\mathcal{F}$.

Subcase 2.3. $d_P(j)=0$. We can assume that $P=\{abi,icd,dgh\}$. If both $abj,jcd$ are in $\mathcal{F}$, then $\{abj,jcd,dgh\}$ forms a copy of $P_3$ in $\mathcal{F}$, a contradiction.
Otherwise we get a copy of $\{a_1a_2b_0,b_0c_2c_1,b_1b_2b_3\}$ in $\mathcal{F}$. Relabel the vertices of $\mathcal{F}$ such that $\{a_1a_2b_0,b_0c_2c_1,b_1b_2b_3\} \subseteq \mathcal{F}$.
Since $\mathcal{F}$ covers pairs, then for every $p\in[2]$ and $q\in [3]$, there exists $x_{pq} \in V(\mathcal{F})$  such that $a_pb_qx_{pq} \in \mathcal{F}$.
Then $x_{pq}=a_k$ for $k\in \{1,2\}\setminus \{p\}$, i.e., $a_1a_2b_1,a_1a_2b_2,a_1a_2b_3 \in \mathcal{F}$, since otherwise there exists $p\in[2]$ and $q\in [3]$ such that
 $\{b_0c_2c_1,a_pb_qx_{pq},b_1b_2b_3\}$ ($x_{pq}\in \{c_1,c_2\}$) or $\{a_1a_2b_0,b_0c_2c_1,a_pb_qx_{pq}\}$ ($x_{pq}\notin \{c_1,c_2\}$)
forms a copy of $P_3$ in $\mathcal{F}$.
In a similar way, $c_1c_2b_1,c_1c_2b_2,c_1c_2b_3 \in \mathcal{F}$.
Then $\{a_1a_2b_1,b_1b_2b_3,b_3c_1c_2\}$ forms a copy of $P_3$ in $\mathcal{F}$, a contradiction.

Next, suppose that $\mathcal{F}$ is  $K^3_{6}$-free. Since $\mathcal{F}$ covers pairs, $\{i,j\}$ is contained in some edge $g$ of $\mathcal{F}$.
Suppose for contradiction that $\pi_{ij}(\mathcal{F})$ contains a copy $K$ of $K^3_{6}$. Clearly $V(K)$ must contain $i$. If $V(K)$ also contains $j$ then it is easy to see that $K$ also exists in $\mathcal{F}$, contradicting $\mathcal{F}$ being $K^3_{6}$-free.
All the edges in $K$ not containing $i$ also exist in $\mathcal{F}$.
Without loss of generality assume that $[4]\subseteq V(K)\setminus g$.
Thus $\{123,34i,g\}$ or $\{123,34j,g\}$ forms a copy of $P_3$ in $\mathcal{F}$, a contradiction.
\end{proof}

\subsection{Left-compressing  $P_4$-free $3$-graphs covering pairs}

Denote $F_1$ as the $3$-graph with the union of two disjoint $P_2$'s, $F_2$ as the $3$-graph with the union of disjoint $P_1$ and $P_3$,
and $F_3$ as the $3$-graph with vertex set $\{a_1,a_2,a_3,b_1,b_2,c_1,c_2,d_1,d_2\}$ and edge set $\{a_1a_2a_3,a_1b_1b_2,a_2c_1c_2$, $a_3d_1d_2\}$.
The following lemma is vital for the proof of the Lagrangian density of $P_4$ and its proof is postponed to Subsection 3.4.
\vspace{1em}
\begin{figure}[H]
\centering
\begin{minipage}{5cm}
\includegraphics[width=1\textwidth, height=0.35\textwidth]{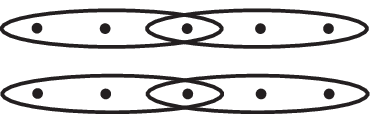}
\label{fig F1}
\caption{$F_1$}
\end{minipage}
\begin{minipage}{5cm}
\includegraphics[width=1\textwidth, height=0.35\textwidth]{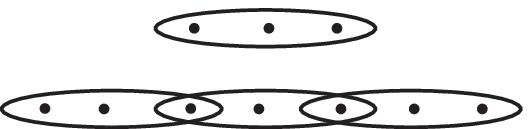}
\label{fig F2}
\caption{$F_2$}
\end{minipage}
\begin{minipage}{5cm}
\includegraphics[width=0.9\textwidth, height=0.7\textwidth]{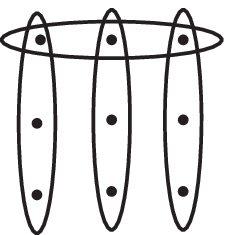}
\label{fig F3}
\caption{$F_3$}
\end{minipage}
\end{figure}

\begin{lemma} \label{lemmaF_i}
If $\mathcal{F}$ be a $P_4$-free $3$-graph with $ n\ge 9$ vertices that covers pairs, then $\mathcal{F}$ is $F_1$-free and $F_2$-free. Furthermore, if $\lambda(\mathcal{F}) \ge \lambda(K_{8}^3)-0.005$, then $\mathcal{F}$ is also $F_3$-free.
\end{lemma}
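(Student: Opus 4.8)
The plan is to argue by contradiction in each of the three parts: assume a copy of $F_i$ sits inside $\mathcal{F}$, use the hypothesis that $\mathcal{F}$ covers pairs to produce an edge joining the two ``far apart'' pieces of that copy, and then re-chain three edges of the copy together with this new edge into a copy of $P_4$, contradicting $P_4$-freeness. Since $|V(F_1)|=|V(F_2)|=10$, the assertions about $F_1$ and $F_2$ are vacuous when $n=9$, so for those two parts I may assume $n\ge 10$.

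For $F_1$, write the copy as $\{a_1a_2a_3,\ a_3a_4a_5\}\cup\{b_1b_2b_3,\ b_3b_4b_5\}$, with $a_3,b_3$ the two central vertices. Given outer vertices $a_i$ ($i\ne 3$) and $b_j$ ($j\ne 3$), let $\{a_i,b_j,z\}$ be an edge of $\mathcal{F}$ covering the pair $\{a_i,b_j\}$. A short inspection shows that $\{a_i,b_j,z\}$, together with the two edges of the first $P_2$ and one suitably chosen edge of the second $P_2$ (discarding the other one and its two private vertices), forms a copy of $P_4$ unless $z$ lies in a small ``forbidden'' set — essentially the partner of $a_i$ inside its $P_2$-edge together with the partner of $b_j$ inside its $P_2$-edge. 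Ranging over all choices of $a_i,b_j$, the forbidden cases force several specific triples (e.g.\ two $3$-element subsets of $\{a_1,a_2,b_1,b_2\}$) to be edges of $\mathcal{F}$; a further re-chaining of those forced edges — or repeating the argument with a different outer pair, or with a vertex outside $V(F_1)$ — again produces a copy of $P_4$. The $F_2$-part is entirely analogous: write the copy as $\{c_1c_2c_3\}\cup\{d_1d_2d_3,\ d_3d_4d_5,\ d_5d_6d_7\}$, take an edge through a pair $\{d_i,c_k\}$ with $d_i$ an end vertex of the $d$-path, observe that it completes the three edges of the $d$-path to a copy of $P_4$ unless its third vertex lies in a small forbidden set, and close off the residual cases the same way.

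For $F_3$ I would additionally use $\lambda(\mathcal{F})\ge\lambda(K_8^3)-0.005$. First note $\mathcal{F}$ is $K_8^3$-free: if $K_8^3\subseteq\mathcal{F}$ on a vertex set $S$, pick $v\notin S$ (possible as $n\ge 9$), let $\{v,u,w\}$ cover the pair $\{v,u\}$ for some $u\in S$, and build a linear $3$-path $f_2,f_3,f_4$ inside $K_8^3[S]$ starting at $u$ and avoiding $w$ (there are enough vertices of $S$ for this); then $\{v,u,w\},f_2,f_3,f_4$ is a copy of $P_4$. Now assume $F_3=\{a_1a_2a_3,\ a_1b_1b_2,\ a_2c_1c_2,\ a_3d_1d_2\}\subseteq\mathcal{F}$. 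For each of the twelve ``cross-leg'' pairs (one vertex from each of two distinct legs, e.g.\ $\{b_i,d_j\}$), a covering edge, together with the central edge and two of the legs, forms a copy of $P_4$ unless its third vertex lies in $\{a_1,a_2,a_3\}$ or in the third leg; so all twelve covering edges, and likewise the edges covering the pairs between the centre and the non-incident legs, are forced into restricted families. Combined with $P_4$-freeness and the $F_1$- and $F_2$-freeness already proved, these restrictions confine the edge set of $\mathcal{F}$ (or of a dense subgraph $\mathcal{F}^*$ with $\lambda(\mathcal{F}^*)=\lambda(\mathcal{F})$, which still covers pairs and is $P_4$-free) severely enough that it contains no $K_8^3$ and no $K_7^3$ surrounded by too many extra edges; one then estimates, via Facts \ref{K_4^3-}--\ref{K_8^3-} and Theorem \ref{theoPZ}, that $\lambda(\mathcal{F})$ is strictly below $\lambda(K_8^3)-0.005$ — the relevant target being $\lambda(K_7^3)$, and indeed $\lambda(K_8^3)-\lambda(K_7^3)=\frac{7}{64}-\frac{35}{343}>0.007$ — which contradicts the hypothesis.

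The routine parts are the finitely many checks that the ``generic'' location of the third vertex of a covering edge always yields a copy of $P_4$. I expect the main obstacle to be the bookkeeping that disposes of the finitely many ``forbidden'' sub-cases — verifying that the edges those sub-cases force into $\mathcal{F}$ cannot coexist with $\mathcal{F}$ being $P_4$-free — and, for $F_3$, extracting from the forced structure a clique-avoidance and edge-count statement sharp enough that the resulting Lagrangian bound actually beats the constant $\lambda(K_8^3)-0.005$.
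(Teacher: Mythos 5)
Your approach to $F_1$- and $F_2$-freeness is essentially the one in the paper: use the cover-pairs hypothesis to force the third vertex of a covering edge between the two ``far-apart'' pieces into a small set, then re-chain to get a copy of $P_4$. The residual case analysis you gesture at is precisely Claim~1 and its aftermath in the paper's proof of part~(i), and Cases~1--2 in its proof of part~(ii); these take some care (they involve a further round of covering-pair arguments among the forced edges), but your plan is correct in spirit.

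The $F_3$ part, however, has a genuine gap. The paper's argument splits into two quite different pieces. First, using \emph{only} cover-pairs and $P_4$-freeness (and $n\ge 9$), it shows that any such $\mathcal{F}$ containing $F_3$ must have $V(\mathcal{F})=V(F_3)$, i.e.\ $n=9$ exactly. This is the hard part: one shows that an extra vertex $x$ forces so many covering edges through $x$ and $V(F_3)$ that eventually some pair (such as $\{b_2,c_2\}$) cannot be covered at all. Only once $n=9$ does the $\lambda$-hypothesis enter: $P_4$-freeness then rules out a short explicit list $E'$ of triples, so $\mathcal{F}\subseteq {[9]\choose 3}\setminus E' =: \mathcal{G}$, and a direct Lagrange-multiplier computation gives $\lambda(\mathcal{G})\le 0.1035<\lambda(K_8^3)-0.005$. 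Your plan instead tries to get a $\lambda$-bound without first pinning down $V(\mathcal{F})$, via $K_8^3$-freeness plus constraints around the $F_3$ copy, but this does not close. $K_8^3$-freeness alone gives only $\lambda\le\lambda(K_8^{3-})<0.1077$ by Fact~\ref{K_8^3-}, and $0.1077>\lambda(K_8^3)-0.005\approx 0.1044$, so you need more. The constraints you derive from the $F_3$ copy only restrict edges touching $V(F_3)$; if $n$ is large, the rest of $\mathcal{F}$ is untouched and can carry most of the Lagrangian. Passing to a dense subgraph $\mathcal{F}^*$, as you suggest, is also not a fix: $\mathcal{F}^*$ is still $P_4$-free, dense, and covers pairs, but it need not contain the copy of $F_3$, so the restrictions you derived no longer apply to it. The missing idea is that one must first prove $n=9$ (which does not use the $\lambda$-hypothesis at all) before any Lagrangian computation is possible; the $\lambda$-hypothesis is only needed to rule out a single nine-vertex configuration.
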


\begin{lemma} \label{lemmaP_4}
Let $\mathcal{F}$ be a $P_4$-free $3$-graph with $ n\ge 9$ vertices that covers pairs.
If $\lambda(\mathcal{F}) \ge \lambda(K_{8}^3)-0.005$, then $\pi_{ij}(\mathcal{F})$ is also $P_4$-free.
Furthermore, if $\mathcal{F}$ is also $K^3_{8}$-free, then $\pi_{ij}(\mathcal{F})$ is $K^3_{8}$-free.
\end{lemma}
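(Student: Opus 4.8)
The plan is to mimic the structure of the proof of Lemma \ref{lemmaP_3}, but to lean heavily on Lemma \ref{lemmaF_i} to rule out the hard configurations that a naive compression argument cannot handle. Suppose for contradiction that $\pi_{ij}(\mathcal{F})$ contains a copy $P$ of $P_4$. As in Lemma \ref{lemmaP_3}, since $\mathcal{F}$ is $P_4$-free, there must be an edge $e\in P$ with $e\notin\mathcal{F}$ but $e'=(e\setminus\{i\})\cup\{j\}\in\mathcal{F}$; in particular $i\in e$, $j\notin e$, and every edge of $P$ not containing $i$ already lies in $\mathcal{F}$. Also, since $\mathcal{F}$ covers pairs, fix an edge $g\in\mathcal{F}$ with $\{i,j\}\subseteq g$; note $g$ has a third vertex, call it $z$.

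First I would dispose of the easy cases exactly as before. If $d_P(j)=0$, then swapping: if $d_P(i)=1$, replacing $e$ by $e'$ gives a $P_4$ in $\mathcal{F}$ unless $e'$ shares a vertex with the rest of $P$ (namely $j$ would be a new intersection vertex), but $j\notin V(P)$ makes $e'$ edge-disjoint-compatible, a contradiction; the subtlety is only when $j$ coincides with a vertex of $P$ already, i.e. $d_P(j)\ge 1$. More generally, if $L_P(j\setminus i)=\emptyset$ — equivalently $j$ is not "in the way" — the symmetric exchange of $i$ and $j$ in $P$ produces a copy of $P_4$ in $\mathcal{F}$, a contradiction. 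So I may assume $d_P(j)\ge 1$ and that $\{i,j\}$ is positioned badly inside $P$. Then I split on $d_P(i)\in\{1,2\}$ and $d_P(j)\in\{1,2\}$ and on the relative positions of the edges through $i$ and through $j$ along the path, exactly as in Subcases 2.1–2.3 of Lemma \ref{lemmaP_3}. In each "local" case (where the $i$-edges and $j$-edges of $P$ are within bounded distance along the path), performing the $i\leftrightarrow j$ swap, or using that $g$ supplies a connector edge through $\{i,j\}$, yields a $P_4$ in $\mathcal{F}$. The genuinely new feature compared to $P_3$ is that $P_4$ is longer, so there are more ways for the $i$-edge and the $j$-edge to sit far apart along the path; but when they are far apart the swap is harmless and succeeds immediately.

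The hard cases — and this is where Lemma \ref{lemmaF_i} does the real work — are those in which the attempted repair in $\mathcal{F}$ produces not a $P_4$ but one of $F_1$ (two disjoint copies of $P_2$), $F_2$ (a disjoint $P_1$ and $P_3$), or $F_3$. Concretely: after replacing $e$ by $e'$ and, where needed, inserting the connector edge $g$ through $\{i,j\}$, the resulting four edges of $\mathcal{F}$ form a linear $3$-graph on at most $9$ vertices which is either a $P_4$ (contradiction, done) or one of $F_1, F_2, F_3$ (also a contradiction, by Lemma \ref{lemmaF_i}, using the hypothesis $\lambda(\mathcal{F})\ge\lambda(K_8^3)-0.005$ to invoke the $F_3$-freeness clause). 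So the strategy in each remaining subcase is: write down the four edges of $\mathcal{F}$ I can certify (three coming from $P$'s edges not through $i$, together with $e'$, or with $g$ substituted for an $i$-edge), check that two edges meeting in exactly one vertex plus a matched pair already form $F_1$ or $F_2$, or that the "spider" pattern $\{a_1a_2a_3, a_1b_1b_2, a_2c_1c_2, a_3d_1d_2\}$ appears, hence $F_3$. The bookkeeping obstacle is ensuring that the vertices of $e'$ (and of $g$) are genuinely disjoint from the parts of $P$ they need to be disjoint from; here I use $n\ge 9$, the fact that $j\notin V(P)$, and that $z\in g$ can be taken outside $V(P)$ (if $z\in V(P)$ one reduces to an already-handled local case).

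Finally, for the "furthermore" part about $K^3_8$-freeness, I would copy the last paragraph of the proof of Lemma \ref{lemmaP_3} essentially verbatim, replacing $6$ by $8$: if $\pi_{ij}(\mathcal{F})$ contained a $K^3_8$, its vertex set must contain $i$; if it also contained $j$ the clique already lies in $\mathcal{F}$; otherwise pick four vertices of the clique disjoint from the connector edge $g$ through $\{i,j\}$ — possible since $|V(K^3_8)\setminus(g\cup\{j\})|\ge 8-2-1=5\ge 4$ — and then $\{123, 34i, g\}$ or $\{123, 34j, g\}$ is a $P_3$, hence trivially extends (or: directly exhibits a $P_4$ by appending a fourth clique-edge) to contradict $P_4$-freeness of $\mathcal{F}$. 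I expect the main obstacle to be the case analysis in the middle paragraph: organizing the positions of $i$ and $j$ along the longer path $P_4$ so that every configuration either admits a clean swap or visibly contains $F_1$, $F_2$, or $F_3$ in $\mathcal{F}$ — the $F_3$ case in particular is the one that forces the Lagrangian hypothesis and is the reason Lemma \ref{lemmaF_i} is stated the way it is.
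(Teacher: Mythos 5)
Your high-level strategy matches the paper's: assume $\pi_{ij}(\mathcal{F})$ contains a $P_4$, produce an edge $e$ through $i$ whose $j$-image lies in $\mathcal{F}$, split on $d_P(i)$ and $d_P(j)$, swap where possible, and fall back on Lemma~\ref{lemmaF_i} ($F_1$-, $F_2$-, $F_3$-freeness) when the swap fails. You also correctly identify that the $F_3$ subcase is the one that forces the Lagrangian hypothesis. But this lemma is a case-analysis lemma and you have not carried out the case analysis, which is the entire content of the proof: you say ``in each local case the swap or the connector edge yields a $P_4$, or else $F_1/F_2/F_3$ appears,'' without verifying a single configuration. The claim that ``when $i$- and $j$-edges are far apart the swap is harmless'' is not quite right --- for instance $P'=\{abi,icd,dfj,fgh\}$ has them adjacent and is exactly the $F_3$ case, while $P'=\{abi,icd,dfg,jgh\}$ has them separated yet requires a careful two-edge exchange, not a blind swap. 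Your proposal also brings in an external covering edge $g$ through $\{i,j\}$ for the $P_4$-freeness argument; the paper never uses any edge outside $P'$ and its compression images there (the edge through $\{i,j\}$ is only used for the $K^3_8$ part), and it is not clear your $g$-based repairs stay vertex-disjoint from $P'$ without extra argument.

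On the ``furthermore'' clause you also have a small but real slip: you pick only four clique vertices outside $g$ and get a $P_3$, then hope to ``trivially extend.'' The extension is not automatic --- appending a fourth edge requires it to meet the end of the $P_3$ in exactly one vertex and be otherwise disjoint, which needs two more clique vertices disjoint from $g$. Since $j\notin V(K)$ and $|V(K)\cap g|\le 2$, you actually have at least six clique vertices outside $g$, so you should take $[6]\subseteq V(K)\setminus g$ and write down $\{123,345,56i,g\}$ or $\{123,345,56j,g\}$ directly as the paper does; as stated your extension step is unjustified.
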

\begin{proof}
The proof is similar to Lemma \ref{lemmaP_3}.
Suppose for the contrary that there is a copy of $P_4$, denoted by $P'$, such that $P' \subseteq \pi_{ij}(\mathcal{F})$.
%Denote $V(P')=U$.
Since $\mathcal{F}$ is $P_4$-free then there is $e^* \in P'$ such that $i \in e^*$, $e^* \notin \mathcal{F}$ and $(e^*\setminus\{i\})\cup\{j\} \in \mathcal{F}$.

{\em Case 1.} $d_{P'}(i)=1$. If $d_{P'}(j)=0$. Then $(P'\setminus e^*)\cup \{(e^*\setminus \{i\})\cup\{j\}\}$ forms a copy of $P_4$ in $\mathcal{F}$, a contradiction.
Otherwise $d_{P'}(j)= 1$ or $2$. Then $\{ f^*\cup \{i\}: f^*\in L_{P'}(j \setminus i)\})\cup \{f^*\cup \{j\}: f^*\in L_{P'}(i \setminus j)\})\cup \{f^*\in P':\{i,j\}\subseteq f^* \: {\text or \: both} \: i,j \notin f^*\}$ contains a copy of $P_4$ in $\mathcal{F}$, a contradiction.

{\em Case 2.} $d_{P'}(i)=2$.

Subcase 1. $d_{P'}(j)=0$.
There are two subcases: $P'=\{abi,icd,def,fgh\}$ or $P'=\{abc,cdi,ief,fgh\}$.
First we consider $P'=\{abi,icd,def,fgh\}$. If both $abj,jcd \in \mathcal{F}$, then $\{abj,jcd,def,fgh\}$ forms a copy of $P_4$ in $\mathcal{F}$, a contradiction.
Otherwise we get $\{abi,jcd,def,fgh\}$ or $\{abj,icd,def,fgh\}$ in $\mathcal{F}$, which is isomorphic to $F_2$. By Lemma \ref{lemmaF_i}, this is a contradiction.
Suppose $P'=\{abc,cdi,ief,fgh\}$.
If both $cdj, jef \in \mathcal{F}$, then $\{abc,cdj,jef,fgh\}$ forms a copy of $P_4$ in $\mathcal{F}$, a contradiction. Otherwise we get $\{abc,cdi,jef,fgh\}$ or $\{abc,cdj,ief,fgh\}$ in $\mathcal{F}$, which is isomorphic to $F_1$. By Lemma \ref{lemmaF_i}, this is a contradiction.

Subcase 2. $d_{P'}(j)=1$. If $ij$ is contained in some edge $g^*$ of $P'$, then $(h^*\setminus \{i\})\cup\{j\} \in \mathcal{F}$ for some $h^* \in P'$ with $i\in h^*$ and $j \notin h^*$. Hence $\{(h^*\setminus \{i\})\cup\{j\},g^*\})\cup (\{P\setminus\{g^*,h^*\})$ forms a copy of $P_4$ in $\mathcal{F}$, a contradiction.
Otherwise then $P'=\{abi,icd,djf,fgh\}$ or $\{abi,icd,dfg,gjh\}$ or $\{jab,bci,idf,fgh\}$.
Denote the edges $e_1,e_2,e_3 \in P'$ such that $i\in e_1,e_2$ and $j\in e_3$.
If both $(e_1\setminus \{i\})\cup\{j\}, (e_2\setminus \{i\})\cup\{j\} \in \mathcal{F}$, then $\{ (e_1\setminus \{i\})\cup\{j\}, (e_2\setminus \{i\})\cup\{j\}, (e_3\setminus \{j\})\cup\{i\} \} \cup (P\setminus\{e_1,e_2,e_3\})$ forms a copy of $P_4$ in $\mathcal{F}$, a contradiction.
So assume exactly one of $(e_1\setminus \{i\})\cup\{j\}, (e_2\setminus \{i\})\cup\{j\}$ is not in $\mathcal{F}$.

For $P'=\{abi,icd,dfj,fgh\}$, if $abj \in \mathcal{F}$ then $\{abj,icd,dfj,fgh\}$ forms a copy of $F_3$ in $\mathcal{F}$, by Lemma \ref{lemmaF_i}, this is a contradiction.
If $jcd \in \mathcal{F}$ then $\{dfi,jcd,fgh,abi\}$ forms a copy of $F_3$ in $\mathcal{F}$, a contradiction.

For $P'=\{abi,icd,dfg,jgh\}$, if $abj \in \mathcal{F}$ then $\{icd,dfg,jgh,abj\}$ forms a copy of $P_4$ in $\mathcal{F}$, a contradiction.
If $jcd \in \mathcal{F}$ then $\{jcd,dfg,ghi,iab\}$ forms a copy of $P_4$ in $\mathcal{F}$, a contradiction.

For $P'=\{abj,ibc,idf,fgh\}$, if $jbc \in \mathcal{F}$ then $\{jbc,abi,idf,fgh\}$ forms a copy of $P_4$ in $\mathcal{F}$, a contradiction.
If $jdf \in \mathcal{F}$ then $\{abj,bci,jdf,fgh\}$ forms a copy of $P_4$ in $\mathcal{F}$, a contradiction.

Subcase 3. $d_{P'}(j)=2$.
If $\{i,j\}$ is contained in some edge $e_4$ of $P'$, denote the edge of $P'$ containing $i$ but not $j$ as $e_5$,
the edge of $P'$ containing $j$ but not $i$ as $e_6$, and the edge of $P'$ containing neither $i$ nor $j$ as $e_7$,
then $\{(e_6\setminus \{j\})\cup \{i\}, e_7,e_4, (e_5\setminus \{i\})\cup \{j\}\}$ forms a copy of $P_4$ in $\mathcal{F}$, a contradiction.
Otherwise we can assume that $P'=\{abi,icd,dej,jfg\}$.
If both $abj,jcd$ in $\mathcal{F}$, then $\{abj,jcd,dei,ifg\}$ forms a copy of $P_4$ in $\mathcal{F}$, a contradiction.
If $abj \notin \mathcal{F}$ and $jcd \in \mathcal{F}$, then $\{abi,dei,jcd,jgh\}$ forms a copy of $P_4$ in $\mathcal{F}$, a contradiction.
Otherwise $abj \in \mathcal{F}$ and $jcd \notin \mathcal{F}$, then $\{abj,dej,icd,ifg\}$ forms a copy of $P_4$ in $\mathcal{F}$, a contradiction.

Next, suppose that $\mathcal{F}$ is  $K^3_{8}$-free. Since $\mathcal{F}$ covers pairs, $\{i,j\}$ is contained in some edge $g^*$ of $\mathcal{F}$.
Suppose for contradiction that $\pi_{ij}(\mathcal{F})$ contains a copy $K$ of $K^3_{8}$. Clearly, $V(K)$ must contain $i$. If $V(K)$ also contains $j$ then it is easy to see that $K$ also exists in $\mathcal{F}$, contradicting $\mathcal{F}$ being $K^3_{8}$-free.
By our assumption, $V(K)$ contains at least $6$ vertices outside $g^*$. Without loss of generality assume that $[6]\subseteq V(K)\setminus g^*$.
Thus $\{123,345,56i,g^*\}$ or $\{123,345,56j,g^*\}$ forms a copy of $P_4$ in $\mathcal{F}$, a contradiction.
\end{proof}

\subsection{Lagrangian densities of $P_3$ and $P_4$}
We will perform the following  algorithm in the proof of Theorem \ref{theoremP_3P_4}.
In the  algorithm, $t=3$ or $4$.
\begin{algorithm}\label{left-compression1}{\rm (Dense and left-compressed $3$-graph)}

\noindent{\bf Input:}   A $P_t$-free $3$-graph $G$ on vertex set $[n]$ ( with $\lambda(G)\ge \lambda(K_{8}^3)-0.005$  for $t=4$).

\noindent{\bf Output:}  A $P_t$-free $3$-graph $G'$ that is dense and left-compressed, and satisfies that $\lambda(G')\ge\lambda(G)$.

\noindent{\bf Step 1.} If $G$ is dense then let $G'=G$ and go to Step 2.
 Otherwise replace $G$ by a dense subgraph $G'$ with the same Lagrangian and go to Step 2.

\noindent{\bf Step 2.} Let $\vec{x}=(x_1,\dots,x_n)$ be an optimum weighting of $G'$.
Assume that $x_1\ge x_2\ge ... \ge x_{|V(G')|}$ since otherwise we can relabel the vertices.
If $G'$ is left-compressed then terminate. Otherwise there exist vertices  $i,j$, where $i<j$, such that $L_{G'}(j \setminus i)\neq \emptyset $, then replace $G'$ by $\pi_{ij}(G')$ and go to step 1.
\end{algorithm}

Note that the algorithm terminates after finite many steps. By Lemma \ref{lemmaP_3} or \ref{lemmaP_4} and Fact \ref{compression-preserve}. The output of the algorithm  $G'$ is a dense and left-compressed $P_t$-free $3$-graph with Lagrangian at lest  $\lambda(G)$.
%the left-compressing operation does not change the property of $P_t$-free by Lemma \ref{lemmaP_3} and \ref{lemmaP_4}. The algorithm terminates after finite many steps and the Lagrangian will not be decrease by Fact \ref{compression-preserve}.

\begin{theo}\label{theoremP_3P_4}
Let $t=3$ or $4$.
Then
$$\pi_{\lambda}(P_t)=3!\lambda(K_{2t}^3).$$
Furthermore, for any $P_t$-free and $ K_{2t}^3$-free $3$-graph $G$, there exists $\epsilon=\epsilon(t)>0$ such that $\lambda(G)\le \lambda(K_{2t}^3)-\epsilon$.
\end{theo}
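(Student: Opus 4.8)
The plan is to follow the template already set by Lemma \ref{p_2}: reduce to a dense, left-compressed extremal configuration and then read off its structure. Since $K_{2t}^3$ is $P_t$-free (a clique on $2t$ vertices contains no linear path of length $t$, which needs $2t+1$ vertices), the lower bound $\pi_{\lambda}(P_t)\ge 3!\lambda(K_{2t}^3)$ is immediate, and by Fact \ref{K_t^r} and Theorem \ref{theoPZ} it remains to prove the matching upper bound together with the "furthermore" gap statement. So let $G$ be a $P_t$-free $3$-graph; we want $\lambda(G)\le \lambda(K_{2t}^3)$, with strict improvement by a fixed $\epsilon$ when $G$ is also $K_{2t}^3$-free.

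First I would pass to a dense subgraph $G^*$ with $\lambda(G^*)=\lambda(G)$; by Fact \ref{dense} it covers pairs, and being a subgraph of $G$ it is still $P_t$-free. For $t=4$ we may assume $\lambda(G^*)\ge \lambda(K_8^3)-0.005$, since otherwise we are already done (as $\lambda(K_8^3)=\binom 83 8^{-3} > 0.005 + $ anything we need... more precisely if $\lambda(G^*)<\lambda(K_8^3)-0.005$ the bound holds trivially with room to spare). Now run Algorithm \ref{left-compression1}: using Lemma \ref{lemmaP_3} (for $t=3$) or Lemma \ref{lemmaP_4} (for $t=4$) together with Fact \ref{compression-preserve}, each compression step preserves $P_t$-freeness and does not decrease the Lagrangian, and re-densifying keeps the pair-covering and $P_t$-free properties; the process terminates in finitely many steps. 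The output is a dense, left-compressed, $P_t$-free $3$-graph $G'$ with $\lambda(G')\ge \lambda(G)$, on $n' := |V(G')|$ vertices.

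Next I would argue that $n'\le 2t$. Suppose $n'\ge 2t+1$. Because $G'$ is left-compressed, the vertices can be ordered so that whenever $e\in G'$ and $f$ is obtained from $e$ by lowering one coordinate, $f\in G'$; in particular the "initial segment" structure forces, in a $P_t$-free left-compressed dense graph, that the high-weight vertices span a clique and every edge lives close to it. The cleanest route is: since $G'$ is dense it covers pairs (Fact \ref{dense}), and a left-compressed pair-covering $3$-graph on $\ge 2t+1$ vertices must contain a linear path of length $t$ — one builds the path greedily, at each step using pair-coverage to attach a new edge through a fresh vertex, the left-compressed condition guaranteeing the needed edges are present once some edge on the relevant vertices exists. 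This yields $P_t\subseteq G'$, a contradiction. Hence $n'\le 2t$, so $G'\subseteq K_{2t}^3$ and $\lambda(G)=\lambda(G')\le\lambda(K_{2t}^3)$ by Fact \ref{mono}, proving $\pi_{\lambda}(P_t)=3!\lambda(K_{2t}^3)$.

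For the "furthermore" part, assume in addition that $G$ (hence $G'$) is $K_{2t}^3$-free. Then $G'$ is a subgraph of $K_{2t}^3$ on at most $2t$ vertices missing at least one edge, i.e. $G'\subseteq K_{2t}^{3-}$. For $t=3$ this gives $\lambda(G)\le \lambda(K_6^{3-})<0.0887<\lambda(K_6^3)$ by Fact \ref{K_6^3-} and Fact \ref{K_t^r} (indeed $\lambda(K_6^3)=\binom 63 6^{-3}=20/216=5/54>0.0925$), so we may take $\epsilon(3)=\lambda(K_6^3)-\lambda(K_6^{3-})>0$. For $t=4$, $\lambda(G)\le\lambda(K_8^{3-})<0.1077$ by Fact \ref{K_8^3-}, while $\lambda(K_8^3)=\binom 83 8^{-3}=56/512=7/64>0.109$, so $\epsilon(4)=\lambda(K_8^3)-\lambda(K_8^{3-})>0$ works. (One must double-check the density reduction did not lose the $\lambda(K_8^3)-0.005$ lower bound needed to invoke Lemma \ref{lemmaP_4}; since a $K_8^3$-free $G$ with $\lambda(G)<\lambda(K_8^3)-0.005$ already satisfies the desired strict bound, this case is handled separately without the lemma.)

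The main obstacle is the key structural claim that a dense (equivalently, left-compressed pair-covering) $P_t$-free $3$-graph has at most $2t$ vertices — in Lemma \ref{p_2} the analogous fact ($\le t-1$ vertices) came directly from Lemma \ref{p_2'}, but for $P_3,P_4$ one needs the full strength of left-compression, supplied by Lemmas \ref{lemmaP_3}--\ref{lemmaF_i}, and a careful greedy embedding of the path that exploits both pair-coverage and the downward-closed edge structure; getting the embedding to work without accidentally reusing a vertex is where the real case analysis lies.
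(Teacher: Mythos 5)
Your reduction (pass to a dense subgraph, left-compress via Algorithm~\ref{left-compression1} with Lemmas~\ref{lemmaP_3}/\ref{lemmaP_4} and Fact~\ref{compression-preserve}) matches the paper, and the case $n'\le 2t$ is handled correctly. But the pivotal structural claim --- that a dense, left-compressed, pair-covering, $P_t$-free $3$-graph has at most $2t$ vertices --- is false, and that is where the proposal breaks. Take the star $S_n=\{1ij : 2\le i<j\le n\}$: it is left-compressed, covers pairs, is dense (its Lagrangian strictly increases with $n$, tending to $\tfrac{2}{27}$), and is $P_3$-free, since every edge contains vertex $1$ and hence no two edges of a would-be $P_3$ can be disjoint. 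So there is no hope of a greedy path embedding forcing $n'\le 6$ (or $n'\le 8$). The ``$|V(G')|\le t-1$'' conclusion in Lemma~\ref{p_2} relied on $P_2$/$T_2$ being so small that Lemma~\ref{p_2'} directly forbids an edge pair; nothing analogous is available here.

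What the paper actually does when $n\ge 2t+1$ is quantitative, not structural in your sense. It decomposes $\lambda(G)=\lambda(L(1),\vec{x})+\lambda(G[[2,n]],\vec{x}')$ and bounds $\lambda(L(1),\vec{x})\le \tfrac12 x_1(1-x_1)^2$. For $t=3$ it shows $G[[2,n]]=\emptyset$ (i.e.\ the dense left-compressed $P_3$-free graph \emph{is} a star on $\ge 7$ vertices), giving $\lambda(G)\le 2/27<5/54=\lambda(K_6^3)$. For $t=4$ it shows $G[[2,n]]$ is $P_3$-free, then bounds $\lambda(G[[2,n]])\le 2/25$ by a further case analysis (using Theorem~\ref{theoPZ} when the dense kernel has $7$ vertices), yielding $\lambda(G)\le 1250/11907<7/64=\lambda(K_8^3)$. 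The explicit $\epsilon(t)$ then comes from taking the minimum of the two gaps: the clique-minus-an-edge gap (your Facts~\ref{K_6^3-},~\ref{K_8^3-}) in the small-$n'$ case, and the star-type gap ($5/54-2/27$, resp.\ $7/64-1250/11907$) in the large-$n'$ case. You need both; your proposal only produces the first.
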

\begin{proof}
Let $t=3$ or $4$.
Let $F$ be a  $P_t$-free $3$-graph with with $\lambda(F)\ge \lambda(K_{8}^3)-0.005$.
Applying  Algorithm \ref{left-compression1}  to $F$ and let $G$ be the final hypergraph obtained. 
Thus $G$ is a $P_t$-free $3$-graph that is dense and left-compressed, and $\lambda(G)\ge \lambda(K_{8}^3)-0.005$.
Assume that $V(G)=[n]$.
If $n\le 2t$ then $\lambda(G) \le \lambda(K_{2t}^3)$.
Furthermore, if $F$ is $K_{2t}^3$-free then $G$ is $K_{2t}^3$-free by Lemma \ref{lemmaP_3} and  Lemma \ref{lemmaP_4},
and hence $\lambda(G)\le \lambda(K_{2t}^{3-})<\lambda(K_{2t}^3)$.
Now suppose that $n \ge 2t+1$.
We use induction on $t\ge 2$. The base case $t=2$ is guaranteed by Lemma \ref{p_2}. Suppose that the result holds for $t-1$.
 Let $\vec{x}=(x_1,\dots,x_n)$ be an optimum weighting of $G$. Denote $L(1)=\{e \in G: 1 \in e\}$. Then
$$ \lambda(G)=\lambda(G,\vec{x})=\lambda(L(1),\vec{x})+\lambda(G[[2,n]],\vec{x'}), $$
where $\vec{x'}=(x_2,\dots,x_n)$.
Clearly, $\lambda(L(1),\vec{x})\le x_1\sum_{2\le i<j\le n}x_ix_j \le {1 \over 2}x_1(1-x_1)^2$.
For the second term, we divide it into two cases according to $t=3$ or $4$.

Case $t=3$.
We claim that $G[[2,n]]=\emptyset$. Otherwise since $G$ is left-compressed and so does $G[[2,n]]$ (on vertex set $[2,n]$), then
$234\in G[[2,n]]$. $G$ covers pairs and $n\ge 7$ implies that $167\in G[[2,n]]$ and $125 \in G[[2,n]]$.
Then $\{167,125,234\}$ forms a copy of $P_3$, a contradiction.
Hence
$$
 \lambda(G)= \lambda(L(1),\vec{x})+\lambda(G[[2,n]],\vec{x})
 \le {1 \over 2}x_1(1-x_1)^2
 \le {1\over 4}\left({2x_1+1-x_1+1-x_1\over 3}\right)
 = {1 \over 4}\left({2\over 3}\right)^3
 ={2\over 27}.
$$
Note that by Fact \ref{K_t^r} and \ref{K_6^3-}, we have $\lambda(K_6^3)={5\over 54}$ and $\lambda(K_6^{3-})=0.0887$.
Let $$\epsilon(3)=\min\left\{{5\over 54}-0.0877, {5\over 54}-{2\over 27}\right\}>0.0048$$ and we are done for the case $t=3$.

Case $t=4$. We claim that $G[[2,n]]$ is $P_{3}$-free. Otherwise suppose that there is a copy of $P_{3}$, denoted by $P'$, in $G[[2,n]]$.
Since $n\ge 9$ and $|V(P')|=7$, there is $v \in \{2,\dots,n\} \setminus V(P')$.
Let $u\in V(P')$ such that $d_{P'}(u)=1$.
Since $G$ covers pairs and $G$ is left-compressed, we have $1(n-1)n\in G$. Then $1uv \in G$.
Hence $P' \cup \{1uv\}$ forms a copy of $P_4$ in $G$, a contradiction.

Let $H$ be a dense subgraph of $G[[2,n]]$ with $\lambda(H)=\lambda(G[[2,n]])$. Note that $H$ must be an induced subgraph of $G[[2,n]]$.
Every weight in an optimum weighting $\vec{y}$ of $H$ is positive. If $H$ is not an induced subgraph of $G[[2,n]]$, let $H'$ be an induced subgraph of $G[[2,n]]$ on $V(H)$, then $\lambda(G[[2,n]])\ge \lambda(H',\vec{y})>\lambda(H,\vec{y})=\lambda(G[[2,n]])$, a contradiction.
Note that $\lambda(G[[2,n]])$ is left-compressed on $[2,n]$.
Suppose $V(H)=\{i_1,\dots,i_{s-1}\}$, where $i_1<\dots<i_{s-1}$ and $s$ is a positive integer.
For each $i_{j_1}i_{j_2}i_{j_3} \in H$ with $j_1<j_2<j_3$, since $G[[2,n]]$ is left-compressed on $[2,n]$ and $H$ is an induced subgraph of $G[[2,n]]$,  $i_{k_1}i_{k_2}i_{k_3} \in H$ for every $\{i_{k_1},i_{k_2},i_{k_3}\} \subseteq V(H)$ satisfying $j_l \ge k_l$ for every $l\in [3]$.
Relabel the vertex $i_j$ of $H$ with $j+1$ for each $j\in [s-1]$, then $H$ is left-compressed on vertex set $[2,s]$.

If $s\ge 8$, then $\lambda(H)\le {2\over 27}$ following from the case of $t=3$ with the number of vertices not less than $7$.
Otherwise $s\le 7$.
 When $s\le 6$, then $\lambda(H)\le \lambda(K_5^3)={2\over 25}$.
 Now assume that $s= 7$, we claim that $346 \notin H$. Otherwise suppose that $346 \in H$. Since $H$ is left-compressed, $\{346,235,127,189\}$ forms a copy of $P_4$ in $G$, a contradiction.
 Thus all $ijk$ with $i\ge 3$, $j\ge 4$, $k\ge 6$ and $i+j+k>13$ are not in $H$.
 Then $H \subseteq \{234,235,245,345,236,246,256,345,346,356,456,237,247,257,267\}:=H^*$.
 Since the maximum clique of $H^*$ is on $[2,6]$ and the number of edges of $H^*$ is $15$,
 then by Theorem \ref{theoPZ}, we have $\lambda(H^*)= \lambda(K_5^3)={2\over 25}$.
 Hence $\lambda(G[[2,n]])\le \max\left\{{2\over 27},{2\over 25}\right\}={2\over 25}.$
 Then
\begin{eqnarray*}
 \lambda(G)&=& \lambda(L(1),\vec{x})+\lambda(G[[2,n]],\vec{x})\\
 &\le& {1 \over 2}x_1(1-x_1)^2+{2\over 25}(1-x_1)^3\\
 &=&{1\over 100}{1\over 21^2}(21-21x_1)^2(42x_1+8)\\
 &\le&{1\over 100}{1\over 21^2}\left({21+21+8\over 3}\right)^3\\
 &=&{1250\over 11907}.
\end{eqnarray*}
Note that by Fact \ref{K_t^r} and \ref{K_8^3-}, we have $\lambda(K_8^3)={7\over 64}$ and $\lambda(K_8^{3-})=0.1077$.
 Let
$$\epsilon(4)=\min\left\{{7\over 64}-0.1077, {7\over 64}-{1250\over 11907}\right\}\ge 0.0016$$ and we are done for the case $t=4$.
\end{proof}

\subsection{Proof of Lemma \ref{lemmaF_i}}

Given a $3$-graph $\mathcal{F}$ that covers pairs and $a, b\in V(\mathcal{F})$, let {\em Cover$(a, b)$} denote the property that $\{a, b\}$ is covered by $\mathcal{F}$
%, $ y_{ab}$ the possible vertex in $V(\mathcal{F})$ such that $aby_{ab} \in \mathcal{F}$,
and $N_{ab}=\{v\in V(\mathcal{F}): abv \in \mathcal{F}\}$.
We repeatly use the property that $\mathcal{F}$ covers pairs to prove Lemma \ref{lemmaF_i}.
\medskip
\newline
{\em Lemma \ref{lemmaF_i}.
Let $\mathcal{F}$ be a $P_4$-free $3$-graph with $ n\ge 9$ vertices that covers pairs. Then $\mathcal{F}$ is $F_1$-free and $F_2$-free. Furthermore, if $\lambda(\mathcal{F}) \ge \lambda(K_{8}^3)-0.005$, then $\mathcal{F}$ is also $F_3$-free.
}
\medskip

\begin{proof}
Note that since $\mathcal{F}$ covers pairs, we have $N_{uv}\neq \emptyset$ for every pair of vertices $u,v\in V(\mathcal{F})$.

(i) First suppose that $\mathcal{F}$ contains a copy of $F_1$, denoted as $F$. Denote  $V(F)=\{a, b_1, b_2, b_3, b_4, $ $c_1, c_2, c_3, c_4, d\}$ and $E(F)=\{c_1c_2a, ac_3c_4, b_1b_2d, db_3b_4\}$.

Claim 1. $N_{b_ic_j}\subseteq \{b_{i'}, c_{j'}\}$, where $i, j, i',j'$ satisfy $\{i, i'\}=\{1, 2\}$ or $\{3, 4\}$ and $\{j, j'\}=\{1, 2\}$ or $\{3, 4\}$.
By symmetry, we only prove that $N_{b_1c_1} \subseteq \{b_2, c_2\}$.
If $a\in N_{b_1c_1}$, then $\{c_3c_4a,ac_1b_1,b_1b_2d,$ $db_3b_4\}$ forms a copy of $P_4$, a contradiction. Similarly, all $d,c_3,c_4,b_3,b_4$ and vertices in $V(\mathcal{F})\setminus V(F)$ are not in $N_{b_1c_1}$.
Thus we complete the proof of the claim.

By Claim 1, $N_{b_1c_1}\neq \emptyset$ and $N_{b_1c_1} \subseteq \{b_2, c_2\}$, then $b_1b_2c_1 \in \mathcal{F}$ or $b_1c_1c_2 \in \mathcal{F}$.
By symmetry, assume that $b_1b_2c_1 \in \mathcal{F}$.
Then $b_3b_4c_3 \notin \mathcal{F}$.
 Otherwise $\{b_1b_2c_1, c_1c_2a,ac_3c_4,b_3b_4c_3 \}$ forms a copy of $P_4$, a contradiction.
So by Claim 1, Cover$(b_3,c_3)$ implies that $b_3c_3c_4 \in \mathcal{F}$.
Now we show that $b_3b_4c_2 \notin \mathcal{F}$. Otherwise $\{b_1b_2c_1, c_1c_2a, b_3b_4c_2, b_3c_3c_4 \}$ forms a copy of $P_4$, a contradiction.
Thus by Claim 1, Cover$(b_4,c_2)$ implies that $ b_4c_1c_2 \in \mathcal{F}$.
Then $\{b_3c_3c_4, db_3b_4, b_4c_1c_2, b_1b_2c_1\}$ forms a copy of $P_4$, a contradiction.

\medskip

(ii) Suppose that $\mathcal{F}$ contains a copy of $F_2$, denoted as $F'$.
Denote  $V(F')=\{a_1, a_2, a_3, c_1, c_2, c_3, c_4, $ $b_1, b_0, b_2\}$ and $E(F')=\{a_1a_2a_3, c_1c_2b_1, b_1b_0b_2, b_2c_3c_4\}$.

{\em Case 1.} $c_1c_2a_i, c_3c_4a_i \in \mathcal{F}$ for all $i\in [3]$.
Cover$(c_2,c_3)$ implies that either $c_1c_2c_3 \in \mathcal{F}$ or $c_2c_3c_4 \in \mathcal{F}$.
Indeed, $a_i \notin N_{c_2c_3}$ for each $i\in [3]$, otherwise without loss of generality suppose that $c_2c_3a_1 \in \mathcal{F}$, then $\{a_1a_2a_3,a_1c_2c_3,c_3c_4b_2,b_2b_0b_1\}$ forms a copy of $P_4$ in $\mathcal{F}$, a contradiction.
$b_1, b_2\notin N_{c_2c_3}$, otherwise without loss of generality assume that $c_2c_3b_1 \in \mathcal{F}$, then
$\{b_0b_1b_2, b_1c_2c_3, c_3c_4a_3, a_3a_2a_1\}$ forms a copy of $P_4$ in $\mathcal{F}$, a contradiction.
$b_0 \notin N_{c_2c_3}$, otherwise
$\{a_3a_2a_1,$ $c_1c_2a_1, c_2c_3b_0, b_0b_1b_2\}$ forms a copy of $P_4$ in $\mathcal{F}$, a contradiction.
$ x\notin N_{c_2c_3}$ for some vertex $x\in V(\mathcal{F})\setminus V(F)$, otherwise
$\{b_0b_1b_2, b_2c_3c_4, c_2c_3x, c_1c_2a_1\}$ forms a copy of $P_4$ in $\mathcal{F}$, a contradiction.
Therefore either $c_1c_2c_3 \in \mathcal{F}$ or $c_2c_3c_4 \in \mathcal{F}$.
Without loss of generality assume that $c_1c_2c_3 \in \mathcal{F}$.

Consider the property Cover$(a_1, b_0)$.
$a_2, a_3 \notin N_{a_1b_0} $, otherwise by symmetry, assume that $a_1b_0a_2 \in \mathcal{F}$, then
$\{a_1b_0a_2, b_0b_1b_2, b_2c_3c_4, c_1c_2c_3\}$ forms a copy of $P_4$ in $\mathcal{F}$, a contradiction.
$ b_1, b_2 \notin N_{a_1b_0}$, otherwise by symmetry, assume that $a_1b_0b_1 \in \mathcal{F}$, then
$\{c_3c_4a_3, a_3a_2a_1, a_1b_0b_1, b_1c_1c_2\}$ forms a copy of $P_4$ in $\mathcal{F}$, a contradiction.
$c_i \notin N_{a_1b_0}$ for each $i\in [4]$, otherwise by symmetry, assume that $a_1b_0c_1 \in \mathcal{F}$, then
$\{c_3c_4a_3, a_3a_2a_1, $ $a_1b_0c_1, b_1c_1c_2\}$ forms a copy of $P_4$ in $\mathcal{F}$, a contradiction.
$x \notin N_{a_1b_0}$ for some vertex $x\in V(\mathcal{F})\setminus V(F)$, otherwise
$\{a_3a_2a_1, a_1b_0x, b_0b_1b_2, b_2c_3c_4\}$ forms a copy of $P_4$ in $\mathcal{F}$, a contradiction.
Hence $\{a_1, b_0\}$ is not covered by any edge of $\mathcal{F}$, which contradicts that $\mathcal{F}$ covers pairs.

{\em Case 2.}
$c_1c_2a_i$ or $c_3c_4a_i \notin \mathcal{F}$ for some $i \in [3]$.
Without loss of generality assume that $c_1c_2a_1 \notin \mathcal{F}$.
Consider the property Cover$(a_1, c_1)$. Then $N_{a_1c_1}=\{b_2\}$.
Since if $a_2\in N_{a_1c_1}$, then $\{a_1a_2c_1, c_1c_2b_1, b_1b_0b_2,$ $b_2c_3c_4\}$ forms a copy of $P_4$ in $\mathcal{F}$, a contradiction.
Similarly, $a_3\notin N_{a_1c_1}$.
If $b_1\in N_{a_1c_1}$, then $\{a_3a_2a_1,$ $a_1c_1b_1, b_1b_0b_2,b_2c_3c_4\}$ forms a copy of $P_4$ in $\mathcal{F}$, a contradiction.
Similarly, $b_0,c_3,c_4\notin N_{a_1c_1}$.
If $x\in N_{a_1c_1}$ for some vertex $x\in V(\mathcal{F})\setminus V(F')$, then $\{a_3a_2a_1,a_1xc_1,c_1c_2b_1,b_1b_0b_2\}$ forms a copy of $P_4$ in $\mathcal{F}$, a contradiction.
Similarly, $N_{a_1c_2}=\{b_2\}$.
We claim that $c_3c_4a_2, c_3c_4a_3 \notin \mathcal{F}$; otherwise without loss of generality assume that $c_3c_4a_2 \in \mathcal{F}$, then
$\{c_3c_4a_2, a_3a_2a_1, a_1c_1b_2, b_1b_2b_3\}$ forms a copy of $P_4$ in $\mathcal{F}$, a contradiction.

Hence $c_1c_2a_1 \notin \mathcal{F}$ implies that $a_1c_1b_2,a_1c_2b_2 \in \mathcal{F}$ and $c_3c_4a_2,c_3c_4a_3 \notin \mathcal{F}$.
Similarly, $c_3c_4a_2,c_3c_4a_3 \notin \mathcal{F}$ imply $c_1c_2a_3,c_1c_2a_2  \notin \mathcal{F}$ and $c_1c_2a_2  \notin \mathcal{F}$ implies that $c_3c_4a_1  \notin \mathcal{F}$. Thus
$c_1c_2a_i,c_3c_4a_i \notin \mathcal{F}$ for each $i\in[3]$.
Then $N_{a_ic_j}=\{b_{k_j}\}$ for every $i\in [3],j\in [4]$, where $k_1,k_2=2$ and $k_3,k_4=1$.
Thus
$\{a_3b_1c_3, a_2b_1c_4, a_2b_2c_1, a_1b_2c_2\}$ forms a copy of $P_4$ in $\mathcal{F}$, a contradiction.
So $ \mathcal{F}$ is $F_2$-free.
\medskip

(iii) Let $\mathcal{F}$ satisfy $\lambda(\mathcal{F}) \ge \lambda(K_{8}^3)-0.005$.
Suppose that $\mathcal{F}$ contains a copy of $F_3$, denoted as $F''$.
Let  $V(F'')=\{a_1, a_2, a_3, c_1, c_2, b_1, b_2, d_1, d_2\}$ and $E(F'')=\{a_1a_2a_3, a_1b_1b_2, a_2c_1c_2,  a_3d_1d_2\}$.
We first prove that $V(\mathcal{F})=V(F'')$. Otherwise suppose that $V(\mathcal{F})\setminus V(F_3)\neq \emptyset$, let
$x \in V(\mathcal{F})\setminus V(F_3)$.
The following claim is a simple consequence of $\mathcal{F}$ being $P_4$-free.

%Claim 2. $z \notin N_{b_1c_1}$ for every $z\in \{b_2,c_2\} \cup (V(\mathcal{F})\setminus V(F_3))$. The similar results holds for other pairs $\{u,v\}$, where $ u\in \{b_1,b_2\} $ and $v\in \{c_1,c_2,d_1,d_2\}$ or $ v\in \{c_1,c_2\} $ and $u\in \{d_1,d_2\}$.

We claim that $xb_1b_2, xc_1c_2, xd_1d_2 \notin \mathcal{F}$.
To prove this, we first assume that there are at least two of $xb_1b_2, xc_1c_2, xd_1d_2$ in $\mathcal{F}$.
Suppose $xb_1b_2, xc_1c_2\in \mathcal{F}$.
If $b_1\in N_{xd_2}$, then $\{xb_1d_2, d_2d_1a_3,a_3a_1a_2,a_2c_1c_2\}$ forms a copy of $P_4$ in $\mathcal{F}$, a contradiction.
Similarly, all $b_2, c_1, c_2$ and the vertices in $V(\mathcal{F})\setminus (V(F_3)\cup \{x\})$ (if there exists) are not in $N_{xd_2}$.
If $a_3\in N_{xd_2}$, then $\{b_1b_2x, xd_2a_3, a_1a_2a_3, a_2c_1c_2\}$ forms a copy of $P_4$ in $\mathcal{F}$, a contradiction.
If $a_1\in N_{xd_2}$, then $\{b_1b_2x, xd_2a_1, a_1a_2a_3, a_2c_1c_2\}$ forms a copy of $P_4$ in $\mathcal{F}$, a contradiction.
Similarly, $a_2\notin N_{xd_2}$.
Hence $N_{xd_2} = \{d_1\}$.

Now we show that $\{b_2, c_2\}$ is not covered by any edge of $\mathcal{F}$ and we get a contradiction.
$b_1 \notin N_{b_2c_2}$since otherwise $\{d_2d_1a_3,a_3a_1a_2,a_2c_1c_2, b_1b_2c_2\}$ forms a copy of $P_4$ in $\mathcal{F}$, a contradiction.
Similarly, all $c_1$ and the vertices in $V(\mathcal{F})\setminus (V(F_3)\cup \{x\})$ (if there exists) are not in $N_{b_2c_2}$.
$a_1 \notin N_{b_2c_2}$ since otherwise $\{b_1b_2x, b_2c_2a_1, a_1a_2a_3, a_3d_1d_2\}$ forms a copy of $P_4$ in $\mathcal{F}$, a contradiction.
Similarly, $a_2 \notin N_{b_2c_2}$.
$a_3 \notin N_{b_2c_2}$ since otherwise $\{d_1d_2x, b_1b_2x, b_2c_2a_3, a_1a_2a_3\}$ forms a copy of $P_4$ in $\mathcal{F}$, a contradiction.
$d_1 \notin N_{b_2c_2}$ since otherwise $\{b_1b_2x, b_2c_2d_1, a_3d_1d_2, a_1a_2a_3\}$ forms a copy of $P_4$ in $\mathcal{F}$, a contradiction.
Similarly, $d_2 \notin N_{b_2c_2}$.
Thus, $\{b_2, c_2\}$ is not covered by any edge of $\mathcal{F}$, a contradiction.

Now assume that there is exactly one of $xb_1b_2, xc_1c_2, xd_1d_2$ in $\mathcal{F}$. Without loss of generality assume that $xb_1b_2 \in \mathcal{F}$ and $xc_1c_2, xd_1d_2 \notin \mathcal{F}$.
Consider the property Cover$\{x, c_2\}$.
$b_1 \notin N_{xc_2}$since otherwise $\{d_2d_1a_3,a_3a_1a_2,a_2c_1c_2, c_2b_1x\}$ forms a copy of $P_4$ in $\mathcal{F}$, a contradiction.
Similarly, all $b_2, d_1, d_2, c_1$ and the vertices in $V(\mathcal{F})\setminus (V(F_3)\cup \{x\})$ (if there exists) are not in $N_{xc_2}$.
$a_1 \notin N_{xc_2}$since otherwise $\{b_1b_2x, xc_2a_i, a_1a_2a_3, a_3d_1d_2\}$ forms a copy of $P_4$ in $\mathcal{F}$, a contradiction.
Hence $N_{xc_2}\neq \emptyset$ implies that $y_{xc_2} = a_3$, that is, $xc_2a_3 \in \mathcal{F}$.
Similarly, $a_2 \notin N_{xc_2}$.
Then $N_{xc_2}= \{a_3\}$.
Similarly, $N_{xd_2}= \{a_2\}$.

Consider the property Cover$\{b_2,c_2\}$.
$b_1 \notin N_{b_2c_2}$since otherwise $\{d_2d_1a_3,a_3a_1a_2,a_2c_1c_2, c_2b_1b_2\}$ forms a copy of $P_4$ in $\mathcal{F}$, a contradiction.
Similarly, all $c_1, x$ and the vertices in $V(\mathcal{F})\setminus (V(F_3)\cup \{x\})$ (if there exists) are not in $N_{b_2c_2}$.
$a_1 \notin N_{xc_2}$since otherwise $\{b_1b_2x, b_2c_2a_1, a_1a_2a_3, a_3d_1d_2\}$ forms a copy of $P_4$ in $\mathcal{F}$, a contradiction.
Similarly, $a_2 \notin N_{b_2c_2}$.
$d_1 \notin N_{xc_2}$since otherwise $\{xb_1b_2, b_2c_2d_1, d_1d_2a_3, a_1a_2a_3\}$ forms a copy of $P_4$ in $\mathcal{F}$, a contradiction.
Similarly, $d_2 \notin N_{b_2c_2}$.
Hence $N_{b_2c_2}=\{a_3\}$. Then
$\{a_1b_1b_2, b_2c_2a_3, a_3d_1d_2, d_2a_2x\}$ forms a copy of $P_4$ in $\mathcal{F}$, a contradiction.
Hence $xb_1b_2, xc_1c_2, xd_1d_2 \notin \mathcal{F}$.

Now we claim that for every  $A \in \{ xb_ia_2,xb_ia_3, xc_ia_1,xc_ia_3, xd_ia_1,xd_ia_2:i\in [2]\}$, we have $A \notin \mathcal{F}$. By symmetry, suppose that $xb_2a_2 \in \mathcal{F}$.
Consider the property Cover$\{b_1, c_1\}$. We first show that all $d_1,d_2,a_3$ are not in $N_{b_1c_1}$; otherwise, denote such an edge by $e$, then
$\{xb_2a_2, a_2c_1c_2, e, d_1d_2a_3\}$ forms a copy of $P_4$ in $\mathcal{F}$, a contradiction.
Similar to the above $b_1 \notin N_{b_2c_2}$, we have $b_2 \notin N_{b_1c_1}$. Similarly, $c_2$ and the vertices in $V(\mathcal{F})\setminus (V(F_3))$ are not in $N_{b_1c_1}$.
Hence we have $b_1c_1a_1 \in \mathcal{F}$ or $b_1c_1a_2 \in \mathcal{F}$.
By symmetry, $b_1c_2a_1 \in \mathcal{F}$ or $b_1c_2a_2 \in \mathcal{F}$.
We claim that $b_1c_1a_1 \notin \mathcal{F}$ or $b_1c_2a_2 \notin \mathcal{F}$. Otherwise
let $e' \in {\{b_2,x\}\choose 1}\times{\{a_3,d_1,d_2\} \choose 2}$, if $e' \in \mathcal{F}$, then $\{e', xa_2b_2, b_1c_2a_2, b_1c_1a_1\}$ forms a copy of $P_4$ in $\mathcal{F}$, a contradiction. Hence $\left({\{b_2,x\} \choose 1}\times{\{a_3,d_1,d_2\} \choose 2}\right) \cap \mathcal{F} = \emptyset$.
Then Cover($b_2, d_1$) implies that $b_2d_1a_1 \in \mathcal{F}$ or $b_2d_1a_2 \in \mathcal{F}$. Then
$\{d_1d_2a_3, b_2d_1a_1, a_1c_1b_1, b_1c_2a_2\}$ or $\{d_2a_3d_1, d_1b_2a_2, a_2c_2b_1,b_1a_1c_1 \}$ forms a copy of $P_4$ in $\mathcal{F}$, a contradiction. Similarly, $b_1c_1a_2 \notin \mathcal{F}$ or $ b_1c_2a_1 \notin \mathcal{F}$.
Hence $b_1c_1a_1,b_1c_2a_1 \in \mathcal{F}$ or $b_1c_1a_2,b_1c_2a_2 \in \mathcal{F}$.

{\em Case 1.} $b_1c_1a_1,b_1c_2a_1 \in \mathcal{F}$.
Consider the property Cover$\{x, d_1\}$.
$b_1\notin N_{xd_1}$since otherwise $\{xb_1d_1,d_1d_2a_3,a_3a_1a_2,a_2c_1c_2\}$ forms a copy of $P_4$ in $\mathcal{F}$, a contradiction.
Similarly, all $b_2, c_1, c_2$ and the vertices not in $V(F'')$ are not in $N_{xd_1}$.
$a_1\notin N_{xd_1}$since otherwise  $\{d_1d_2a_3, d_1xa_1, a_1c_1b_1, c_1c_2a_2\}$ forms a copy of $P_4$ in $\mathcal{F}$, a contradiction.
$a_2\notin N_{xd_1}$since otherwise  $\{d_1d_2a_3, d_1xa_2, a_2c_1c_2, b_1c_2a_1\}$ forms a copy of $P_4$ in $\mathcal{F}$, a contradiction.
$a_3\notin N_{xd_1}$since otherwise  $\{xd_1a_3, xb_2a_2, a_2c_1c_2, b_1c_2a_1\}$ forms a copy of $P_4$ in $\mathcal{F}$, a contradiction.
Thus $\{x, d_1\}$ is not covered in any edge of $\mathcal{F}$, a contradiction.

{\em Case 2.} $b_1c_1a_2,b_1c_2a_2 \in \mathcal{F}$.
Consider the property Cover$\{x, d_1\}$.
The same as the above in Case 1, all $b_1, b_2, c_1, c_2$ and other vertices not in $V(F'')$ are not in $N_{xd_1}$.
$a_1\notin N_{xd_1}$since otherwise $\{d_1d_2a_3, d_1xa_1, a_1b_1b_2, b_1c_1a_2\}$ forms a copy of $P_4$ in $\mathcal{F}$, a contradiction.
$a_2\notin N_{xd_1}$since otherwise $\{d_1d_2a_3, d_1xa_2, a_2c_1b_1, b_1b_2a_1\}$ forms a copy of $P_4$ in $\mathcal{F}$, a contradiction.
Suppose $a_3\in N_{xd_1}$.
Now consider the property Cover$\{c_1,d_2\}$.
$b_1\notin N_{c_1d_2}$since otherwise  $\{xb_2a_2, a_1a_2a_3,a_3d_1d_2,c_1d_2b_1\}$ forms a copy of $P_4$ in $\mathcal{F}$, a contradiction.
$b_2\notin N_{c_1d_2}$since otherwise $\{b_1c_2a_2, a_1a_2a_3,a_3d_1d_2,c_1d_2b_2\}$ forms a copy of $P_4$ in $\mathcal{F}$, a contradiction.
$d_1\notin N_{c_1d_2}$since otherwise $\{b_1b_2a_1,a_1a_3a_2,a_2c_2c_1,c_1d_2d_1\}$ forms a copy of $P_4$ in $\mathcal{F}$, a contradiction.
Similarly, $c_2\notin N_{c_1d_2}$. Then $N_{c_1d_2}\subseteq\{a_1,a_2,a_3\}$.
If $a_1\notin N_{c_1d_2}$, then $\{xd_1a_3, xb_2a_2, a_2c_1c_2, c_1d_2a_1\}$ forms a copy of $P_4$ in $\mathcal{F}$, a contradiction.
If $a_2\notin N_{c_1d_2}$, then $\{a_1b_1b_2, xb_2a_2, a_2c_1d_2, d_2d_1a_3\}$ forms a copy of $P_4$ in $\mathcal{F}$, a contradiction.
If $a_3\notin N_{c_1d_2}$, then $\{c_1d_2a_3, xd_1a_3, xb_2a_2, a_1b_1b_2\}$ forms a copy of $P_4$ in $\mathcal{F}$, a contradiction.
Thus $\{x, d_1\}$ is not covered in any edge of $\mathcal{F}$, a contradiction.

Hence for every  $A \in \{ xb_ia_2,xb_ia_3, xc_ia_1,xc_ia_3, xd_ia_1,xd_ia_2:i\in [2]\}$, we have $A \notin \mathcal{F}$.
Let $y\in \{b_1,b_2, c_1, c_2, d_1, d_2\}$, consider the property Cover$(x,y)$.
We have proved that $N_{xy}\cap (\{b_1,b_2,c_1, c_2, d_1, d_2,$ $a_1,a_2,a_3\}\setminus \{y\})=\emptyset$.
$z\notin N_{xy}$ for some vertex not in $V(F'')\cup\{x\}$since otherwise $xyz$ connects to the endpoint of a linear path of 3 in $F''$ and we get a copy of $P_4$ in $\mathcal{F}$, a contradiction.
Hence $xb_ia_1, xc_ia_2, xd_ia_3 \in \mathcal{F}$ for every $i\in [2]$.
Consider the property Cover($b_2, c_2$).
We have proved that $b_1, c_1$ and vertex not in $V(F'')$ are not in $N_{b_2c_2}$.
Let $i=1,2$, we claim that $b_2c_2d_i \notin \mathcal{F}$; otherwise $\{b_1b_2a_1,a_1a_3a_2,a_2c_2c_1,c_1d_2d_1\}$ forms a copy of $P_4$ in $\mathcal{F}$, a contradiction.
$\{xb_1a_1, a_1a_2a_3, a_3d_1d_2, b_2c_2d_i\}$ forms a copy of $P_4$ in $\mathcal{F}$, a contradiction.
If $b_2c_2a_1 \in \mathcal{F}$, then
$\{d_1a_3x,xb_1a_1, a_1b_2c_2, c_2c_1a_2\}$ forms a copy of $P_4$ in $\mathcal{F}$, a contradiction.
If  $b_2c_2a_2 \in \mathcal{F}$, then $\{a_1b_1b_2, b_2c_2a_2,$ $a_2c_1x,xd_1a_3\}$ forms a copy of $P_4$ in $\mathcal{F}$, a contradiction.
If $b_2c_2a_3 \in \mathcal{F}$, then
$\{d_1d_2a_3, a_3b_2c_2, a_1b_2x, xc_1a_2\}$ forms a copy of $P_4$ in $\mathcal{F}$, a contradiction.
So $\{b_2,c_2\}$ is not covered by any edge of $\mathcal{F}$, a contradiction.
Hence $V(\mathcal{F})=V(F_3)$.

We deduce the value $\lambda(\mathcal{F})$.
Denote $B=\{b_1,b_2,c_1,c_2,d_1,d_2\}$, $E' = \{b_1b_2x,c_1c_2y,d_1d_2z:x\in B\setminus \{b_1,b_2\},y\in B\setminus \{c_1,c_2\},z\in B\setminus \{d_1,d_2\}\}$.
Since for every $e\in E'$, $e \notin \mathcal{F}$, then $\mathcal{F} \subseteq {V(\mathcal{F}) \choose 3}\setminus E':= \mathcal{G}$. Then
$ \lambda(\mathcal{F}) \le \lambda(\mathcal{G}) $. Let $\vec{x}$ be an optimum weighting of $\mathcal{G}$. By Lemma \ref{Equivalent}, we can assume that
$x_{a_1}=x_{a_2}=x_{a_3}=x$, $x_{b_1}=x_{b_2}=x_{c_1}=x_{c_2}=x_{d_1}=x_{d_2}=y$. So $3x+6y=1$.
Then $ \lambda(\mathcal{G}) = x^3+8y^3+18x^2y+45xy^2$. Let $L(x,y,\gamma)=x^3+8y^3+18x^2y+45xy^2-\gamma(3x+6y-1)$.
Then ${\partial L \over \partial x}={\partial L \over \partial y}={\partial L \over \partial \gamma}=0$ implies that
$y={\sqrt{873}-15 \over 162}$. So $$ \lambda(\mathcal{F}) \le \lambda(\mathcal{G}) \le 0.1035<7/ 64= \lambda(K_8^3),$$ which contradicts that $\lambda(\mathcal{F}) \ge \lambda(K_{8}^3)-0.005$.
This completes the proof.
\end{proof}

\section{Tur\'an numbers of the extensions}

If $L$ is a hypergraph on $[t]$, then a {\it blowup} of $L$ is a hypergraph $G$
whose vertex set can be partitioned into $V_1,\ldots, V_t$ such that
$E(G)=\bigcup_{e\in L} \prod_{i\in e} V_i$. Let $T_{t}^{r}(n)$ be the balanced blowup of $K_{t}^r$ on $n$ vertices, i.e., $V(T_{t}^{r}(n))=V_1 \cup V_2 \cup \dots \cup V_t$ such that $V_i \cap V_j=\emptyset$ for every $i \neq j $ and $|V_1| \le |V_2| \le \dots \le |V_t|\le |V_1|+1$, and $E(T_{t}^{r}(n))=
\{S\in \binom{[n]}{r}: \forall i\in [t], |S\cap V_i|\leq 1\}$.
Let $t_{t}^{r}(n)=|T_{t}^{r}(n)|$.
 The main result in this section is as follows.

 \begin{figure}[H]
\centering
\begin{minipage}{4cm}
\includegraphics[width=1.2\textwidth, height=1\textwidth]{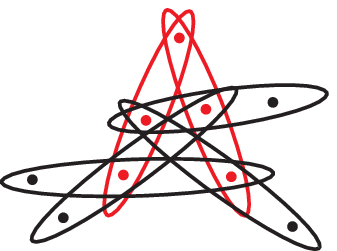}
\label{fig PP2}
\caption{ $H^{P_2}$}
\end{minipage}
\end{figure}

\begin{theo}\label{P_t-lagrangian}
Let $F\in \{P_3,P_4\}$. Then $ex(n,H^{F})= t_{|V(F)|-1}^{3}(n)$ for sufficiently large $n$. Moreover, if $n$ is sufficiently large and $G$ is an $H^{F}$-free $3$-graph on $[n]$ with $|G|=t_{|V(F)|-1}^{3}(n)$, then $G\cong T_{|V(F)|-1}^{3}(n)$.
\end{theo}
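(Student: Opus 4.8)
The plan is to follow the stability-plus-cleaning scheme of Pikhurko and subsequent authors, feeding in the Lagrangian input of Theorem \ref{theoremP_3P_4}. Throughout write $F=P_t$ with $t\in\{3,4\}$, so that $|V(F)|-1=2t$ (indeed $P_3$ has $7$ and $P_4$ has $9$ vertices) and the target extremal graph is $T^3_{2t}(n)$. For the lower bound I would first check that $T^3_{2t}(n)$ is $H^F$-free. Since $H^F$ is an extension it \emph{covers pairs}; hence in any embedding of $H^F$ into a blow-up of $K^3_{2t}$, two vertices of $H^F$ placed in the same part would have their covering edge meet at most two parts, which is impossible. As $|V(H^F)|>2t$ (it is $|V(F)|$ plus one extra vertex for each of the many uncovered pairs of $F$), no such embedding exists, so $T^3_{2t}(n)$, being a subgraph of a blow-up of $K^3_{2t}$, is $H^F$-free. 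This gives $ex(n,H^F)\ge t^3_{2t}(n)$.

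By Proposition \ref{relationlt}(ii) and Theorem \ref{theoremP_3P_4}, $\pi(H^F)=\pi_\lambda(F)=3!\lambda(K^3_{2t})$, and hence (Fact \ref{K_t^r}) $t^3_{2t}(n)=(1+o(1))\,ex(n,H^F)$. The core of the argument is a stability statement: for every $\gamma>0$ there are $\delta>0$ and $n_0$ such that every $H^F$-free $3$-graph $G$ on $n\ge n_0$ vertices with $e(G)\ge ex(n,H^F)-\delta n^3$ admits a partition $V(G)=V_1\cup\cdots\cup V_{2t}$ in which all but at most $\gamma n^3$ edges meet each $V_i$ in at most one vertex. I would prove this by the standard Lagrangian-stability argument (compactness, as in \cite{Pikhurko}): a sequence of counterexamples yields a limit object that is $H^F$-free, has edge density $3!\lambda(K^3_{2t})$, yet is not close to a blow-up of $K^3_{2t}$; because $H^F$ covers pairs and the density is extremal, this limit is a full blow-up of a finite \emph{dense} $H^F$-free $3$-graph $D$ with $\lambda(D)=\lambda(K^3_{2t})$ (as in the proof of Proposition \ref{relationlt}). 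Since $D$ is dense it covers pairs (Fact \ref{dense}); if $D$ contained a copy of $P_t$ one could embed $H^F$ into the full blow-up of $D$ by routing each uncovered pair of $P_t$ through a fresh vertex (its covering third part, which exists because $D$ covers pairs, is automatically distinct from the two) — a contradiction. Hence $D$ is $P_t$-free, so Theorem \ref{theoremP_3P_4} forces $D\supseteq K^3_{2t}$; a dense $3$-graph with $\lambda(D)=\lambda(K^3_{2t})$ containing $K^3_{2t}$ must have exactly $2t$ vertices, so $D=K^3_{2t}$, whose densest blow-up is the balanced one — contradicting ``not close to $T^3_{2t}(n)$''.

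To obtain the exact result, let $G$ be an $H^F$-free $3$-graph on $[n]$ with $e(G)=ex(n,H^F)\ge t^3_{2t}(n)$, and fix the partition $V_1\cup\cdots\cup V_{2t}$ from stability with $\gamma$ tiny. I would first argue the partition is nearly balanced (a part that is too small could be enlarged while keeping $e(G)$ at least $e(T^3_{2t}(n))$), and then \emph{symmetrize within parts}: repeatedly replacing a vertex by a clone of the vertex of its part with the largest link does not decrease $e(G)$ and — crucially, since $H^F$ covers pairs, no copy of $H^F$ can use two clones of one vertex — keeps $G$ $H^F$-free. Iterating turns $G$ into a blow-up of some $3$-graph $D'$ on $2t$ vertices with at least $t^3_{2t}(n)$ edges; for $n$ large the parts have size $\gg |V(H^F)|$, so (as in the stability step) $D'$ is $P_t$-free, and Theorem \ref{theoremP_3P_4} together with the fact that each missing edge of $D'$ costs $\Omega(n^3)$ edges of the blow-up forces $D'=K^3_{2t}$ and the blow-up to be balanced. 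Since every symmetrization step was then an equality, $G$ itself was already $T^3_{2t}(n)$; in particular $ex(n,H^F)=t^3_{2t}(n)$.

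The main obstacle is this last step, upgrading the approximate structure to an exact one. One must control the non-crossing (``bad'') edges and the absent crossing (``missing'') edges simultaneously, show via $H^F$-freeness that even a single bad edge at a vertex — set against an almost-complete blow-up of $K^3_{2t}$ around it — already produces a copy of $H^F$ (there is ample room to place the $(r-2)$-vertex gadgets and the rest of $P_t$), and verify that symmetrization preserves extremality. Making the quantitative bounds close, so that $\gamma$ (hence $\delta$) can be chosen admissibly and the ``$\Omega(n^3)$ per missing edge'' beats the ``few bad edges'', is where the genuine work lies; it is precisely the Lagrangian gap $\epsilon(t)$ of Theorem \ref{theoremP_3P_4} that makes this possible.
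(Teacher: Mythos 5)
Your lower bound and the density calculation via Proposition \ref{relationlt} match the paper, but your overall route differs from the one actually used. The paper does not re-derive stability or the exact step at all: it verifies the hypotheses of Lemma \ref{stability} (namely that $P_t$ has $m+1=2t+1$ vertices, two of which lie in an edge and have degree $1$; that $\pi_\lambda(P_t) = [m]_3/m^3$; and that the $\epsilon$-gap from Theorem \ref{theoremP_3P_4} holds for $P_t$-free and $K_{2t}^3$-free $3$-graphs) and then invokes Theorems \ref{theo6} and \ref{BIJ-main} from \cite{BIJ} as a black box. Your proposal instead reconstructs the stability statement by a compactness/limit-object argument and then tries to finish by symmetrizing within parts. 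That first half is a legitimate alternative (Pikhurko-style), and in particular your observation that a dense limit graph $D$ containing $P_t$ and covering pairs would embed $H^{P_t}$ in its full blow-up is sound; what the paper's reliance on \cite{BIJ} buys is precisely that one does not need to make the compactness argument rigorous or to redo the ``near-$m$-partite $\Rightarrow$ exactly $T_m^r(n)$'' cleanup, which is already carried out there in a form adapted to extensions.

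There is, however, a genuine gap in your final step. The assertion ``Since every symmetrization step was then an equality, $G$ itself was already $T^3_{2t}(n)$'' does not follow: equality in Zykov-type symmetrization only says the replaced vertex had degree equal to the template vertex, not that it had the same link, so it does not certify that the original $G$ was a blow-up. To close the exact step one must argue directly from $H^F$-freeness that, around a vertex of a near-complete balanced blow-up of $K^3_{2t}$, a single non-transversal edge or a single missing transversal edge already creates or removes the wrong count; this is the content of \cite{BIJ}'s Theorem \ref{BIJ-main} and is where the Lagrangian gap $\epsilon(t)$ is actually consumed. You flag this as ``where the genuine work lies,'' which is honest, but as written the step would fail and needs to be replaced by the bad/missing-edge analysis (or by citing Theorem \ref{BIJ-main} as the paper does).

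Two smaller points. First, you should make explicit the hypothesis of Lemma \ref{stability} that your $F$ has $m+1$ vertices with $r-1$ vertices of one edge of degree $1$; this is what makes $H^{F}_{m+1} = H^F$ and lets the $K_m^r$-freeness of $G^*[S]$ be deduced in the paper's argument, and you use the same fact implicitly when you claim $D'$ is $P_t$-free. Second, in the stability step you should note that the covering vertex $w$ for an uncovered pair of the embedded $P_t$ may coincide (as a part of $D$) with a vertex of $P_t$ already used, and it is the blow-up that supplies a fresh representative; you gesture at this but it is the kind of detail that must be handled carefully in a compactness proof since the parts are not a priori large.
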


To prove the theorem, we need several results from  \cite{BIJ}. Similar results are obtained independently in  \cite{NY2}.
Let ${\mathcal{K}}_{p}^F $ denote the family of $r$-graphs $H$ that contains a set $C$ of $p$ vertices such that the subgraph of $H$ induced by $C$
contains a copy of $F$ and such that every pair in $C$ is covered in $H$.
Let $[m]_r=m\times (m-1)\times\dots \times (m-r+1)$.

\begin{defi}{\em (\cite{BIJ})
Let $m,r\ge 2$ be positive integers. Let $F$ be an $r$-graph that has at most $m+1$ vertices satisfying $\pi_{\lambda}(F)\le {[m]_r \over m^r} $. We say that ${\mathcal{K}}_{m+1}^F $ is {\em $m$-stable} if for every real $\varepsilon > 0$ there are a real $\delta > 0$ and an integer $n_1$ such that if $G$ is a ${\mathcal{K}}_{m+1}^F $-free $r$-graph with at least $n\ge n_1$ vertices and more than $({[m]_r \over m^r}-\delta){n \choose r} $ edges, then $G$ can be made $m$-partite by deleting at most $\varepsilon n$ vertices.}
\end{defi}

Given an $r$-graph $G$ and a real $\alpha$ with $0 < \alpha \le 1$, we say that $G$ is {\em $\alpha$-dense} if $G$ has minimum degree at least $\alpha {|V(G)|-1 \choose r-1}$. Let $i,j \in V(G)$, we say $i$ and $j$ are {\em nonadjacent} if $\{i,j\}$ is not contained in any edge of $G$. Given a set $U \subseteq V(G)$, we say $U$ is an {\em equivalence class} of $G$ if for every two vertices $u,v \in U$, $L_G(u)=L_G(v)$. Given two nonadjacent nonequivalent vertices $u,v \in V(G)$,  {\em symmetrizing} $u$ to $v$ refers to the operation of deleting all edges containing $u$ of $G$ and
adding all the edges $\{u\}\cup A, A\in L_G(v)$ to $G$.
We use the following algorithm from  \cite{BIJ}.
%, which was originated in  \cite{Pikhurko}.

\begin{algorithm}\label{al1}{\em(Symmetrization and cleaning with threshold $\alpha$ \cite{BIJ})}
\newline
\noindent{\bf Input:} {\em An $r$-graph $G$.}
\newline
\noindent{\bf Output:} {\em An $r$-graph $G^*$.}
\newline
\noindent{\bf Initiation:} {\em  Let $G_0=H_0=G.$ Set $i=0$.}
\newline
\noindent{\bf Iteration:}
{\em For each vertex $u$ in $H_i$, let $A_i(u)$ denote the equivalence class that $u$ is in. If either $H_i$ is empty or $H_i$ contains no two nonadjacent nonequivalent vertices, then let $G^*=H_i$ and terminate. Otherwise let $u,v$ be two nonadjacent nonequivalent vertices in $H_i$, where $d_{H_i}(u) \ge d_{H_i}(v)$. We symmetrize each vertex in $A_i(v)$ to $u$. Let $G_{i+1}$ denote the resulting graph.
If $G_{i+1}$ is $\alpha$-dense, then let $H_{i+1}=G_{i+1}$. Otherwise let $L=G_{i+1}$ and repeat the following: let $z$ be any vertex of minimum degree in $L$. Redefine $L=L-z$ unless in forming $G_{i+1}$ from $H_i$ we symmetrized the equivalence class of some vertex $v$ in $H_i$ to some vertex in the equivalence class of $z$ in $H_i$.
In that case, we redefine $L=L-v$ instead. Repeat the process until $L$ becomes either $\alpha$-dense or empty. Let $H_{i+1}=L$.
We call the process of forming $H_{i+1}$ from $G_{i+1}$ ``cleaning". Let $Z_{i+1}$ denote the set of vertices removed, so $H_{i+1}=G_{i+1}-Z_{i+1}$. By our definition, if $H_{i+1}$ is nonempty then it is $\alpha$-dense.
}
\end{algorithm}

\begin{theo}{\em (\cite{BIJ})}\label{theo6}
Let $m,r\ge 2$ be positive integers. Let $F$ be an $r$-graph that has at most $m$ vertices or has $m+1$ vertices one of which has degree $1$. There exists a real $\gamma_0=\gamma_0(m,r)>0$ such that for every positive real $\gamma <\gamma_0$, there exist a real $\delta > 0$ and an integer $n_0$ such that the following is true for all $n\ge n_0$. Let $G$ be an ${\mathcal{K}}_{m+1}^F$-free $r$-graph on $[n]$ with more than $({[m]_r \over m^r}-\delta){n \choose r} $ edges. Let $G^*$ be the final $r$-graph produced by Algorithm \ref{al1} with threshold ${[m]_r \over m^r}-\gamma$. Then $|V(G^*)|\ge (1-\gamma)n$ and $G^*$ is $({[m]_r \over m^r}-\gamma)$-dense. Furthermore, if there is a set $W \subseteq V(G^*)$ with $|W|\ge (1-\gamma_0)|V(G^*)|$ such that $W$ is the union of a collection of at most $m$ equivalence classes of $G^*$, then $G[W]$ is $m$-partite. \qed
\end{theo}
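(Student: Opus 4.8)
The plan is to analyse a run of Algorithm~\ref{al1} on $G$, extracting two preservation properties and a structural description of the output. Write $\alpha_0={[m]_r \over m^r}=r!\lambda(K_m^r)$ and let $\alpha=\alpha_0-\gamma$ be the threshold. The $\alpha$-density of $G^*$ will be automatic from the cleaning rule once we know $G^*\ne\emptyset$, so the substance is the lower bound on $|V(G^*)|$ and the $m$-partiteness. The first step is to show that both operations of Algorithm~\ref{al1} keep the graph ${\mathcal{K}}_{m+1}^F$-free: deleting vertices trivially does, and for a symmetrization of the equivalence class $A_i(v)$ onto $u$ one argues as in the clique-freeness part of Lemma~\ref{lemmaP_3} — an edge through a $w\in A_i(v)$ uses no vertex of $A_i(u)\cup A_i(v)$ other than $w$, so if the symmetrized graph carried a member of ${\mathcal{K}}_{m+1}^F$ on a witness set $C$, replacing each vertex of $A_i(v)\cap C$ by $u$ yields such a member already present beforehand, the case $u\in C$ with $A_i(v)\cap C\ne\emptyset$ being vacuous since then the pair $\{u,w\}$ cannot be covered after the operation. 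Hence $G^*$ is ${\mathcal{K}}_{m+1}^F$-free.

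Next I would describe $G^*$. Since $G^*$ has no two nonadjacent nonequivalent vertices, $V(G^*)$ partitions into equivalence classes $V_1,\dots,V_p$; every pair from distinct classes is covered, and each class is independent (an edge $\{x,y\}\cup B\in G^*$ with $x\ne y$ in $V_a$ would force $\{x\}\cup B\in L_{G^*}(y)=L_{G^*}(x)$ while containing $x$ — impossible). So $G^*$ is exactly the blowup, with parts $V_1,\dots,V_p$, of an $r$-graph $L^*$ on $[p]$ that covers every pair of $[p]$. If $p\ge m+1$ then $L^*$ is $F$-free: a copy of $F$ occupies at most $|V(F)|\le m+1$ vertices of $[p]$, which we pad to a set $C_0$ of exactly $m+1$ vertices, all of whose pairs are covered in $L^*$ with $L^*[C_0]\supseteq F$; picking one vertex per corresponding class produces a member of ${\mathcal{K}}_{m+1}^F$ in $G^*$, a contradiction. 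Using $\pi_{\lambda}(F)\le\alpha_0$ (which is what the hypothesis on $F$ is designed to guarantee, and which for $F\in\{P_3,P_4\}$ is Theorem~\ref{theoremP_3P_4}) when $p\ge m+1$, and $\lambda(K_p^r)\le\lambda(K_m^r)$ when $p\le m$, we get $r!\lambda(L^*)\le\alpha_0$ in all cases. Writing $N=|V(G^*)|$ and weighting each $V_a$ by $|V_a|/N$ then gives $|G^*|=\sum_{e\in L^*}\prod_{a\in e}|V_a|\le\lambda(L^*)\,N^r\le(\alpha_0+o(1)){N\choose r}$.

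For $|V(G^*)|\ge(1-\gamma)n$ I would run the standard edge count of this framework. A symmetrization changes the number of edges by $|A_i(v)|\,(d_{H_i}(u)-d_{H_i}(v))\ge0$, so it never decreases it; a cleaning step removes, at each single-vertex deletion from a graph on $s$ vertices, fewer than $\alpha{s-1\choose r-1}$ edges (the variant that deletes the just-symmetrized vertex instead is accounted for separately — this is exactly why the cleaning rule is stated as it is). With $q:=n-N$ deletions occurring in graphs on $n-1,\dots,n-q$ vertices, the hockey-stick identity bounds the total loss by $\alpha\big({n\choose r}-{N\choose r}\big)$, so $|G^*|\ge|G|-\alpha\big({n\choose r}-{N\choose r}\big)>(\alpha_0-\delta){n\choose r}-\alpha{n\choose r}+\alpha{N\choose r}$; comparing with $|G^*|\le(\alpha_0+o(1)){N\choose r}$ and using $\alpha_0-\alpha=\gamma$ yields ${N\choose r}>\big(1-{\delta\over\gamma}-o(1)\big){n\choose r}$, hence $N/n>\big(1-{\delta\over\gamma}-o(1)\big)^{1/r}$. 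Choosing $\delta$ small in terms of $\gamma$ (of order $\gamma^2$) and then $n$ large forces $N>(1-\gamma)n$; in particular $G^*\ne\emptyset$, so by the cleaning invariant $G^*$ is $\alpha$-dense.

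Finally, for the $m$-partiteness statement, let $W=V_{a_1}\cup\dots\cup V_{a_k}$ with $k\le m$ and $|W|\ge(1-\gamma_0)N$. Then $G^*[W]$ is the blowup of $L^*$ restricted to $\{a_1,\dots,a_k\}$, hence $k$-partite and so $m$-partite; the task is to transfer this to the \emph{original} graph, i.e. to show each $V_{a_i}$ is independent in $G[W]$. I would argue by contradiction: from an edge $e\in G$ with $e\subseteq W$ and $|e\cap V_{a_i}|\ge2$ one extends $e$ — greedily, using that $G$ is dense and that $W$ misses only a $\gamma_0$-fraction of the vertices — to a copy of a member of ${\mathcal{K}}_{m+1}^F$ \emph{inside $G$}, contradicting the ${\mathcal{K}}_{m+1}^F$-freeness of $G$. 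This transfer is the main obstacle: covering relations and copies of $F$ are abundant in the symmetrized graph $G^*$ but here must be realised by edges of $G$ itself, which is what forces the choice of $\gamma_0=\gamma_0(m,r)$ together with a supersaturation argument, and is the step where the degree-$1$ hypothesis on $F$ (in the case $|V(F)|=m+1$) is used.
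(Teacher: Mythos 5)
This theorem is quoted from \cite{BIJ} and carries no proof in the paper (it is stated with an immediate \qed), so there is no internal argument to compare yours against; judged on its own terms, your reconstruction of the first two conclusions is essentially the standard one and looks sound: symmetrization preserves ${\mathcal{K}}_{m+1}^F$-freeness (your replacement argument, including the observation that a core $C$ can meet $A_i(v)\cup\{u\}$ in at most one vertex, is correct), $G^*$ is a blowup of a pair-covering $r$-graph $L^*$ which is $F$-free when it has more than $m$ vertices, and the hockey-stick count with $\delta\sim\gamma^2$ gives $|V(G^*)|\ge(1-\gamma)n$. Two caveats there: you silently import $\pi_{\lambda}(F)\le [m]_r/m^r$, asserting the stated hypothesis on $F$ "is designed to guarantee" it --- it does not; in this paper that Lagrangian bound is a \emph{separate} hypothesis (see the definition of $m$-stable and Lemma \ref{stability}), though it is indeed indispensable for your upper bound $|G^*|\le(\tfrac{[m]_r}{m^r}+o(1))\binom{N}{r}$ and is satisfied in every application here. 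And you never verify that the exceptional cleaning step (removing the previously symmetrized vertex $v$ instead of the minimum-degree vertex $z$) still deletes fewer than $\alpha\binom{s-1}{r-1}$ edges; this is true because after symmetrizing onto $z$'s class the link of $v$ injects into the link of $z$ in the current graph, but it has to be said, and it is not "why the cleaning rule is stated as it is."

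The genuine gap is the \emph{Furthermore} clause, which you yourself call the main obstacle and then do not prove. Your plan --- from an edge of $G$ inside $W$ with two vertices in one equivalence class of $G^*$, extend "greedily, using that $G$ is dense" and "a supersaturation argument" to a member of ${\mathcal{K}}_{m+1}^F$ in $G$ --- does not get off the ground as stated: $G$ has only an edge-count hypothesis, no minimum-degree or density structure (it is $G^*$, not $G$, that is $(\tfrac{[m]_r}{m^r}-\gamma)$-dense), and a ${\mathcal{K}}_{m+1}^F$-free graph offers nothing to supersaturate. The whole difficulty is relating the \emph{original} links of the (possibly repeatedly symmetrized) vertices of $W$ to their links in $G^*$, and this is precisely what the special deletion rule in Algorithm \ref{al1} is engineered to control --- the rule you instead credited to the edge count. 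Your further guess that the degree-one condition on $F$ is what makes this transfer work is unsupported; in this paper that condition is visibly used elsewhere, namely in Lemma \ref{stability}, to embed $F$ into a copy of $K^r_m$ together with one additional covered vertex. As it stands, the conclusion that $G[W]$ (in the original graph) is $m$-partite is asserted with a plan rather than proved, so the proposal does not establish the theorem.
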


\begin{theo}{\rm (\cite{BIJ})}\label{BIJ-main}
Let $m,r\ge 2$ be positive integers. Let $F$ be an $r$-graph that either has at most $m$ vertices or has $m+1$ vertices one of which has degree $1$. Suppose either $\pi_{\lambda}(F)< {[m]_r \over m^r} $ or $\pi_{\lambda}(F)={[m]_r \over m^r} $ and ${\mathcal{K}}_{m+1}^F $ is $m$-stable. Then there exists a positive integer $n_2$ such that for all $n \ge n_2$ we have $ex(n,H_{m+1}^{F})= t_{m}^{r}(n)$ and the unique extremal $r$-graph is $T_{m}^{r}(n)$.
\end{theo}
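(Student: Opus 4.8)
We argue by the stability method. Throughout, $H_{m+1}^F$ denotes the extension $H^{F'}$ of the $r$-graph $F'$ obtained from $F$ by adjoining $m+1-|V(F)|$ isolated vertices, so that $H_{m+1}^F\in\mathcal{K}_{m+1}^F$ with core the $m+1$ vertices of $F'$. Since $T_m^r(n)$ is $m$-partite, any $m+1$ of its vertices contain two in a common part, which form a pair covered by no edge; hence $T_m^r(n)$ contains no member of $\mathcal{K}_{m+1}^F$, in particular no copy of $H_{m+1}^F$, so $ex(n,H_{m+1}^F)\ge t_m^r(n)$. A graph is $F'$-free if and only if it has at most $m$ vertices or is $F$-free, whence $\pi_\lambda(F')=\max\{r!\lambda(K_m^r),\pi_\lambda(F)\}=\frac{[m]_r}{m^r}$ by the hypothesis $\pi_\lambda(F)\le\frac{[m]_r}{m^r}$, and Proposition~\ref{relationlt}(ii) gives $\pi(H_{m+1}^F)=\frac{[m]_r}{m^r}$. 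Since $t_m^r(n)=(1+o(1))\frac{[m]_r}{m^r}\binom{n}{r}$, it suffices to show that for all large $n$, every $H_{m+1}^F$-free $G$ on $[n]$ with $|G|\ge t_m^r(n)$ is isomorphic to $T_m^r(n)$.

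The first step is to pass from $H_{m+1}^F$-freeness to $\mathcal{K}_{m+1}^F$-freeness. Set $D:=(m+1)+\binom{m+1}{2}(r-2)+1$ and call a pair of vertices \emph{light} if it lies in fewer than $D$ edges of $G$. Suppose some $(m+1)$-set $C$ spans a copy of $F$ in $G$ with every pair of $C$ covered by at least $D$ edges; extend this to an embedding of $F'$ by sending the isolated vertices to $C\setminus V(F)$, and then, for each of the at most $\binom{m+1}{2}$ pairs of $C$ left uncovered by $F'$, greedily pick an edge through it whose remaining $r-2$ vertices avoid $C$ and all previously chosen vertices --- possible since $D$ exceeds the total number of vertices used. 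The result is a copy of $H_{m+1}^F$ in $G$, a contradiction, so $G$ has no such $C$. Let $G_1$ be $G$ with all edges meeting a light pair deleted; at most $D\binom{n}{2}=o(n^r)$ edges are removed when $r\ge3$ (for $r=2$, $H_{m+1}^F=K_{m+1}$ and the claim is Tur\'an's theorem). Every pair covered in $G_1$ is covered by at least $D$ edges of $G$, so by the argument above $G_1$ is $\mathcal{K}_{m+1}^F$-free, while $|G_1|\ge t_m^r(n)-o(n^r)$.

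Apply Algorithm~\ref{al1} to $G_1$ with threshold $\frac{[m]_r}{m^r}-\gamma$ for small $\gamma<\gamma_0(m,r)$; by Theorem~\ref{theo6}, and because the algorithm's operations preserve $\mathcal{K}_{m+1}^F$-freeness, the output $G^*$ is $\mathcal{K}_{m+1}^F$-free, $\big(\frac{[m]_r}{m^r}-\gamma\big)$-dense, and on at least $(1-\gamma)n$ vertices, so $|G^*|>\big(\frac{[m]_r}{m^r}-2\gamma\big)\binom{|V(G^*)|}{r}$. If $\pi_\lambda(F)=\frac{[m]_r}{m^r}$ we invoke the hypothesised $m$-stability of $\mathcal{K}_{m+1}^F$; if $\pi_\lambda(F)<\frac{[m]_r}{m^r}$ the same conclusion holds by a direct argument using that an $F$-free graph has Lagrangian bounded away from $\lambda(K_m^r)$. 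Either way $G^*$ becomes $m$-partite after deleting at most $\varepsilon|V(G^*)|$ vertices, and collapsing vertices with equal link yields a set $W\subseteq V(G^*)$ that is a union of at most $m$ equivalence classes of $G^*$ with $|W|\ge(1-\gamma_0)|V(G^*)|$; the ``furthermore'' clause of Theorem~\ref{theo6} gives that $G^*[W]$ is $m$-partite with parts those classes. Exploiting that $G^*$ has large minimum degree --- so it covers pairs and, on $W$, is a near-complete $m$-partite $r$-graph --- one shows in turn that $W=V(G^*)$, that there are exactly $m$ equivalence classes, and hence that $G^*$ is $m$-partite; since the algorithm deletes at most $\gamma n$ vertices and never decreases the edge count, $G$ itself admits a partition $V(G)=V_1\cup\dots\cup V_m$ in which all but $o(n^r)$ edges are transversal. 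Finally, if $G$ had a transversal-violating edge $e$, say with two vertices $x,y$ in a common part $V_i$, then, using that $F'$ has a vertex of degree at most $1$, one embeds $F'$ in $G$ with its unique possibly-nontrivial edge mapped onto $e$ (so that two vertices of $F'$ are mapped to $x,y$) and the remaining vertices mapped transversally into other parts, choosing the images so that every pair of the image is covered --- the transversal pairs by the near-complete transversal structure and the pair $\{x,y\}$ by $e$ itself. Greedily adjoining external vertices as before then produces a copy of $H_{m+1}^F$ in $G$, a contradiction. Hence $G$ is $m$-partite, so $|G|\le t_m^r(n)$ with equality only for the balanced partition; as $|G|\ge t_m^r(n)$, we conclude $G\cong T_m^r(n)$.

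The technical heart, and the step I expect to be the main obstacle, is the passage in the last paragraph from the approximate structure ($G$ being $o(n^r)$-close to an $m$-partite graph) to the exact one. One must simultaneously control the exceptional vertices left by the stability and symmetrization machinery, the transversal-violating edges, and the balance of the parts, and show that any deviation can be parlayed --- through the greedy embedding of $H_{m+1}^F$ --- into a forbidden copy; ruling out near-extremal configurations other than balanced $K_m^r$-blowups is where essentially all the work lies. It is precisely here, in the equality case $\pi_\lambda(F)=\frac{[m]_r}{m^r}$, that the $m$-stability of $\mathcal{K}_{m+1}^F$ is used in full strength: it supplies the approximate $m$-partiteness on which the local analysis rests, and without it the conclusion genuinely fails.
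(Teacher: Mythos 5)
This statement is not proved in the paper at all: it is quoted verbatim from Brandt--Irwin--Jiang \cite{BIJ} (with \cite{NY2} obtaining similar results), and the paper only uses it as a black box inside Lemma \ref{stability}. So the only meaningful comparison is with the original proof in \cite{BIJ}, whose overall strategy (symmetrization via Algorithm \ref{al1}, stability for the family ${\mathcal{K}}_{m+1}^F$, then an exact step) your sketch does follow. But as a proof your proposal has two genuine gaps. First, the cleaning step with the constant threshold $D=(m+1)+\binom{m+1}{2}(r-2)+1$ is wrong for $r\ge 4$: having $D$ edges through a pair does not let you greedily pick one whose extra $r-2$ vertices avoid a fixed set of forbidden vertices, because all of those edges may share a single forbidden vertex (edges through a pair are only determined by their third vertex when $r=3$). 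The standard fix is to call a pair light when it is covered fewer than $\varepsilon n^{r-2}$ times (a threshold growing with $n$), after which the greedy extension works because at most $O(n^{r-3})$ edges through the pair meet any bounded forbidden set; the edge-loss accounting then changes from $O(n^2)$ to $O(\varepsilon n^{r})$ and must be balanced against $\delta\binom{n}{r}$.

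Second, and more seriously, the passage from the approximate structure to the exact statement --- that $|G|\le t_m^r(n)$ with equality only for $T_m^r(n)$ --- is not actually carried out: the last paragraph consists of ``one shows in turn that $W=V(G^*)$ \dots'' and an embedding sketch, and you yourself flag this step as the expected main obstacle. In \cite{BIJ} this is the bulk of the work: one must transfer the $m$-partite structure of the symmetrized graph $G^*$ back to $G$ through all symmetrization and cleaning steps (controlling the deleted sets $Z_i$ and using that symmetrization never decreases the edge count together with an exact bound for ${\mathcal{K}}_{m+1}^F$-free graphs), establish a minimum-degree property of a maximum $H_{m+1}^F$-free graph so that every covered pair of the would-be core is covered $\Omega(n^{r-2})$ times in $G$ itself (your embedding of $F'$ onto a transversal-violating edge needs this, not merely the ``near-complete transversal structure''), and then handle balancedness. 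As written, the proposal establishes the lower bound and the asymptotic density correctly, and correctly reduces the stability input to the hypotheses, but the exact and uniqueness claims --- which are the content of the theorem --- remain unproven.
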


The following proposition follows immediately from the definition and is implicit in many papers (see  \cite{Keevash} for instance).
\begin{prop} \label{lag-bound}
Let $r\geq 2$. Let $L$ be an $r$-graph and $G$ be a blowup of $L$. Suppose $|V(G)|=n$. Then $|G|\leq \lambda(L) n^r$. \qed
\end{prop}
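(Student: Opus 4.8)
The plan is to exhibit an explicit feasible weight vector on $L$ whose Lagrangian value equals exactly $|G|/n^r$; since $\lambda(L)$ is by definition the maximum of $\lambda(L,\vec{y})$ over $\vec{y}\in\Delta_t$, the desired inequality follows immediately.

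Concretely, write $L$ on the vertex set $[t]$ (so $t=|V(L)|$) and let $V_1,\dots,V_t$ be the parts of the blowup $G$, so that $V(G)=V_1\cup\cdots\cup V_t$ with the $V_i$ pairwise disjoint, $\sum_{i=1}^t |V_i|=n$, and $E(G)=\bigcup_{e\in L}\prod_{i\in e}V_i$. First I would record that distinct edges $e,e'$ of $L$ are distinct $r$-subsets of $[t]$, hence the ``boxes'' $\prod_{i\in e}V_i$ are pairwise disjoint, so $|G|=\sum_{e\in L}\prod_{i\in e}|V_i|$ (for the inequality alone the bound $|G|\le\sum_{e\in L}\prod_{i\in e}|V_i|$ would already suffice). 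Next, define $\vec{x}=(x_1,\dots,x_t)$ by $x_i=|V_i|/n$. Then $x_i\ge 0$ and $\sum_{i=1}^t x_i=1$, so $\vec{x}\in\Delta_t$ is a feasible weight vector on $L$, and a direct computation gives
$$\lambda(L,\vec{x})=\sum_{e\in L}\prod_{i\in e}x_i=\sum_{e\in L}\prod_{i\in e}\frac{|V_i|}{n}=\frac{1}{n^r}\sum_{e\in L}\prod_{i\in e}|V_i|=\frac{|G|}{n^r}.$$
Since $\lambda(L)=\max_{\vec{y}\in\Delta_t}\lambda(L,\vec{y})\ge\lambda(L,\vec{x})=|G|/n^r$, we conclude $|G|\le\lambda(L)n^r$.

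There is no real obstacle here: the statement is essentially a reformulation of the definition of the Lagrangian, and the only point requiring a word of care is the bookkeeping that the blowup edge set is the \emph{disjoint} union of the boxes over edges of $L$. One may optionally remark that equality holds whenever $n$ is chosen so that some optimum weight vector of $L$ has coordinates that are integer multiples of $1/n$ (take $|V_i|=n\,y_i$), but this is not needed for the stated inequality.
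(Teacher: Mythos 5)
Your proof is correct and is exactly the canonical argument the paper has in mind: the paper states this proposition without proof, noting it "follows immediately from the definition," and the intended justification is precisely your observation that the weight vector $x_i=|V_i|/n$ is feasible on $L$ and satisfies $\lambda(L,\vec{x})=|G|/n^r$. Your extra remark that the boxes $\prod_{i\in e}V_i$ are pairwise disjoint (so that $|G|=\sum_{e\in L}\prod_{i\in e}|V_i|$) is a correct and harmless piece of bookkeeping, though only the inequality is needed.
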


Theorem \ref{P_t-lagrangian} follows from the following Lemma, which is implicit in  \cite{BIJ} and  \cite{NY2}.
\begin{lemma}\label{stability}
Let $m,r\ge 2$ be positive integers. Let $F$ be an $r$-graph that has at most $m$ vertices or has $m+1$ vertices of which $r-1$ vertices of an edge has degree $1$ and $\pi_{\lambda}(F)\le {[m]_r \over m^r} $.
Suppose there is a constant $c>0$ such that
for every $F$-free and $K_{m}^r$-free $r$-graph $L$, $\lambda(L)\le \lambda(K_{m}^{r})-\epsilon$ holds. Then
${\mathcal{K}}_{m+1}^F $ is $m$-stable, consequently
$ex(n,H_{m+1}^{F})= t_{m}^{r}(n)$ and the unique extremal $r$-graph is $T_{m}^{r}(n)$.
\end{lemma}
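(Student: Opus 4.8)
The plan is to separate the two conclusions. The Tur\'an-number statement is immediate from Theorem~\ref{BIJ-main}: since $\pi_\lambda(F)\le [m]_r/m^r$, either $\pi_\lambda(F)<[m]_r/m^r$, in which case Theorem~\ref{BIJ-main} applies at once, or $\pi_\lambda(F)=[m]_r/m^r$ (this is forced when $|V(F)|=m+1$, since then $\pi_\lambda(F)\ge r!\lambda(K_m^r)=[m]_r/m^r$), in which case Theorem~\ref{BIJ-main} applies once $\mathcal K_{m+1}^F$ is known to be $m$-stable. So the whole content is the $m$-stability of $\mathcal K_{m+1}^F$, and here Theorem~\ref{theo6} does the heavy lifting. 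Given $\varepsilon>0$, I would fix $\gamma_0=\gamma_0(m,r)$ from Theorem~\ref{theo6} and the gap $\epsilon$ from the hypothesis, choose a constant $\gamma$ smaller than $\gamma_0$, than a suitable multiple of $\epsilon$, and than a suitable multiple of $\varepsilon$, obtain the corresponding $\delta$ and $n_0$ from Theorem~\ref{theo6}, and set $n_1$ to be $n_0$ plus whatever extra lower bound the asymptotics below require. Then for any $\mathcal K_{m+1}^F$-free $G$ on $[n]$, $n\ge n_1$, with more than $([m]_r/m^r-\delta)\binom nr$ edges, I run Algorithm~\ref{al1} with threshold $[m]_r/m^r-\gamma$ to get $G^*$, which by Theorem~\ref{theo6} satisfies $|V(G^*)|\ge(1-\gamma)n$, is $([m]_r/m^r-\gamma)$-dense, and is still $\mathcal K_{m+1}^F$-free (symmetrization only duplicates links and cleaning only deletes vertices, so no new copy of a member of $\mathcal K_{m+1}^F$ appears, as set up in \cite{BIJ}).

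The core claim is that $G^*$ has at most $m$ equivalence classes. I would pass to the reduced $r$-graph $R$ whose vertices are the equivalence classes of $G^*$ and whose edges are the $r$-sets of classes admitting a transversal that is an edge of $G^*$. Since Algorithm~\ref{al1} has terminated, any two non-equivalent vertices of $G^*$ lie in a common edge, while two equivalent vertices never do (if $u$ and $v$ have the same link and some edge contains both, deleting $u$ from it gives a set of $L(u)=L(v)$ whose union with $v$ is itself, impossible); hence every edge of $G^*$ is a transversal of $r$ distinct classes, $G^*$ is exactly the blow-up of $R$ with parts the classes, and $R$ covers pairs. By Proposition~\ref{lag-bound} applied with the weight vector giving each class its proportional size, together with the $([m]_r/m^r-\gamma)$-density of $G^*$ and the identity $r!\,\lambda(K_m^r)=[m]_r/m^r$ coming from Fact~\ref{K_t^r}, one gets $\lambda(R)\ge\lambda(K_m^r)-C\gamma$ for an absolute constant $C=C(m,r)$ and $n$ large. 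Also $R$ is $K_{m+1}^r$-free, since a clique on $m+1$ classes would, on picking one vertex per class, give $m+1$ vertices of $G^*$ spanning $K_{m+1}^r\supseteq F$ with every pair covered, i.e.\ a member of $\mathcal K_{m+1}^F$ inside $G^*$. Now suppose $|V(R)|\ge m+1$. If $R$ contained a copy of $F$, I could enlarge its vertex set (it has at most $m+1$ vertices, and $R$ covers pairs) to exactly $m+1$ classes and take a transversal, again producing a $\mathcal K_{m+1}^F$-member in $G^*$; so $R$ is $F$-free. If $R$ were not $K_m^r$-free, fix a copy of $K_m^r$ in $R$ and one further vertex $w$; since $R$ covers pairs there is an edge at $w$, and the assumption that $F$ has $r-1$ degree-one vertices forming part of an edge (so that $F$ minus those vertices has at most $m-r+2\le m$ vertices and embeds in $K_m^r$) lets me embed $F$ into this clique-plus-edge, contradicting $F$-freeness — when $|V(F)|\le m$ this is trivial since then $F\subseteq K_m^r\subseteq R$. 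Hence $R$ is both $F$-free and $K_m^r$-free, so the hypothesis gives $\lambda(R)\le\lambda(K_m^r)-\epsilon$, contradicting $\lambda(R)\ge\lambda(K_m^r)-C\gamma$ once $C\gamma<\epsilon$. Therefore $G^*$ has at most $m$ equivalence classes.

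The rest is standard. Taking $W=V(G^*)$ (the union of these at most $m$ classes, trivially of size at least $(1-\gamma_0)|V(G^*)|$) in the ``furthermore'' clause of Theorem~\ref{theo6} shows that $G^*$ is $m$-partite. Since symmetrization never decreases the number of edges and cleaning removes at most $\gamma n$ vertices and hence $O(\gamma n^r)$ edges, $G^*$ is an $m$-partite $r$-graph on at most $n$ vertices with at least $|G|-O(\gamma n^r)\ge([m]_r/m^r-\delta-O(\gamma))\binom nr$ edges, so it is near-edge-extremal among $m$-partite graphs on $(1-o(1))n$ vertices; the now-standard unwinding of the symmetrizations as in \cite{Pikhurko,BIJ} then transfers this structure back, showing that $G$ becomes $m$-partite after deleting $O(\sqrt{\gamma}\,n)$ vertices, which is at most $\varepsilon n$ by the choice of $\gamma$. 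This gives $m$-stability, and Theorem~\ref{BIJ-main} then yields $ex(n,H_{m+1}^F)=t_m^r(n)$ with unique extremal graph $T_m^r(n)$ for large $n$.

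The hard part will be the step $|V(R)|\le m$: one must check carefully that $G^*$ is genuinely the blow-up of a pair-covering hypergraph $R$ — which is what makes Proposition~\ref{lag-bound} and the Lagrangian hypothesis both applicable to $R$ — and then dispose of the case $K_m^r\subseteq R$ with $|V(R)|\ge m+1$, where the structural assumption that $F$ has $r-1$ degree-one vertices on an edge is exactly the hypothesis one needs in order to embed $F$. The passage from $G^*$ back to $G$ is routine modulo the bookkeeping already carried out in \cite{BIJ} and \cite{NY2}, which is why the lemma is only ``implicit'' there.
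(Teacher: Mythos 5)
Your proof follows the same route as the paper: run Algorithm~\ref{al1}, pass to the reduced graph $R$ (the paper's $G^*[S]$), show that if $R$ had at least $m+1$ vertices it would be $F$-free and $K_m^r$-free, derive a contradiction with the Lagrangian hypothesis via Proposition~\ref{lag-bound}, conclude $G^*$ has at most $m$ equivalence classes, and invoke the ``Furthermore'' clause of Theorem~\ref{theo6}. Phrasing the contradiction as an upper and lower bound on $\lambda(R)$ rather than on $|G^*|$ is equivalent, and the passing remark that $R$ is $K_{m+1}^r$-free is unused. In the $K_m^r$-freeness step you should be a little more careful to say that pair-covering gives an edge through $w$ \emph{and} a vertex of the clique (so that the single non--degree-one vertex of the special edge of $F$ lands inside the clique), exactly as the paper arranges with its $u_1,u_2$.

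The one real deviation is at the end: you read the ``Furthermore'' clause of Theorem~\ref{theo6} as yielding only that $G^*$ is $m$-partite and then append a hand-waved ``unwinding of symmetrizations'' to transfer $m$-partiteness back to $G$ (with an unexplained $O(\sqrt{\gamma}\,n)$ deletion bound). But Theorem~\ref{theo6} already states that $G[W]$, i.e.\ the \emph{original} graph restricted to $W$, is $m$-partite; since $W=V(G^*)$ has size at least $(1-\gamma)n$, this gives $m$-partiteness of $G$ after deleting at most $\gamma n<\varepsilon n$ vertices immediately, with no unwinding needed and a better bound. The unwinding paragraph should be deleted; as written it is not a gap in correctness (the conclusion still holds) but it is unjustified and superfluous.
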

\begin{proof} Let $\varepsilon >0$ be given. Let $\delta, n_0$ be the constants guaranteed by Theorem \ref{theo6}. We can assume that $\delta$ is small enough and $n_0$ is large enough. Let $\gamma >0$ satisfy $\gamma <\varepsilon$ and $ \delta+r\gamma < \epsilon$. Let $G$ be a ${\mathcal{K}}_{m+1}^{F}$-free $r$-graph on $n>n_0$ vertices with more than $({[m]_r \over m^r}-\delta){n \choose r} $ edges. Let $G^*$ be the final $r$-graph produced by applying Algorithm \ref{al1} to $G$ with threshold
${[m]_r \over m^r}-\gamma$. By Algorithm \ref{al1}, if $S$ consists of one vertex from each equivalence class of $G^*$, then $G^*[S]$ covers pairs and $G^*$ is a blowup of $G^*[S]$.

First, suppose that $|S| \ge m+1$. If $F \subseteq G^*[S]$, then since $G^*[S]$ covers pairs we can find a member of ${\mathcal{K}}_{m+1}^{F}$ in $G^*[S]$ by using any $(m+1)$-set that contains a copy of $F$ as the core, contradicting $G^*$ being ${\mathcal{K}}_{m+1}^{F}$-free.
So $G^*[S]$ is $F$-free.  We claim that $G^*[S]$ is $K^r_m$-free. Otherwise suppose $G^*[S]$ contains a copy of $K^r_m$.
When $|V(F)|=m$, $K^r_m$ contains a copy of $F$ clearly. So suppose that $|V(F)|=m+1$ and $F$ has $r-1$ vertices of one edge of degree $1$.
Let $e=\{v_1,\dots,v_r\}\in F$ with $d_F(v_1)=\dots=d_F(v_{r-1})=1$. Let $u_1\in S\setminus V(K^r_m)$ since $|S|\geq m+1$, and let $u_2\in V(K^r_m)$, since $G^*[S]$ covers pairs, there is an edge containing $\{u_1,u_2\}$ in $G^*[S]$, denote as $\{u_1,\dots,u_r\}$.
Assume that $V(F)=\{v_1,\dots,v_{m+1}\}$.
We define an injective function $f$ from $V(F)$ to $S$ with $f(v_i)=u_i$ for every $i\in [m+1]$, where $u_{r+1},\dots,u_{m+1}$ are arbitrary $m+1-r$ vertices in $V(K^r_m)\setminus \{u_2,\dots,u_r\}$.
It's clear that $f$ preserves edges and hence $G^*[S]$ contains a copy of $F$, a contradiction.
Thus, by our assumption, $\lambda(G^*[S])\leq \frac{1}{r!} \frac{[m]_r}{m^r}-\epsilon$. By Proposition \ref{lag-bound}, we have
\begin{equation} \label{G^*-upper}
|G^*|\leq \lambda(G^*[S])n^r\leq \left(\frac{1}{r!}\frac{[m]_r}{m^r}-\epsilon\right)n^r< \left(\frac{[m]_r}{m^r}-\epsilon\right)\frac{n^r}{r!}.
\end{equation}
Now, during the process of obtaining $G^*$ from $G$, symmetrization never decreases the number of edges.
Since at most $\gamma n$ vertices are deleted in the process (see Theorem \ref{theo6}),
$$|G^*|>|G|-\gamma n {n-1 \choose r-1} \ge \left({[m]_r \over m^r}-\delta-r\gamma \right){n \choose r} > \left({[m]_r \over m^r}-\epsilon\right){n^r \over r!},$$
contradicting \eqref{G^*-upper}. So $|S|\le m$. Hence, $W=V(G^*)$ is the union of at most $m$ equivalence classes of $G^*$. By Theorem \ref{theo6},
$|W|\geq (1-\gamma) n$ and $G[W]$ is $m$-partite. Hence, $G$ can be made $m$-partite by deleting at most $\gamma n<\varepsilon n$ vertices. Thus,
$\mathcal{K}_{m+1}^F$ is $m$-stable. Then the result holds by Theorem \ref{BIJ-main}.
\end{proof}

%Similarly, applying Lemma \ref{p_2} and the above approach, we can obtain the following result, which was first proved by Frankl and F\"uredi in  \cite{FF-F5} (for $n\ge 3000$), Keevash and Mubayi in  \cite{KD} impoved for $n\ge 33$ (more see Keevash's survey  \cite{Keevash}).While, Sidorenko determined the asymptotic value in \cite {Sidorenko-89}.

%\begin{theo}\label{P_2-lagrangian} {\rm (\cite {FF-F5})}
%$ex(n,F_5)= t_{3}^{3}(n)$ for sufficiently large $n$. Moreover, if $n$ is sufficiently large and $G$ is an $F_5$-free $3$-graph on $[n]$ with %$|G|=t_{3}^{3}(n)$, then $G\cong T_{3}^{3}(n)$.
%\end{theo}

%\begin{theo}\label{P_2-lagrangian}
%Then $ex(n,H_{5}^{P_2})= t_{4}^{3}(n)$ for sufficiently large $n$. Moreover, if $n$ is sufficiently large and $G$ is an $H_{5}^{P_2}$-free $3$-graph on $[n]$ %with $|G|=t_{4}^{3}(n)$, then $G=T_{4}^{3}(n)$.
%\end{theo}

\section{Remark}

If one could show that left-compressing a dense $P_t$-free $3$-graph will result in  a $P_t$-free $3$-graph, then it would  not be  hard to show that $P_t$ is perfect. It seems to be hard  for general $t$.  All known cases regarding Lagrangian densities are listed in Section 1 (to the best of our knowledge). For $3$-uniform graphs spanned by 3  edges,
there  is still one remaining unsolved cases:  $K_4^{3-}$ (\{123, 124, 134\}). Since  the Lagrangian density and the Tur\'an density of $K_4^{3-}$ equal, it would be very interesting in determining the Tur\'an density of $K_4^{3-}$  by its Lagrangian density.

\bigskip

{\bf Acknowledgement}
We thank Tao Jiang for helpful discussions.


\begin{thebibliography}{JluR00}

\bibitem{NWY}  A. Bene Watts, S. Norin and L. Yepremyan,  A Tur\'an theorem for extensions via an Erd\H {o}s-Ko-Rado theorem for Lagrangians, arXiv:1707.01533, July 2017.

\bibitem{BIJ}
A. Brandt, D. Irwin and T. Jiang, Stability and Tur\'an numbers of a class of hypergraphs via
Lagrangians,  \emph{Combin., Probab. \& Comput.}, \textbf{26 (3)} (2015), 367-405.

\bibitem{CLP}
P. Chen, J. Liang and Y. Peng, The Lagrangian density of the disjoint union of a 3-uniform tight path and a matching and the Tur\'an number of its extension, preprint.


\bibitem{FF}
P. Frankl and Z. F\"uredi, Extremal problems whose solutions are the
blow-ups of the small Witt-designs, {\it J. Combin. Theory Ser. A.}
{\bf 52} (1989), 129--147.


\bibitem{FR}
P. Frankl and V. R\"odl, Hypergraphs do not jump, {\it Combinatorica}
{\bf 4} (1984), 149--159.

\bibitem{GLM2018}V. Gruslys, S. Letzter, N. Morrison, Hypergraph Lagrangians: Resolving the Frankl-F\"uredi
conjecture, arXiv:1807.00793v1, preprint, JulY 2018.

\bibitem{HK}
D. Hefetz and P. Keevash, A hypergraph Tur\'an theorem via Lagrangians of intersecting
families, {\it J. Combin. Theory Ser. A} {\bf 120} (2013), 2020--2038.

\bibitem{HPW}
S. Hu, Y. Peng and B. Wu, Lagrangian densities of linear forests and Tur\'an numbers of their extensions, preprint.

\bibitem{Jenssen} M. Jenssen, Continous Optimisation in Extremal Combinatorics, Ph.D. dissertation, London School of Economics and Political Science, 2017.

\bibitem{JPW}
T. Jiang, Y. Peng and B. Wu, Lagrangian densities of some sparse hypergraphs and Tur\'an numbers of their extensions, {\it European Journal of Combinatorics} {\bf 73} (2018), 20--36.

\bibitem{Keevash} P. Keevash, Hypergrah Tur\'an problems, {\it Surveys in Combinatorics}, Cambridge
University Press, (2011), 83--140.

%\bibitem{KD} P. Keevash and D. Mubayi, Stability theorems for cancellative hypergraphs, {\it Journal of Combinatorial Theory, Series B} {\bf 92(1)} (2004), 163--175.

\bibitem{LLP2018}H. Lei, L. Lu, and Y. Peng, On Lagrangians of 3-uniform hypergraphs,
arXiv:1806.10846v1, preprint, June 2018.




%\bibitem{mubayi} D. Mubayi, A hypergraph extension of Tur\'an's theorem, \emph{J. Combin. Theory Ser.B}\textbf{96} (2006), 122--134.

%\bibitem{MP} D. Mubayi and O. Pikhurko, A new generalization of Mantel's theorem to $k$-graphs, \emph{J. Combin. Theory Ser. B} \textbf{97} (2007), 669--678.



\bibitem{MS} T. S. Motzkin and E.G. Straus, Maxima for graphs and a new proof of a theorem of Tur\'an, \emph{Canad. J. Math} {\bf 17} (1965), 533--540.

\bibitem{NY} S. Norin, L. Yepremyan, Tur\'an number of generalized triangles, {\it J. Combin. Theory Ser. A.} {\bf 146} (2017), 312-343.

\bibitem{NY2} S. Norin, L. Yepremyan, Tur\'an numbers of extensions, {\it Journal of Combinatorial Theory} {\bf 155} (2018), 476-492.

\bibitem{PZ} Y. Peng and C. Zhao, A Motzkin-Straus type result for $3$-uniform
hypergraphs, {\it Graphs Combin.} {\bf 29} (2013), 681--694.

\bibitem{Pikhurko} O. Pikhurko, An exact Tur\'an result for the generalized triangle, \emph{Combinatorica} {\bf 28} (2008), 187--208.


\bibitem{Sidorenko-87} A. F. Sidorenko, On the maximal number of edges in a homogeneous hypergraph that does not contain prohibited subgraphs,
\emph{Mat. Zametki} \textbf{41} (1987), 433-455.

\bibitem{Sidorenko-89} A. F. Sidorenko, Asymptotic solution for a new class of forbidden $r$-graphs, \emph{Combinatorica} {\bf 9} (1989), 207--215.

\bibitem{T} J. Talbot, Lagrangians of hypergraphs, \emph{Combin., Probab. \& Comput.}, {\bf 11} (2002), 199--216.

\bibitem{TPZZ2} Q. S. Tang, Y. Peng, X. D. Zhang, and C. Zhao, Connection between the clique number and the Lagrangian of $3$-uniform hypergraphs, \emph{Optimization Letters}, 10.4 (2016), 685-697.

\bibitem{Tyo} M. Tyomkyn, Lagrangians of hypergraphs: The Frankl-F\"uredi conjecture holds almost everywhere, \emph{J. London Math. Soc.} {\bf 96} (2017), 584-600.

\bibitem{WPC} B. Wu, Y. Peng and P. Chen, On a conjecture of Hefetz and Keevash on  Lagrangians of intersecting hypergraphs and Tur\'an numbers, arXiv:1701.06126, January 2017.

\bibitem{yanpeng} Z. Yan and Y. Peng, Lagrangian densities of hypergraph cycles, arXiv:1810.13077, preprint, Ocotober 2018.

\bibitem{Z} A. A. Zykov, On some properties of linear complexes, \emph{Mat. Sbornik, (N. S.)} (\textbf{24}) (66) (1949), 163--188.
\end{thebibliography}
\end{document}